\DeclareMathAlphabet{\mathpzc}{OT1}{pzc}{m}{it}
\theoremstyle{plain}
\newtheorem{Thm}{Theorem}[section]
\newtheorem{Prop}[Thm]{Proposition}
\newtheorem{Lem}[Thm]{Lemma}
\newtheorem{Coro}[Thm]{Corollary}
\theoremstyle{definition}
\numberwithin{equation}{section}
\newcommand{\ddbfHa}{\dot{\boldsymbol{\mathfrak D}_\vtg}(n)}
\newcommand{\ddbfHasl}{\dot{\boldsymbol{\mathfrak D}'_\vtg}(n)}
\newcommand{\etam}{\eta_m}
\newcommand{\etak}{\eta_k}
\newcommand{\etakt}{\eta_{2k}}
\def\su#1{^{#1}}
\newcommand{\Kbfj}{K^{\bfj}}
\newcommand{\tA}{{}^t\!A}
\newcommand{\tAm}{{}^t\!(A^-)}
\newcommand{\afUslp}{U(\widehat{\frak{sl}}_n)^+_\sZ}
\newcommand{\afUslNp}{U(\widehat{\frak{sl}}_N)^+_\sZ}
\newcommand{\afbfUslNp}{{\bf U}(\widehat{\frak{sl}}_N)^+}
\newcommand{\bin}{\bigcup}
\newcommand{\han}{\subseteq}
\newcommand{\bsS}{{\boldsymbol{\sS}}}
\newcommand{\lan}{\langle}
\newcommand{\ran}{\rangle}
\newcommand{\leb}{\left[}
\newcommand{\rib}{\right]}
\def\lr#1{\langle #1\rangle}
\def\ggp#1#2{\left[\kern-3.2pt\left[{#1\atop #2}\right]\kern-3.2pt\right]}
\newcommand{\Boleq}{\leq^{Bo}}
\newcommand{\Bol}{<^{Bo}}
\def\fS{{\frak S}}
\def\fb{{\frak b}}
\def\fc{{\frak c}}
\newcommand{\g}{{\mathsf g}}
\newcommand{\sff}{{\mathsf f}}
\newcommand{\sfh}{{\mathsf h}}
\newcommand{\msL}{\mathscr L}
\newcommand{\msR}{\mathscr R}
\newcommand{\msD}{\mathscr D}\newcommand{\afmsD}{{\mathscr D}^\vtg}
\newcommand{\affSr}{{\fS_{\vtg,r}}}
\newcommand{\Wr}{W_r}
\newcommand{\afHr}{{\sH_\vtg(r)_\sZ}}
\def\sH{{\mathcal H}}
\def\sO{{\mathcal O}}
\def\sS{{\mathcal S}}
\def\sT{{\mathcal T}}
\def\sX{{\mathcal X}}
\def\sZ{{\mathcal Z}}
\newcommand{\vtg}{{\!\vartriangle\!}}
\newcommand{\dbfHa}{{\boldsymbol{\mathfrak D}_\vtg}(n)}
\newcommand{\dbfHasl}{{\boldsymbol{\mathfrak D}'_\vtg}(n)}
\newcommand{\dHa}{{{\mathfrak D}_\vtg}(n)_\sZ}
\newcommand{\dHapz}{{{\mathfrak D}_\vtg}(n)^{\geq 0}_\sZ}
\newcommand{\dHap}{{{\mathfrak D}_\vtg}(n)^+_\sZ}
\newcommand{\dHaz}{{{\mathfrak D}_\vtg}(n)^0_\sZ}
\def\field{{\mathbb F}}
\def\deg{{\rm deg}}
\newcommand{\mnmod}{\!\!\!\mod\!}
\newcommand{\tri}{\triangle(n)}
\newcommand{\afsl}{\widehat{\frak{sl}}_n}
\newcommand{\afgl}{\widehat{\frak{gl}}_n}
\newcommand{\afFn}{{\mathscr F_\vtg}}
\newcommand{\afE}{E^\vartriangle}
\newcommand{\afSr}{{\mathcal S}_{\vtg}(n,r)_\sZ}
\newcommand{\afSrpz}{{\mathcal S}_{\vtg}(n,r)^{\geq 0}_\sZ}
\newcommand{\afbfSr}{{\boldsymbol{\mathcal S}}_\vtg(n,r)}
\newcommand{\afbfSNr}{{\boldsymbol{\mathcal S}}_\vtg(N,r)}
\newcommand{\afbfSrn}{{\boldsymbol{\mathcal S}}_\vtg(n,r+n)}
\newcommand{\afal}{{\boldsymbol\alpha}^\vartriangle}
\newcommand{\afbse}{\boldsymbol e^\vartriangle}
\newcommand{\afmbnn}{\mathbb N_\vtg^{n}}
\newcommand{\afmbzn}{\mathbb Z_\vtg^{n}}
\newcommand{\afLa}{\Lambda_\vtg}
\newcommand{\afLanr}{\Lambda_\vtg(n,r)}
\newcommand{\afThn}{\Theta_\vtg(n)}
\newcommand{\aftiThn}{\ti\Theta_\vtg(n)}
\newcommand{\afThnpm}{\Theta_\vtg^\pm(n)}
\newcommand{\afThnp}{\Theta_\vtg^+(n)}
\newcommand{\afThNp}{\Theta_\vtg^+(N)}
\newcommand{\afThnr}{\Theta_\vtg(n,r)}
\newcommand{\afThNr}{\Theta_\vtg(N,r)}
\newcommand{\afTh}{\Theta_\vtg}
\newcommand{\dzr}{\dot{\zeta}_r}
\newcommand{\dz}{\dot{\zeta}}
\newcommand{\dxr}{\dot{\xi}_r}
\newcommand{\dx}{\dot{\xi}}
\newcommand{\tu}{\widetilde u}
\def\leq{\leqslant}\def\geq{\geqslant}
\def\ge{\geqslant}
\newcommand{\iy}{\infty}
\newcommand{\Th}{\Theta}
\newcommand{\dt}{\delta}
\newcommand{\lm}{\longmapsto}
\newcommand{\vi}{\varphi}
\newcommand{\up}{v}
 \newcommand{\ep}{\varepsilon}
 \newcommand{\al}{\alpha}
 \newcommand{\bt}{\beta}
 \newcommand{\h}{\widehat}
 \newcommand{\ti}{\widetilde}
\newcommand{\zr}{\zeta_r}
\newcommand{\sg}{\sigma}
\def\th{\theta}
\newcommand{\p}{\prec}
\newcommand{\pr}{\preccurlyeq}
\newcommand{\bop}{\bigoplus}
\newcommand{\ot}{\otimes}
\newcommand{\bfl}{\mathbf{0}}
\newcommand{\Ar}{{A,r}}
\newcommand{\ol}{\overline}
\newcommand{\lra}{\longrightarrow}
\newcommand{\ra}{\rightarrow}
 \newcommand{\la}{{\lambda}}
 \newcommand{\La}{\Lambda}
 \newcommand{\mbn}{\mathbb N}
 \newcommand{\mbq}{\mathbb Q}
 \newcommand{\mbz}{\mathbb Z}
  \newcommand{\bfd}{{\mathbf{d}}}
 \newcommand{\bfj}{{\mathbf{j}}}
\newcommand{\bfa}{{\boldsymbol{a}}}
\newcommand{\bfb}{{\boldsymbol{b}}}
\newcommand{\bfU}{{\mathbf{U}}}
\newcommand{\bfL}{{\mathbf{L}}}
\newcommand{\ga}{{\gamma}}
\newcommand{\bfB}{\mathbf{B}}
\newcommand{\bfBn}{\mathbf{B}(n)}
\newcommand{\bfBN}{\mathbf{B}(N)}
\newcommand{\bfBNap}{\mathbf{B}(N)^{\mathrm{ap}}}
\newcommand{\dbfBn}{\dot{\mathbf{B}}(n)}
\newcommand{\bfBr}{\mathbf{B}(n,r)}
\newcommand{\End}{\operatorname{End}}
\newcommand{\Hom}{\operatorname{Hom}}
\newcommand{\spann}{\operatorname{span}}
\newcommand{\diag}{\operatorname{diag}}
\def\ro{\text{\rm ro}}
\def\co{\text{\rm co}}
\def\afsygr{{\fS_{\vtg,r}}}
\newcommand{\dbfU}{\dot{{\bfU}}(\h{\frak{sl}}_n)}
\newcommand{\afThnap}{\Theta_\vtg(n)^{\rm ap}}
\newcommand{\afThnpap}{\Theta_\vtg^+(n)^{\mathrm{ap}}}
\newcommand{\afThNpap}{\Theta_\vtg^+(N)^{\mathrm{ap}}}
\newcommand{\afThnrap}{\Theta_\vtg(n,r)^{\rm ap}}
\newcommand{\afThnrzap}{\Theta_\vtg(n,r_0)^{\rm ap}}
\newcommand{\afThnrnap}{\Theta_\vtg(n,r+n)^{\rm ap}}
\newcommand{\afThNrap}{\Theta_\vtg(N,r)^{\rm ap}}
\begin{document}
\title{Positivity properties for canonical bases of modified quantum affine $\frak{sl}_n$}

\author{Qiang Fu$^\dagger$}
\address{Department of Mathematics, Tongji University, Shanghai, 200092, China.}
\email{q.fu@hotmail.com, q.fu@tongji.edu.cn}
\author{Toshiaki Shoji}
\address{Department of Mathematics, Tongji University, Shanghai, 200092, China.}
\email{shoji@tongji.edu.cn, shoji@math.nagoya-u.ac.jp}

\thanks{$^\dagger$ Supported by the National Natural Science Foundation
of China, Fok Ying Tung Education Foundation.}

\begin{abstract}
The positivity property for canonical bases asserts that the structure constants of
the multiplication for the canonical basis are in $\mbn[\up,\up^{-1}]$.
Let $\bfU$ be the quantum group over $\mbq(\up)$ associated with a symmetric Cartan datum.
The positivity property for the positive part $\bfU^+$ of $\bfU$ was proved by Lusztig.
He conjectured that the positivity property holds
for the modified form $\dot\bfU$ of $\bfU$.
In this paper, we prove that the structure constants for the canonical basis of  $\dbfU$
coincide with certain structure constants for the canonical basis  of $\afbfUslNp$ for $n<N$.
In particular, the positivity property for $\dbfU$ follows from the positivity property for $\afbfUslNp$.
\end{abstract}
 \sloppy \maketitle
\section{Introduction}

Let $\bfU$ be the quantum group over $\mbq(\up)$ associated with a Cartan datum $(I,\cdot)$, where $\up$ is an indeterminate. It is known by Lusztig and Kashiwara that the positive part $\bfU^+$ of a quantum enveloping algebra $\bfU$ has a canonical basis with remarkable properties (see Kashiwara \cite{Kas1}, Lusztig \cite{Lu90b,Lu91,Lubk}). Among them, the deepest one should be
the positivity property for the canonical basis of $\bfU^+$ proved by Lusztig \cite{Lu90b,Lu91}, \cite[14.4.13]{Lubk}, which asserts that the structure constants of the multiplication for the canonical basis of $\bfU^+$ are in $\mbn[\up,\up^{-1}]$  in the case where the
Cartan datum $(I,\cdot)$ is symmetric.

Let $\dot\bfU$ be the modified form of $\bfU$. The algebra $\dot\bfU$ is an associative algebra without unity and the category of $\bfU$-modules of type $1$ is equivalent to the category of unital $\dot\bfU$-modules. The canonical basis $\dot\bfB$ of $\dot\bfU$ was constructed by Lusztig \cite{Lu92b,Lubk}. In \cite[Section 11]{Lu92b} and \cite[25.4.2]{Lubk},  he conjectured that the structure constants of the multiplication for $\dot\bfB$ are in $\mbn[\up,\up^{-1}]$, i.e., the positivity property holds for  $\dot\bfU$,
in the case where the Cartan datum $(I,\cdot)$ is symmetric.

Let $\afbfSr$ be the affine quantum Schur algebra over $\mbq(\up)$ (see \cite{GV}, \cite{Gr99} and \cite{Lu99}). An explicit algebra homomorphism $\zeta_r$ from  $\bfU(\afsl)$ to $\afbfSr$ was constructed by Ginzburg--Vasserot \cite{GV}, Lusztig \cite{Lu99}.
According to \cite[8.2]{Lu99} the map $\zeta_r:\bfU(\afsl)\ra\afbfSr$ is not surjective in the case where $n\leq r$. In turn, it is proved by Deng--Du--Fu \cite[3.8.1]{DDF} that the map $\zeta_r$ can be extended to a surjective algebra homomorphism  from $\bfU(\afgl)$ to $\afbfSr$, where $\bfU(\afgl)$ is the quantum loop algebra of $\afgl$.
On the other hand, the quantum Schur algebra
$\bsS(n,r)$ is  known to be a quotient of the quantum  algebra $\bfU(\frak{sl}_n)$. The canonical basis of $\bsS(n,r)$ was defined by Beilinson--Lusztig--MacPherson \cite{BLM} and  the positivity property for the canonical basis of $\bsS(n,r)$ was proved by Green in \cite{Gr97}.
The canonical basis $\bfBr$ of the affine quantum Schur algebra $\afbfSr$ was defined in \cite{Lu99}. Lusztig gave in \cite[4.5]{Lu99} a sketch of the proof of the positivity property for $\bfBr$ based on the property of Kazhdan--Lusztig basis of affine Hecke algebras of type $A$.

In this paper, we show that there exist good relations among canonical bases of the three  algebras
$\dbfU$, $\afbfSr$ and $\afbfUslNp$.
In Theorem \ref{prop of canonical basis for affine q-Schur algebras} we prove that the structure constants for $\bfBr$ are determined by the structure constants for the canonical basis $\bfBNap$ of $\afbfUslNp$ for $n<N$. Then the positivity property for $\bfBr$ follows from the positivity property for $\bfBNap$. This gives an alternate approach for the positivity property of $\bfBr$. Using Theorem \ref{prop of canonical basis for affine q-Schur algebras},
we prove in Theorem \ref{relation dbfBn bfBNap} that the
structure constants for the canonical basis $\dbfBn$ of $\dbfU$ are  determined by the structure constants for the canonical basis $\bfBNap$ of
$\afbfUslNp$ for $n<N$. Thus the positivity property for $\dbfBn$ follows from the positivity property for $\bfBNap$.
We also discuss in Theorem \ref{positive modified affine gln} a certain weak positivity property for $\ddbfHa$, where $\ddbfHa$ is the modified quantum affine $\frak{gl}_n$.

{\bf Notation:} For a positive integer $n$, let
$\afThn$  (resp., $\aftiThn$) be the set of all matrices
$A=(a_{i,j})_{i,j\in\mbz}$ with $a_{i,j}\in\mbn$ (resp. $a_{i,j}\in\mbz$, $a_{i,j}\geq0$ for all $i\neq j$)  such that
\begin{itemize}
\item[(a)]$a_{i,j}=a_{i+n,j+n}$ for $i,j\in\mbz$; \item[(b)] for
every $i\in\mbz$, both sets $\{j\in\mbz\mid a_{i,j}\not=0\}$ and
$\{j\in\mbz\mid a_{j,i}\not=0\}$ are finite.
\end{itemize}
Let $\afThnp=\{A\in\afThn\mid a_{i,j}=0\text{ for }i\geq j\}.$
For $r\geq 0$,
let $\afThnr=\{A\in\afThn\mid\sg(A)=r\},$
where $\sg(A)=\sum_{1\leq i\leq n,\,
j\in\mbz}a_{i,j}.$ For $i,j\in\mbz$ let $\afE_{i,j}\in\afThn$ be the matrix
$(e^{i,j}_{k,l})_{k,l\in\mbz}$ defined by
\begin{equation*}e_{k,l}^{i,j}=
\begin{cases}1&\text{if $k=i+sn,l=j+sn$ for some $s\in\mbz$,}\\
0&\text{otherwise}.\end{cases}
\end{equation*}

Let $\afmbzn=\{(\la_i)_{i\in\mbz}\mid
\la_i\in\mbz,\,\la_i=\la_{i-n}\ \text{for}\ i\in\mbz\}$ and $\afmbnn=\{(\la_i)_{i\in\mbz}\in \afmbzn\mid \la_i\ge0\text{ for  }i\in\mbz\}.$
$\afmbzn$ has a natural structure of abelian group. For $r\geq 0$  let
$\afLanr=\{\la\in\afmbnn\mid\sg(\la)=r\},$
where $\sg(\la)=\sum_{1\leq i\leq n}\la_i$.

Let $\sZ=\mbz[\up,\up^{-1}]$, where $\up$ is an indeterminate.

\section{Preliminaries}
\subsection{}
Let $\tri$ ($n\geq 2$) be
the cyclic quiver
with vertex set $I=\mbz/n\mbz$ and arrow set
$\{i\to i+1\mid i\in I\}$. We identify $I$ with $\{1,2,\cdots,n\}$.
Let $\field$ be a field. For $i\in I$ and $j\in\mbz$ with $i<j$, let $S_i$
denote the one-dimensional representation of $\tri$ with $(S_i)_i=\field$ and $(S_i)_k=0$ for $i\neq k$ and
$M^{i,j}$ the unique indecomposable nilpotent representation of length $j-i$ with top $S_i$.

For $A\in\afThnp$ let $\bfd(A)\in\mbn I$ be the dimension vector of $M(A)$,
where
$$M(A)=M_\field(A)=\bop_{1\leq i\leq n\atop i<j,\,j\in\mbz}a_{i,j}M^{i,j}.$$
We will identify naturally $\mbn I$ with $\afmbnn$.
The Euler form associated with the cyclic quiver $\tri$ is the
bilinear form $\lan-,-\ran$: $\afmbzn\times\afmbzn\ra\mbz$ defined by
$\lan\la,\mu\ran=\sum_{1\leq i\leq n}\la_i\mu_i-\sum_{1\leq i\leq n}\la_i\mu_{i+1}$
for $\la,\mu\in\afmbzn$.

By Ringel \cite{Ri93}, for $A,B,C\in\afThnp$,
there is a polynomial $\vi^{C}_{A,B}\in\mbz[\up^2]$  such
that, for any finite field $\field_q$,
$\vi^{C}_{A,B}|_{\up^2=q}$ is equal to the number of submodules $N$ of
$M_{\field_q}(C)$ satisfying $N\cong M_{\field_q}(B)$ and $M_{\field_q}(C)/N\cong M_{\field_q}(A)$.

Let $\dbfHa$ be the double Ringel--Hall algebra of the cyclic quiver  $\tri$ introduced in \cite[(2.1.3.2)]{DDF} (see also \cite{X97}). It was proved in \cite[2.5.3]{DDF} that $\dbfHa$ is isomorphic to the quantum loop algebra $\bfU(\afgl)$. According to \cite[2.6.1, 2.6.3(5) and 3.9.2]{DDF} we have the following result.
\begin{Lem} \label{presentation-dbfHa}
The algebra $\dbfHa$ is the algebra over $\mbq(\up)$ generated by
$u_A^+$, $K_{i}^{\pm 1}$, $u_A^-$ $(A\in\afThnp,\,i\in I)$ subject to
the following relations:
\begin{itemize}
\item[(1)]
$K_iK_j=K_jK_i$, $K_iK_i^{-1}=1$, $u_0^+=u_0^-=1$;
\item[(2)]
$K\su{\bfj} u_A^+=\up^{\lr{\bfd(A),\bfj}}u_A^+K\su\bfj$,
$u_A^-K\su\bfj=\up^{\lr{\bfd(A),\bfj}}K\su\bfj u_A^-$, where
$K\su\bfj=K_1^{j_1}\cdots K_n^{j_n}$ for $\bfj\in\afmbzn$;
\item[(3)]
$u_A^+u_B^+=\sum_{C\in\afThnp}\up^{\lan \bfd(A),\bfd(B)\ran}\vi_{A,B}^C u_C^+$;
\item[(4)]
$u_A^-u_B^-=\sum_{C\in\afThnp}\up^{\lan \bfd(B),\bfd(A)\ran}\vi_{B,A}^C u_C^-$;
\item[(5)] {\rm commutator relations}:  for all $\la,\mu\in\afmbnn$,
\begin{equation*}
\aligned
\up^{\lan\mu,\mu\ran}&\sum_{\al,\bt\in\afmbnn\atop\la-\al=\mu-\bt\geq 0}\vi_{\la,\mu}^{\al,\bt}
\up^{\lan \bt,\la+\mu-\bt\ran}\ti K\su{\mu-\bt}u_{A_\bt}^-u_{A_\al}^+
=\up^{\lan\mu,\la\ran}\sum_{\al,\bt\in\afmbnn\atop\la-\al=\mu-\bt\geq 0}
{\vi_{\la,\mu}^{\al,\bt}}\up^{\lan \mu-\bt,\al\ran+\lan \mu,\bt\ran}
\ti K\su{\bt-\mu}u_{A_\al}^+u_{A_\bt}^-,\endaligned
\end{equation*}
\end{itemize}
where $\ti K\su\nu :=(\ti K_1)^{\nu_1}\cdots(\ti K_n)^{\nu_n}$ with $\ti K_i=K_iK_{i+1}^{-1}$ for $\nu\in\afmbzn$, and
\begin{equation*}
\vi_{\la,\mu}^{\al,\bt}=\up^{2\sum_{1\leq i\leq n}(\la_i-\al_i)(1-\al_i-\bt_i)}\prod_{1\leq i\leq n\atop 0\leq s\leq\la_i-\al_i-1}\frac{1}{\up^{2(\la_i-\al_i)}-\up^{2s}}.
\end{equation*}
\end{Lem}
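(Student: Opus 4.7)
The plan is to realize $\dbfHa$ as the Drinfeld double of the extended twisted Ringel--Hall algebra of $\tri$ together with its opposite, and then read off each of the relations (1)--(5) from this realization. Relations (1)--(4) are essentially built into the construction, so the substantive content of the lemma is relation (5).

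First, I would assemble the positive half. Ringel's twisted Hall algebra $\mathfrak H^+$ has basis $\{u_A^+\mid A\in\afThnp\}$ with multiplication
\[u_A^+u_B^+=\sum_{C\in\afThnp}\up^{\lan\bfd(A),\bfd(B)\ran}\vi^C_{A,B}\,u_C^+,\]
which is precisely (3); the mirrored construction on the opposite category yields $\mathfrak H^-$ and relation (4). Extending each half by a torus $\mbq(\up)[K_1^{\pm 1},\ldots,K_n^{\pm 1}]$ with the $K_i$'s commuting (yielding (1)) and acting on $u_A^\pm$ by $K^{\bfj}u_A^\pm=\up^{\pm\lan\bfd(A),\bfj\ran}u_A^\pm K^{\bfj}$ (yielding (2)) produces bialgebras $\mathfrak H^{\geq 0}$ and $\mathfrak H^{\leq 0}$.

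Next I would equip the pair $(\mathfrak H^{\geq 0},\mathfrak H^{\leq 0})$ with the standard non-degenerate skew-Hopf pairing, normalized so that $\langle u_A^+, u_B^-\rangle=\delta_{A,B}/a_A$ up to an explicit power of $\up$, where $a_A=|\Aut M(A)|$, and so that $\langle K_i,K_j\rangle$ is an exponentiated Euler form. By the Drinfeld double recipe (cf.\ \cite[2.6.1]{DDF}), the double $D(\mathfrak H^{\geq 0},\mathfrak H^{\leq 0})$ is by definition $\dbfHa$, and its cross-commutation rule is forced by the universal identity
\[\sum a_{(2)}b_{(2)}\,\langle a_{(1)},b_{(1)}\rangle=\sum b_{(1)}a_{(1)}\,\langle a_{(2)},b_{(2)}\rangle.\]
To derive (5), I would specialize to $a=u_{A_\la}^+$ and $b=u_{A_\mu}^-$, where $A_\la$ denotes the semisimple matrix of dimension $\la\in\afmbnn$. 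The coproduct of a semisimple element factorizes as a sum over ordered pairs $(\al,\bt)\in\afmbnn\times\afmbnn$ with $\al+\bt=\la$, weighted by Gaussian binomials counting semisimple subobjects; after substitution, the universal identity collapses to a sum indexed by $(\al,\bt)$ with $\la-\al=\mu-\bt$. The Euler-form twists from the first step reappear as the prefactors $\up^{\lan\mu,\mu\ran}$ and $\up^{\lan\mu,\la\ran}$ on the two sides, while the $\ti K$-exponents $\mu-\bt$ and $\bt-\mu$ arise from the Cartan elements produced by the pairing on the cross-side.

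The main obstacle is extracting the precise form of $\vi_{\la,\mu}^{\al,\bt}$: the prefactor $\up^{2\sum_i(\la_i-\al_i)(1-\al_i-\bt_i)}$ together with the automorphism product $\prod_{i,s}(\up^{2(\la_i-\al_i)}-\up^{2s})^{-1}$ must be read off from the semisimple coproduct and the explicit Hopf pairing, with consistent normalizations for the two Euler-form twists. Rather than reproduce this combinatorial bookkeeping, I would invoke the explicit computation of \cite[3.9.2]{DDF} together with the general commutator formula of \cite[2.6.3(5)]{DDF}, which between them deliver (5) in exactly the form stated.
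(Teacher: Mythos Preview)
Your proposal is correct and matches the paper's approach: the paper does not give an independent proof of this lemma but simply cites \cite[2.6.1, 2.6.3(5) and 3.9.2]{DDF}, exactly the references you invoke at the end. Your sketch of the Drinfeld double construction and the specialization to semisimple elements is a faithful unpacking of what lies behind those citations, so there is no substantive difference between your route and the paper's.
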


Note that the set $\{u_A^+K^{\bfj}u_B^-\mid A,B\in\afThnp,\,\bfj\in\afmbzn\}$ forms a $\mbq(\up)$-basis of $\dbfHa$.
\subsection{}
We now recall the definition of affine quantum Schur algebras following   \cite{Lu99}.
Let $\field$ be a field and fix an $\field[\ep,\ep^{-1}]$-free
module $V$ of rank $r\in\mbn$, where $\ep$ is an indeterminate. A
lattice in $V$ is, by definition, a free $\field[\ep]$-submodule
$L$ of $V$ satisfying
$V=L\otimes_{\field[\ep]}\field[\ep,\ep^{-1}]$.
Let $\afFn={\mathscr F}_{\vtg,n}$ be the set of all  filtrations  $\bfL=(L_i)_{i\in\mbz}$ of lattices, where each $L_i$ is a lattice in $V$ such that
$L_{i-1}\han L_i$ and $L_{i-n}=\ep L_i$, for all $i\in\mbz$. The
group $G$ of automorphisms of the $\field[\ep,\ep^{-1}]$-module $V$
acts on $\afFn$ by $g\cdot\bfL=(g(L_i))_{i\in\mbz}$ for $g\in G$ and
$\bfL\in\afFn$.  The group $G$ acts on $\afFn\times\afFn$ by
$g\cdot(\bfL,\bfL')=(g\cdot\bfL,g\cdot\bfL')$.

Recall the set $\afThnr$ given in \S 1.
By \cite[1.5]{Lu99}
there is a bijection between the set of $G$-orbits in
$\afFn\times\afFn$ and $\afThnr$ by sending $(\bfL,\bfL')$
to $A=(a_{i,j})_{ij\in\mbz}$, where
$a_{i,j}=\dim_\field {L_i\cap L_j'}/({L_{i-1}\cap L_j'+L_i\cap L_{j-1}'}).$
Let $\sO_A\han\afFn\times\afFn$ be
the $G$-orbit corresponding to the matrix $A\in\afThnr$.

Let $\field=\field_q$ be the finite field of $q$
elements. For $A,A',A''\in\afThnr$ and $(\bfL,\bfL'')\in\sO_{A''}$ let
$\nu_{A,A',A'';q}=\#\{\bfL'\in\afFn  \mid (\bfL,\bfL')
\in\sO_{A},(\bfL',\bfL'')\in\sO_{A'}\}$.
By \cite[1.8]{Lu99}, there exists a polynomial
$\nu_{A,A',A''}\in\sZ$ in $\up^2$ such that
$\nu_{A,A',A''}|_{\up^2=q}=\nu_{A,A',A'';q}$
for any $q$, a power of a prime number.

Let $\afSr$ be the the free $\sZ$-module with
basis $\{e_{A}\mid A\in\afThnr\}$. According to \cite[1.9]{Lu99} there is a unique associative $\sZ$-algebra structure on $\afSr$ with multiplication
$e_{A}e_{{A'}}=\sum_{A''\in\afThnr}\nu_{A,A',A''}e_{{A''}}$.
Let $\afbfSr=\afSr\ot\mbq(\up)$. The algebras $\afSr$ and $\afbfSr$ are called affine quantum Schur algebras.

For $A\in\afThnr$ let
\begin{equation}\label{dA}
[A]=\up^{-d_A}e_A,\quad\text{ where}\quad d_{A}=\sum_{1\leq i\leq n,\, i\geq k,j<l}a_{i,j}a_{k,l}
\end{equation}
According to \cite[1.11]{Lu99}, the $\sZ$-linear map
\begin{equation}\label{taur}
\tau_r:\afSr\lra\afSr,\;\;[A]\lm [\tA]
\end{equation} is an algebra
anti-involution, where $\tA$ is the transpose of $A$.

\subsection{}
Let $\afsygr$ be the group consisting of all permutations
$w:\mbz\ra\mbz$ such that $w(i+r)=w(i)+r$ for $i\in\mbz$.
The extended affine Hecke algebra $\afHr$ of affine type $A$ over $\sZ$  is the (unital) $\sZ$-algebra with basis
$\{T_w\}_{w\in\affSr}$, and multiplication defined by
\begin{equation*}
\begin{cases} T_{s_i}^2=(\up^2-1)T_{s_i}+\up^2,\quad&\text{for }1\leq i\leq r\\
T_{w}T_{w'}=T_{ww'},\quad&\text{if}\
\ell(ww')=\ell(w)+\ell(w'),
\end{cases}
\end{equation*}
where $s_i\in\affSr$ is defined by
setting
$s_i(j)=j$ for $j\not\equiv i,i+1\mnmod r$, $s_i(j)=j-1$ for
$j\equiv i+1\mnmod r$ and $s_i(j)=j+1$ for $j\equiv i\mnmod r$, and $\ell(w)$ is the length of $w$.

Recall the set $\afLanr$ given in \S 1. Let $\fS_r$ be the subgroup of $\affSr$ generated by $s_i$ for $1\leq i\leq r-1$, which is isomorphic to the symmetric group of degree $r$.
For $\la\in\afLanr$, let $\fS_\la:=\fS_{(\la_1,\ldots,\la_n)}$
be the corresponding standard Young subgroup of $\fS_r$ and let
$x_\la=\sum_{w\in\fS_\la}T_w\in\afHr$.
For $\la,\mu\in\afLanr$, let
$\afmsD_\la=\{d\mid d\in\affSr,\ell(wd)=\ell(w)+\ell(d)\text{ for
$w\in\fS_\la$}\}$ and $\afmsD_{\la,\mu}=\afmsD_{\la}\cap{\afmsD_{\mu}}^{-1}$.
For $\la,\mu\in\afLanr$ and $d\in\afmsD_{\la,\mu}$
define
$\phi_{\la,\mu}^d\in\End_{\afHr}\bigl
(\bop_{\la\in\La_\vtg(n,r)}x_\la\afHr\bigr)$ by
\begin{equation*}
\phi_{\la,\mu}^d(x_\nu h)=\dt_{\mu\nu}\sum_{w\in\fS_\la
d\fS_\mu}T_wh
\end{equation*}
for $\nu\in\afLanr$ and $h\in\afHr$.

For $\la\in\afLanr$,
$1\leq i\leq n$ and $k\in\mbz$ let
\begin{equation}\label{Rtla}
R_{i+kn}^{\la}=\{\la_{k,i-1}+1,\la_{k,i-1}+2,\ldots,\la_{k,i-1}+\la_i
=\la_{k,i}\},
\end{equation}
where $\la_{k,i-1}=kr+\sum_{1\leq t\leq i-1}\la_t.$
By Varagnolo--Vasserot \cite[7.4]{VV99} (see also \cite[9.2]{DF10}), there is
a bijective map
\begin{equation}\label{jmath}
{\jmath_\vtg}:\{(\la, d,\mu)\mid
d\in\afmsD_{\la,\mu},\la,\mu\in\afLanr\}\lra\afThnr
 \end{equation}
sending $(\la, d,\mu)$ to the matrix $A=(|R_k^\la\cap dR_l^\mu|)_{k,l\in\mbz}$.
Varagnolo--Vasserot showed in \cite{VV99} that there is an algebra isomorphism $${\mathfrak h}:\End_{\afHr}\biggl
(\bop_{\la\in\La_\vtg(n,r)}x_\la\afHr\biggr)\ra\afSr$$
such that ${\mathfrak h}(\phi_{\la,\mu}^d)=e_A$, where $A=\jmath_\vtg(\la,d,\mu)$. We identify $\End_{\afHr}\bigl
(\bop_{\la\in\La_\vtg(n,r)}x_\la\afHr\bigr)$ with $\afSr$ via ${\mathfrak h}$.

\subsection{}
It was shown in \cite{DDF} that the double Ringel--Hall algebra $\dbfHa$ and the affine quantum Schur algebra $\afbfSr$ are related by a surjective algebra homomorphism $\zr$.
Let $\afThnpm:=\{A\in\afThn\mid a_{i,j}=0\text{ for }i= j\}$.
For $A\in\afThnpm$ and $\bfj\in\afmbzn$, define $A(\bfj,r)\in
\afbfSr$ by
\begin{equation*}
A(\bfj,r)=\begin{cases}
\sum_{\la\in\La_\vtg(n,r-\sg(A))}\up^{\la\cdot\bfj}[A+\diag(\la)],&\text{ if }\sg(A)\leq r;\\
0,&\text{ otherwise,}\end{cases}
\end{equation*}
where $\la\cdot\bfj=\sum_{1\leq i\leq n}\la_ij_i$.
For $A\in\afThnp$ let
$$\ti u_A^\pm=\up^{\dim \End(M(A))-\dim M(A)}u_A^\pm.$$
We have the following result.
\begin{Thm}[{\cite[3.6.3, 3.8.1]{DDF}}]\label{zr}
For $r\geq 0$, the linear map $\zr:\dbfHa\ra \afbfSr$ satisfying
$$\zr(K^\bfj)=0(\bfj,r),\;\zr(\ti u_A^+)=A(\bfl,r),\;\;\text{and}\;\;
\zr(\ti u_A^-)=(\tA)(\bfl,r),$$
for all $\bfj\in \afmbzn$ and $A\in \afThnp$, is a surjective algebra homomorphism.
\end{Thm}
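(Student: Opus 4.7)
The plan is to verify that $\zr$ respects each of the defining relations (1)--(5) of $\dbfHa$ from Lemma \ref{presentation-dbfHa}, and then to establish surjectivity separately. Relations (1) are immediate once one observes that $0(\bfl,r)$ is the identity of $\afbfSr$ and the toric elements $0(\bfj,r)$ mutually commute. Relations (2) reduce to the identity $[\diag(\mu)][A+\diag(\la)]=\dt_{\mu,\,\ro(A)+\la}[A+\diag(\la)]$ in $\afbfSr$ together with the computation $(\ro(A+\diag(\la))-\co(A+\diag(\la)))\cdot\bfj=\lr{\bfd(A),\bfj}$, which produces the required scalar when $0(\bfj,r)$ is commuted past $A(\bfl,r)$.

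Relations (3) and (4) require a multiplication formula for $A(\bfl,r)\cdot B(\bfl,r)$ in $\afbfSr$. Working through the identification of $\afbfSr$ with $\End_{\afHr}\bigl(\bigoplus_\la x_\la\afHr\bigr)$ given in the previous subsection, one shows that the coefficient of $[C+\diag(\nu)]$ in $A(\bfl,r) B(\bfl,r)$ equals, up to the twist $\up^{\lan\bfd(A),\bfd(B)\ran}$ together with the normalization $\up^{d_A+d_B-d_C}$ from \eqref{dA}, the Ringel--Hall polynomial $\vi^C_{A,B}$. Over each finite field $\field_q$, this matches the enumeration of submodule filtrations of $M_{\field_q}(C)$ with composition factors $M_{\field_q}(B)$ and $M_{\field_q}(A)$ with the geometric count underlying $\nu_{A,B,C;q}$, so the two sides agree.

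The commutator relation (5) will be the main obstacle. The strategy is to reduce, by bilinearity and induction on the supports of $\la$ and $\mu$, to the case where the semisimple matrices $A_\la$ and $A_\mu$ are supported at a single vertex, and then to compare $\zr(\ti u_{A_\la}^+\ti u_{A_\mu}^-)$ with $\zr(\ti u_{A_\mu}^-\ti u_{A_\la}^+)$ directly in $\afbfSr$ using a Green-type formula for Hall numbers in the cyclic quiver (as adapted to the double Ringel--Hall setting in \cite{X97}). The weighted sums of Hall numbers on either side of (5), together with the shifts $\ti K^{\mu-\bt}$ that record the reordering of flags, then emerge naturally from the bookkeeping on both sides.

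For surjectivity, the image of $\zr$ contains every $0(\bfj,r)$ and every $A(\bfl,r)$ with $A\in\afThnp$, and hence also every $(\tA)(\bfl,r)$. A triangularity argument with respect to a partial order on $\afThnr$ refining the Bruhat-type order used in BLM then shows that each basis element $[C]\in\afbfSr$ is recoverable from a suitable product of such generators, with corrections supported on strictly smaller matrices, which yields surjectivity and completes the proof.
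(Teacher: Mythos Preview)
The paper does not prove this theorem; it is quoted verbatim from \cite[3.6.3, 3.8.1]{DDF} and used as a black box. There is therefore no ``paper's own proof'' to compare against, and in the context of this paper no proof is required beyond the citation.

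That said, your sketch is plausible in outline but is not a proof. The verification of relations (1) and (2) is fine. For (3) and (4) you assert that the structure constants of the $A(\bfl,r)$ in $\afbfSr$ match the twisted Hall numbers $\up^{\lan\bfd(A),\bfd(B)\ran}\vi^C_{A,B}$, but you do not actually carry this out; in \cite{DDF} this is a nontrivial computation relying on explicit multiplication formulas in $\afbfSr$ (going back to \cite{Lu99} and \cite{VV99}) rather than a direct Hecke-algebra argument. For (5) your proposed reduction to semisimple pieces supported at a single vertex does not work as stated: relation (5) is a family of identities indexed by pairs $(\la,\mu)\in\afmbnn\times\afmbnn$ and already couples all vertices through the sums over $\al,\bt$, so one cannot simply decouple the vertices. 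In \cite{DDF} the commutator relations are handled not by a direct single-vertex reduction but via the Drinfeld double construction and compatibility with the comultiplication. Your surjectivity paragraph is also only a gesture; the actual argument in \cite[3.8.1]{DDF} uses the triangular relation between the elements $A(\bfj,r)$ and the basis $[A]$ together with an induction on $\pr$, which is close in spirit to what you say but requires the specific multiplication formulas established earlier.

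In short: for the purposes of this paper you should simply cite \cite[3.6.3, 3.8.1]{DDF}; if you want an independent proof, the steps for (3)--(5) need substantially more detail than you have given.
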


\section{Canonical bases for affine quantum Schur algebras}

\subsection{}
Let $W_r$ be the subgroup of $\afsygr$ generated by $s_i$ for $1\leq i\leq r$.
For $i,j\in\mbz$ such that $i\not\equiv j\mnmod r$, define $(i,j)\in\affSr$  by
setting
$(i,j)(k)=k$ for $k\not\equiv i,j\mnmod r$, $(i,j)(k)=j+k-i$ for
$k\equiv i\mnmod r$ and $(i,j)(k)=i+k-j$ for $k\equiv j\mnmod r$.
Note that $(i,j)\in W_r$ for all $i,j$.
By definition we have $(i,j)=(i+tr,j+tr)$ for $t\in\mbz$ and $(i,i+1)=s_{i}$.
Let $$T=
\bin_{w\in\Wr,1\leq i\leq r}ws_iw^{-1}
=\{(i,j)\in\Wr|1\leq i\leq r,\,i,j\in\mbz,\ i<j,\ i\not\equiv j\mnmod r\}.$$
For $y,w\in\Wr$, we write $y\leq w$ if there exist $t_i\in T$ ($1\leq i\leq m$) for some $m\in\mbn$ such that $w=t_1t_2\cdots t_my$ and $\ell(t_it_{i+1}\cdots t_my)>\ell(t_{i+1}t_{i+2}\cdots t_my)$ for $1\leq i\leq m$. The partial ordering $\leq$ on $\Wr$ is called the Bruhat order.
Let $\rho$ be the permutation of $\mbz$ sending $j$ to $j+1$ for all $j\in\mbz$. Then we have $\affSr=\langle\rho\rangle\ltimes W_r$, where
$\lan\rho\ran\cong\mbz$ is the subgroup of $\affSr$ generated by $\rho$.
The Bruhat order on $\Wr$ can be extended to $\affSr$ by define $\rho^iy\leq\rho^jw$ (for $y,w\in\Wr$) if and only if $i=j$ and $y\leq w$.

Let $\bar\ :\afHr\ra\afHr$ be the ring involution
defined by $\bar v=v^{-1}$ and $\bar T_w=T_{w^{-1}}^{-1}$.
Let $\sH(W_r)$ be the $\sZ$-subalgebra of $\afHr$ generated by $T_{s_i}$ for $1\leq i\leq r$.
Let $\{C_w'\mid w\in W_r\}$ be the Kazhdan--Lusztig basis of $\sH(W_r)$ defined in \cite[1.1(c)]{KL79}.  For $y,w\in W_r$ and $a,b\in\mbz$ let $P_{\rho^ay,\rho^bw}=\dt_{a,b}P_{y,w}$, where $P_{y,w}\in\sZ$ is the Kazhdan--Lusztig polynomial.
For $w=\rho^ax\in\affSr$ with $a\in\mbz$ and $x\in W_r$, let $C'_w=T_\rho^aC_x'.$
Then for $w\in\affSr$ we have $\ol{C'_w}=C'_w$ and
$$C'_w=\sum_{y\leq w,\,y\in\affSr}\up^{\ell(y)-\ell(w)}P_{y,w}\ti T_y$$
where $\ti T_y=\up^{-\ell(y)}T_y$. The set $\{C'_w\mid w\in\affSr\}$ is
called the canonical basis of $\afHr$.

For $d\in\afmsD_{\la,\mu}$ let $T_{\fS_\la d\fS_\mu}=\sum_{w\in\fS_\la d\fS_\mu}T_w$ and
$\ti T_{\fS_\la d\fS_\mu}=v^{-\ell(d^+)}T_{\fS_\la d\fS_\mu}$, where $d^+$ is the unique longest element in $\fS_\la d\fS_\mu$. According to \cite[(1.10)]{Curtis} and \cite[4.35]{DDPW} we have the following result.
\begin{Lem}\label{C'd +}
For $\la,\mu\in\afLanr$ and $d\in\afmsD_{\la,\mu}$
we have
$$C'_{d^+}=\sum_{y\in\afmsD_{\la,\mu}\atop y\leq d}v^{\ell(y^+)-\ell(d^+)}P_{y^+,d^+}\ti T_{\fS_\la y\fS_\mu},$$
where $y^+$ is the unique longest element in $\fS_\la y\fS_\mu$.
\end{Lem}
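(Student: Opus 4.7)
The plan is to reduce the identity to a classical formula of Curtis--Deodhar expressing the Kazhdan--Lusztig basis element attached to the longest element of a double coset in terms of the $T_{\fS_\la y \fS_\mu}$, and then package it with the length convention $\tilde T_y = v^{-\ell(y)}T_y$. Starting from the definition
$$C'_{d^+} = \sum_{w\in\affSr,\, w\leq d^+} v^{\ell(w)-\ell(d^+)} P_{w,d^+}\,\ti T_w,$$
I would group the summands according to which double coset $\fS_\la y\fS_\mu$ (with $y\in\afmsD_{\la,\mu}$) contains $w$. Two facts have to be checked for this grouping to produce the desired closed form.

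The first is a compatibility of the Bruhat order with double cosets: the set $\{w\in\affSr \mid w\le d^+\}$ decomposes as the disjoint union of $\fS_\la y\fS_\mu$ for $y\in\afmsD_{\la,\mu}$ with $y\le d$. For $\fS_r$ this is standard, and it extends to $\affSr$ via the decomposition $\affSr=\lan\rho\ran\ltimes\Wr$ together with the convention $\rho^a y\le\rho^b w \iff a=b,\, y\le w$ introduced above. The second and more substantial fact is the constancy of Kazhdan--Lusztig polynomials on double cosets: if $y\in\afmsD_{\la,\mu}$ and $w\in\fS_\la y\fS_\mu$, then
$$P_{w,d^+} = P_{y^+,d^+}.$$
This is precisely the content of Curtis \cite[(1.10)]{Curtis} in the finite case; on the affine side, the normalization $C'_w=T_\rho^a C'_x$ and the decomposition $P_{\rho^a y,\rho^b w}=\dt_{a,b}P_{y,w}$ reduce it to the finite-group statement for $W_r$.

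Granting the two facts, the computation is formal. Using
$$\ti T_{\fS_\la y\fS_\mu} = v^{-\ell(y^+)}\sum_{w\in\fS_\la y\fS_\mu}T_w = \sum_{w\in\fS_\la y\fS_\mu} v^{\ell(w)-\ell(y^+)}\,\ti T_w,$$
the contribution of the double coset $\fS_\la y\fS_\mu$ to $C'_{d^+}$ equals
$$\sum_{w\in\fS_\la y\fS_\mu} v^{\ell(w)-\ell(d^+)}P_{y^+,d^+}\,\ti T_w = v^{\ell(y^+)-\ell(d^+)}P_{y^+,d^+}\,\ti T_{\fS_\la y\fS_\mu},$$
and summing over $y\in\afmsD_{\la,\mu}$ with $y\le d$ yields the asserted formula.

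The main obstacle is the constancy statement $P_{w,d^+}=P_{y^+,d^+}$, which in Curtis' treatment rests on the fact that $C'_{d^+}$ is fixed under left multiplication by $C'_{s}$ (up to a factor of $v+v^{-1}$) for every simple reflection $s\in\fS_\la$, and symmetrically on the right for $s\in\fS_\mu$; this propagates Kazhdan--Lusztig polynomial values across the orbit of $y^+$ under $\fS_\la\times\fS_\mu$. Once this is in hand for $\Wr$, the extension to $\affSr$ is mechanical because conjugation by powers of $\rho$ preserves lengths, Bruhat order, the generators $T_{s_i}$ up to index shift, and hence all Kazhdan--Lusztig data, so the grouping argument sketched above goes through verbatim.
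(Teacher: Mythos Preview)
Your argument is correct and matches the paper's approach, which simply cites \cite[(1.10)]{Curtis} and \cite[4.35]{DDPW} for precisely the double-coset constancy $P_{w,d^+}=P_{y^+,d^+}$ that you invoke and then unpack. One small terminological slip: $W_r$ is the (infinite) affine Weyl group, not a finite group, but Curtis's argument works for any Coxeter group relative to finite parabolic subgroups $\fS_\la,\fS_\mu$, so your reduction via $\affSr=\langle\rho\rangle\ltimes W_r$ goes through exactly as you describe.
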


\subsection{}
We now recall the definition of canonical bases of affine quantum Schur algebras.
Note that $C_{w_{0,\la}}'=v^{-\ell(w_{0,\la})}x_\la$,
where $w_{0,\la}$ is the longest element in $\fS_\la$.
We define a map $\bar\ :\afSr\ra\afSr$ by $\up\mapsto\bar\up=\up^{-1}$, $f\mapsto\bar f$, where for $f\in\Hom_{\afHr}(x_{\mu}\afHr,x_\la\afHr)$,
$\bar f\in\Hom_{\afHr}(x_{\mu}\afHr,x_\la\afHr)$ is defined by $\bar f(C_{w_0,\mu}'h)=\ol{f(C_{w_0,\mu}')}h$ for $h\in\afHr$. Then the map $\bar\ :\afSr\ra\afSr$ is a ring involution (cf. \cite{Du92}).

For $A\in\afThn$ let
$\ro(A)=\bigl(\sum_{j\in\mbz}a_{i,j}\bigr)_{i\in\mbz}$ and
$\co(A)=\bigl(\sum_{i\in\mbz}a_{i,j}\bigr)_{j\in\mbz}.$
For $A\in\aftiThn$ and $i\not=j\in\mbz$, let\vspace{-2ex}
$$\sg_{i,j}(A)=\begin{cases}\sum\limits_{s\leq i,t\geq j}a_{s,t},\;&\text{ if $i<j$};\\
\sum\limits_{s\geq i,t\leq j}a_{s,t},\; & \text{ if
$i>j$}.\end{cases} $$
 For $A,B\in\aftiThn$, define
$B\pr A$ by the condition $\sg_{i,j}(B)\leq\sg_{i,j}(A)$ \text{ for all } $i\not=j.$
Put $B\p A$ if $B\pr A$ and, for some pair $(i,j)$ with $i\not=j$,
$\sg_{i,j}(B)<\sg_{i,j}(A)$.
For $A,B\in\aftiThn$ define
$B\sqsubseteq A$ \text{ if and only if $B\pr A$, $\co(B)=\co(A)$ and $\ro(B)=\ro(A)$.}
Put $B\sqsubset A$ if $B\sqsubseteq A$ and $B\not=A$.
According to \cite[6.1]{DF10} we know that the order relation $\sqsubseteq$ is a partial order relation on $\aftiThn$.

Lusztig proved in \cite{Lu99} that there is a unique $\sZ$-basis
\begin{equation}\label{bfBr}
\bfBr:=\{\th_{\Ar}\mid A\in\afThnr\}
\end{equation}
 for $\afSr$ such that $\ol{\th_{\Ar}}=\th_{\Ar}$ and
\begin{equation}\label{th_Ar}
\th_{A,r}-[A]\in\sum_{B\in\afThnr\atop B\sqsubset A}\up^{-1}\mbz[\up^{-1}][B],
\end{equation}
(see also \cite[7.6]{DF14}).
The set $\bfBr$ is called the canonical basis of $\afSr$.
\subsection{}

For $w\in\affSr$  let
$\msL(w)=\{(i,j)\in\mbz^2\mid1\leq i\leq r,\ i<j,\ w(i)>w(j)\}$ and $\msR(w)=\{(i,j)\in\mbz^2\mid1\leq j\leq r,\ i<j,\ w(i)>w(j)\}$.
The following result is given in \cite[(3.2.1.1)]{DDF} (see also \cite[5.2]{DF13}).
\begin{Lem}\label{combinatorial description of length}
For $w\in\affSr$, we have $\ell(w)=|\msL(w)|=|\msR(w)|$.
\end{Lem}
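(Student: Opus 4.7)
The plan is to split the proof into two essentially independent pieces: a combinatorial bijection yielding $|\msL(w)|=|\msR(w)|$, followed by an inductive argument identifying this common cardinality with the Coxeter length $\ell(w)$.

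For the bijection, I would introduce the full set of affine inversions $I(w)=\{(i,j)\in\mbz^2\mid i<j,\ w(i)>w(j)\}$ and observe that the periodicity $w(k+r)=w(k)+r$ makes $I(w)$ stable under the diagonal shift $\sigma\colon (i,j)\mapsto(i+r,j+r)$. Since each $\sigma$-orbit contains precisely one pair whose first coordinate lies in $\{1,\ldots,r\}$ and precisely one pair whose second coordinate lies in $\{1,\ldots,r\}$, both $\msL(w)$ and $\msR(w)$ are sets of orbit representatives for $\sigma$, which at once gives $|\msL(w)|=|I(w)/\sigma|=|\msR(w)|$. Finiteness comes for free: for each fixed $i\in\{1,\ldots,r\}$, only finitely many $j>i$ can satisfy $w(j)<w(i)$, because $w(j)\to+\infty$ as $j\to+\infty$ by periodicity.

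For the length formula $\ell(w)=|\msR(w)|$, I would first reduce to $w\in\Wr$ using the decomposition $\affSr=\langle\rho\rangle\ltimes\Wr$. The element $\rho$ has length zero and left multiplication by $\rho^k$ sends $w(j)$ to $w(j)+k$ uniformly, hence preserves every inequality $w(i)>w(j)$; so both $\ell(\rho^k w)$ and $|\msR(\rho^k w)|$ agree with $\ell(w)$ and $|\msR(w)|$. For $w\in\Wr$ I would induct on $\ell(w)$: the base case $w=e$ is immediate, since $\msR(e)=\emptyset$. For the inductive step, pick a simple reflection $s_i$ with $\ell(ws_i)=\ell(w)-1$, equivalently $w(i)>w(i+1)$, and check that right multiplication by $s_i$ removes from $\msR(w)$ exactly the pair $(i,i+1)$ (unique representative modulo $\sigma$), while pairing up all other inversions via the action of $s_i$ on the residue classes.

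The main obstacle is the inductive step, where right multiplication by $s_i$ simultaneously swaps the values of $w$ on all residue classes $\equiv i,i+1\pmod r$. The accounting of which pairs $(a,b)\in\msR(w)$ with $a\equiv i$ or $a\equiv i+1\pmod r$ survive, disappear, or get replaced by new inversions requires a careful case analysis on the residue of $a$. Should that accounting grow unwieldy, I would instead identify $\msR(w)$ with the set of positive affine real roots of type $\widetilde A_{r-1}$ sent to negative roots by $w$; the lemma would then follow from the standard length formula for affine Coxeter groups, and the first step of the plan would be subsumed, since the root-theoretic description is symmetric in the two indices from the outset.
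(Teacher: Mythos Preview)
The paper does not supply a proof of this lemma: it is quoted from \cite[(3.2.1.1)]{DDF} (with a pointer to \cite[5.2]{DF13}), so there is no argument in the paper against which to compare your proposal.

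Your plan is correct and follows the standard route. The $\sigma$-orbit argument for $|\msL(w)|=|\msR(w)|$ is clean and complete as written. For the identity $\ell(w)=|\msR(w)|$, the reduction to $W_r$ via $\rho$ is fine. One small wrinkle in the inductive step: the clause ``$\ell(ws_i)=\ell(w)-1$, equivalently $w(i)>w(i+1)$'' is itself very close to the statement being proved, so you should not simply assert it. Either quote it as the standard descent criterion for affine permutations (it follows from general Coxeter theory via $\ell(ws_i)<\ell(w)\Leftrightarrow w\alpha_i<0$ once the simple roots are identified concretely), or reorganise the induction: first establish the purely combinatorial fact that $|\msR(ws_i)|=|\msR(w)|+1$ when $w(i)<w(i+1)$ and $|\msR(ws_i)|=|\msR(w)|-1$ when $w(i)>w(i+1)$, and then combine with the Coxeter-theoretic fact $\ell(ws_i)=\ell(w)\pm1$ to deduce the length formula and the descent criterion simultaneously. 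Also note that for $i=r$ the relevant inversion is the $\sigma$-representative $(0,1)\in\msR(w)$ rather than $(r,r+1)$, exactly as you indicate with your parenthetical remark. Your fallback via the positive-root count for affine Weyl groups is the textbook argument and sidesteps this bookkeeping entirely.
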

For $i\in\mbz$ the image of $i$ in $\mbz/r\mbz$ will be denoted by $\bar i$. The following corollary can be proved by a standard argument by  using  Lemma \ref{combinatorial description of length}. So we omit the proof.

\begin{Coro}\label{(i,j)y}
Let $x\in\affSr$ and $i_0,j_0\in\mbz$ such that $i_0<j_0$ and $\ol{i_0}\not=\ol{j_0}$. Then we have
$x<(i_0,j_0)x$ if and only if $x^{-1}(i_0)<x^{-1}(j_0)$, i.e. $i_0$ occurs in the left of $j_0$ in the sequence $(x(s))_{s\in\mbz}$.
\end{Coro}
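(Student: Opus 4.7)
The plan is to reduce the Bruhat inequality $x<(i_0,j_0)x$ to a length comparison and then read off the latter from Lemma \ref{combinatorial description of length}. Set $t=(i_0,j_0)$. Since $\ol{i_0}\ne\ol{j_0}$, the element $t$ is a reflection lying in $\Wr$, which is a Coxeter group. By the standard strong exchange property in a Coxeter group, for any reflection $t$ we have $y<ty$ if and only if $\ell(ty)>\ell(y)$; the definition of the Bruhat order on $\affSr=\langle\rho\rangle\ltimes\Wr$ recalled in the text extends this equivalence to all $x\in\affSr$ (the extra $\rho^i$-factor must match on both sides). It therefore suffices to show $\ell(tx)>\ell(x)\iff x^{-1}(i_0)<x^{-1}(j_0)$.

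For the length computation I would apply Lemma \ref{combinatorial description of length} to write $\ell(x)=|\msL(x)|$ and $\ell(tx)=|\msL(tx)|$ and analyze the symmetric difference $\msL(x)\triangle\msL(tx)$. A pair $(i,j)$ with $1\le i\le r$ and $i<j$ lies in this symmetric difference precisely when $t$ reverses the order of the two values $x(i)$ and $x(j)$; this forces $\{x(i),x(j)\}$ to meet $(i_0+r\mbz)\cup(j_0+r\mbz)$. Since $t$ fixes every integer outside these two cosets and exchanges $i_0+kr\leftrightarrow j_0+kr$ for each $k\in\mbz$, contributions from pairs whose two values lie in $i_0+kr$ and $j_0+kr$ for the same $k\ne 0$ pair off, and contributions from pairs with one value fixed by $t$ and one value in these cosets cancel in pairs under the bijection $i_0+kr\leftrightarrow j_0+kr$. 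The only surviving term in $|\msL(tx)|-|\msL(x)|$ is the one recording the window positions at which $i_0$ and $j_0$ themselves appear in the sequence $(x(s))_{s\in\mbz}$.

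Concretely, write $a=x^{-1}(i_0)$ and $b=x^{-1}(j_0)$. A direct inspection of the distinguished pair (translated into the window by a suitable $\rho^i$ if necessary) shows that it contributes $+1$ to $\ell(tx)-\ell(x)$ when $a<b$ (so $i_0$ appears to the left of $j_0$ in $(x(s))_{s\in\mbz}$ and $t$ creates a genuine new inversion) and $-1$ when $a>b$. Combined with the Bruhat-order reduction of the first paragraph, this yields the corollary. The main obstacle is the combinatorial bookkeeping of the second paragraph: one must verify carefully that all non-distinguished contributions really do cancel in pairs under the involution $t$. This reduces to the fact that $t$ shifts any value $i_0+kr$ by the constant $j_0-i_0>0$ into $j_0+kr$, which preserves the relative order of such values with any fixed integer outside the two cosets, while inside the two cosets the swap $i_0+kr\leftrightarrow j_0+kr$ pairs up the remaining contributions symmetrically.
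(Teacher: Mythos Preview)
Your overall strategy is the one the paper indicates (it omits the proof, citing Lemma~\ref{combinatorial description of length}): reduce $x<(i_0,j_0)x$ to the length inequality $\ell((i_0,j_0)x)>\ell(x)$ via standard Coxeter theory, and then determine the sign of the length difference combinatorially. The first paragraph is correct.

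The analysis in the second and third paragraphs, however, contains a real error. Your cancellation claims would force $\ell(tx)-\ell(x)=\pm1$, which is false. Take $r=3$, $x=1$ and $t=(1,3)$: then $\ell(tx)=3$. The pair $(1,2)$ (with $x(1)=1$ moved by $t$ and $x(2)=2$ fixed) lies in $\msL(tx)\setminus\msL(x)$ since $t(1)=3>2$; your proposed partner under $i_0+kr\leftrightarrow j_0+kr$ involves the value $3$, but the window pair carrying it, namely $(2,3)$, \emph{also} lies in $\msL(tx)\setminus\msL(x)$ and contributes $+1$, not $-1$. The source of the mistake is the sentence ``$t$ shifts any value $i_0+kr$ by the constant $j_0-i_0>0$ \dots which preserves the relative order of such values with any fixed integer outside the two cosets'': this fails precisely when that fixed integer lies in the open interval $(i_0+kr,\,j_0+kr)$.

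What you actually need is not that the non-distinguished contributions cancel, but that they all carry the \emph{same} sign as the distinguished one, so that $\ell(tx)-\ell(x)$ is an odd integer whose sign is governed by whether $x^{-1}(i_0)<x^{-1}(j_0)$. One clean route: set $w=x^{-1}$, so the claim becomes $\ell(wt)>\ell(w)\iff w(i_0)<w(j_0)$; replacing $w$ by $wt$ interchanges the two sides of the equivalence, so it suffices to prove one implication, and for that one can exhibit an explicit injection from $\msL(w)\setminus\msL(wt)$ into $\msL(wt)\setminus\msL(w)$ that misses the distinguished pair.
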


For $i\in\mbz$ let $(-\iy,i]=\{\bfa=(a_s)_{s\leq i}|a_s\in\mbz\}$ and $[i,+\iy)=\{\bfa=(a_s)_{s\geq i}|a_s\in\mbz\}$. If either $\bfa,\bfb\in (-\iy,i]$ or $\bfa,\bfb\in [i,+\iy)$, we write $\bfa\leq\bfb$ if $a_s\leq b_s$ for all $s$.
Given $\bfa=(a_s)\in\afmbzn$ and an integer $i$ we let  $(a_s)_{s\leq i}^{\mathrm{sorted}}=(b_s)_{s\leq i}$ such that
$\{a_s|s\leq i\}=\{b_s|s\leq i\}$ and $b_{s-1}\leq b_s$ for $s\leq i$. Similarly we may define $(a_s)_{s\geq i}^{\mathrm{sorted}}$
for $\bfa\in\afmbzn$ and $i\in\mbz$.

By Corollay \ref{(i,j)y} we have the following result.
\begin{Coro}\label{Bruhat order}
Let $y,w\in\affSr$. If $y\leq w$ then for any $i\in\mbz$ we have $(y(s))_{s\leq i}^{\mathrm{sorted}}\leq (w(s))_{s\leq i}^{\mathrm{sorted}}$ and $(y(s))_{s\geq i}^{\mathrm{sorted}}\geq (w(s))_{s\geq i}^{\mathrm{sorted}}$.
\end{Coro}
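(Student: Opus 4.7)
The plan is to induct on the length of a Bruhat chain so as to reduce the statement to a single cover $y\lessdot(i_0,j_0)y$, and then to analyze the change in the multiset of images directly via Corollary \ref{(i,j)y}.

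For the reduction, the definition of $\le$ on $\affSr$ writes any chain $y\le w$ as a product of reflections $w=t_m\cdots t_1 y$ with $t_k\in T$ and strictly increasing length, and the semidirect product description $\affSr=\langle\rho\rangle\ltimes W_r$ together with the definition of the extended Bruhat order forces a common $\rho$-component throughout the chain, so only genuine reflections $(i_0,j_0)\in T$ act. Since the pointwise relations $\le$ on $(-\iy,i]$ and on $[i,+\iy)$ are transitive, it suffices to treat a single cover $y<w:=(i_0,j_0)y$ with $i_0<j_0$ and $\ol{i_0}\ne\ol{j_0}$. By Corollary \ref{(i,j)y}, this cover condition is equivalent to $s_1:=y^{-1}(i_0)<y^{-1}(j_0)=:s_2$.

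Next, using periodicity $y(s+r)=y(s)+r$, the value $i_0+kr$ sits at position $s_1+kr$ in the sequence $(y(s))_{s\in\mbz}$ and $j_0+kr$ at $s_2+kr$; the element $w$ is obtained from $y$ by swapping $i_0+kr\leftrightarrow j_0+kr$ at these positions and agreeing with $y$ elsewhere. Setting $K_1=\lfloor(i-s_1)/r\rfloor\ge K_2=\lfloor(i-s_2)/r\rfloor$ one obtains
\begin{align*}
\{y(s)\mid s\le i\}&=S\sqcup\{i_0+kr\mid k\le K_1\}\sqcup\{j_0+kr\mid k\le K_2\},\\
\{w(s)\mid s\le i\}&=S\sqcup\{j_0+kr\mid k\le K_1\}\sqcup\{i_0+kr\mid k\le K_2\},
\end{align*}
with the same common part $S$. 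Passing from $y$ to $w$ therefore replaces the finitely many values $i_0+kr$ for $K_2<k\le K_1$ by the strictly larger values $j_0+kr$ and leaves the rest untouched. Invoking the elementary fact that enlarging finitely many entries of a multiset enlarges its sorted rearrangement componentwise, applied on a sufficiently long tail that captures all modifications, yields $(y(s))_{s\le i}^{\mathrm{sorted}}\le(w(s))_{s\le i}^{\mathrm{sorted}}$.

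The statement for $s\ge i$ is proved analogously, with floors replaced by ceilings $K_1'=\lceil(i-s_1)/r\rceil\ge K_2'=\lceil(i-s_2)/r\rceil$: in that range the same swap substitutes the larger values $j_0+kr$ (for $K_2'\le k<K_1'$) by the smaller $i_0+kr$, giving the reversed pointwise inequality. The only real obstacle is the careful bookkeeping of the floor/ceiling bounds so that the common part $S$ is correctly isolated and the replacements counted exactly; once that is in hand, both inequalities follow from the standard finite rearrangement lemma.
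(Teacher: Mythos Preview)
Your argument is correct and is precisely the approach the paper intends: the paper gives no detailed proof, merely stating that the corollary follows from Corollary~\ref{(i,j)y}, and your induction along a Bruhat chain together with the explicit multiset analysis of a single reflection $(i_0,j_0)$ is exactly how one unpacks that implication. The bookkeeping with $K_1,K_2$ (and $K_1',K_2'$) is accurate, and the ``rearrangement lemma'' you invoke is the elementary observation that replacing finitely many entries of a set by strictly larger ones can only increase each order statistic, which is immediate from counting $|\{a\in A: a\ge m\}|$ for each threshold $m$.
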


\subsection{}
Recall the map $\jmath_\vtg$ defined in \eqref{jmath}.
Given $A\in\afThnr$, write $y_A=w$ if  $A=\jmath_\vtg(\la,w,\mu)$. For $A,B\in\afThnr$, define
$B\Boleq A$ by the condition $\ro(B)=\ro(A),\ \co(B)=\co(A)$ and $y_B\leq y_A.$ Put $B\Bol A$ if $B\Boleq A$ and $B\not=A$. Then $\Boleq$ is a partial order relation on $\afThnr$.

Recall that $V$ is a $\field[\ep,\ep^{-1}]$-free
module of rank $r\in\mbn$. Let $\{v_1,v_2,\cdots,v_r\}$ be a fixed $\field[\ep,\ep^{-1}]$-basis of $V$. We set $v_{i+kr}=\ep^{-k}v_i$ for $1\leq i\leq r$ and $k\in\mbz$.
\begin{Lem}\label{L(A)}
Let $A\in\afThnr$, $\la=\ro(A)$ and $\mu=\co(A)$. Let $\bfL(A)=(L_i)_{i\in\mbz}$ and $\bfL'(A)=(L_i')_{i\in\mbz}$ where
\begin{equation*}
\begin{split}
L_{i+kn}&=\spann_\field\bigg\{v_a\big|a\in\bin_{t\leq i+kn}R_t^\la\bigg\}=\spann_\field\bigg\{v_a\big|a\leq\sum_{1\leq j\leq i}\la_j+kr\bigg\}\\
L_{i+kn}'&=\spann_\field\bigg\{v_{y_A(a)}\big|a\in\bin_{t\leq i+kn}R_t^\mu\bigg\}=\spann_\field\bigg\{v_{y_A(a)}\big|a\leq\sum_{1\leq j\leq i}\mu_j+kr\bigg\}
\end{split}
\end{equation*}
for $1\leq i\leq n$ and $k\in\mbz$. Then
we have $(\bfL(A),\bfL'(A))\in\sO_A$.
\end{Lem}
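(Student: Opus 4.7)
The plan is to verify the statement directly from the definition of the orbit correspondence recalled in the text, namely that $(\bfL,\bfL')\in\sO_A$ iff $a_{i,j}=\dim_\field(L_i\cap L_j')/(L_{i-1}\cap L_j'+L_i\cap L_{j-1}')$ for all $i,j\in\mbz$. I would carry this out in two stages: first check that $\bfL(A)$ and $\bfL'(A)$ are genuine lattice filtrations in $\afFn$, then match the orbit invariants entry-by-entry with $A$ using the description of $\jmath_\vtg$ from \eqref{jmath}.

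\textbf{Stage 1 (lattice axioms).} The inclusion $L_{i-1}\subseteq L_i$ is immediate since the defining index set for $L_{i-1}$ is contained in that for $L_i$ (because $\la_i\ge0$), and similarly for $\bfL'(A)$. For the $\ep$-periodicity $L_{i-n}=\ep L_i$ I would use $\sg(\la)=r$ (since $\la\in\afLanr$) together with the convention $v_{i+kr}=\ep^{-k}v_i$, which yields $\ep v_b=v_{b-r}$; then the index set $\{a\mid a\le\sum_{j\le i}\la_j\}$ is shifted by $r$ to give $L_{i-n}$. For $\bfL'(A)$ the analogous computation uses both $\sg(\mu)=r$ and the periodicity $y_A(a+r)=y_A(a)+r$ of $y_A\in\afsygr$.

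\textbf{Stage 2 (matching the orbit).} Set $S_i=\bin_{t\le i}R_t^\la$ and $T_j=\bin_{t\le j}R_t^\mu$, so that $L_i=\spann_\field\{v_a\mid a\in S_i\}$ and $L_j'=\spann_\field\{v_b\mid b\in y_A(T_j)\}$. Because the $v_a$ are an $\field$-basis and both $L_i$ and $L_j'$ are coordinate subspaces with respect to it, one has $L_i\cap L_j'=\spann_\field\{v_a\mid a\in S_i\cap y_A(T_j)\}$ and $L_{i-1}\cap L_j'+L_i\cap L_{j-1}'=\spann_\field\{v_a\mid a\in (S_{i-1}\cap y_A(T_j))\cup(S_i\cap y_A(T_{j-1}))\}$. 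Using the disjoint unions $S_i=S_{i-1}\sqcup R_i^\la$ and $T_j=T_{j-1}\sqcup R_j^\mu$, a one-line inclusion-exclusion yields
\[
\dim_\field\frac{L_i\cap L_j'}{L_{i-1}\cap L_j'+L_i\cap L_{j-1}'}=|R_i^\la\cap y_A(R_j^\mu)|,
\]
which is exactly $a_{i,j}$ by the definition of $\jmath_\vtg$ in \eqref{jmath}. Hence $(\bfL(A),\bfL'(A))\in\sO_A$.

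The argument is essentially bookkeeping, and the only mildly delicate point is the $\ep$-periodicity check for $\bfL'(A)$, where one needs $y_A$ to commute with translation by $r$; this is exactly the defining property of elements of $\afsygr$, so no real obstacle arises. The whole proof is therefore a direct unwinding of the definitions of $R_i^\la$, $\jmath_\vtg$, and the orbit parametrization of \cite[1.5]{Lu99}.
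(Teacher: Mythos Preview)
Your proposal is correct and follows the same route as the paper's proof: compute $L_i\cap L_j'$ as the coordinate subspace indexed by $\bigl(\bigcup_{t\le i}R_t^\la\bigr)\cap y_A\bigl(\bigcup_{t\le j}R_t^\mu\bigr)$ and then pass to the quotient to obtain $|R_i^\la\cap y_A(R_j^\mu)|=a_{i,j}$. The only difference is that you explicitly verify the lattice-filtration axioms in Stage~1, which the paper leaves implicit; this is a welcome addition rather than a different argument.
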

\begin{proof}
By definition we have
$L_i\cap L_j'=\spann_\field\{v_a|a\in\bin_{t\leq i}R_t^\la,\ a\in\bin_{t\leq j}y_A(R_t^\mu)\}$ for $i,j\in\mbz$. Hence for $i,j\in\mbz$
we have
$$\frac{L_i\cap L_j'}{L_{i-1}\cap L_j'+L_i\cap L_{j-1}'}=\spann_\field\{\ol{v}_a|a\in R_i^\la\cap y_A(R_j^\mu)\}.$$
The assertion follows.
\end{proof}

\begin{Lem}\label{Bruhat order and pr}
$(1)$ If $A,B\in\afThnr$ and $B\Boleq A$ then $B\sqsubseteq A$.

$(2)$ If $A,B\in\afThnr$ and $B\Bol A$ then $B\sqsubset A$.
\end{Lem}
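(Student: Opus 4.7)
The plan is to convert the inequality $\sg_{i,j}(B)\leq\sg_{i,j}(A)$ into an inequality about counts of values of the affine permutations $y_A,y_B$, and then to read the latter off from Corollary~\ref{Bruhat order}. Set $\la=\ro(A)=\ro(B)$, $\mu=\co(A)=\co(B)$, and write $\la^{(i)}=\sum_{1\leq k\leq i}\la_k$ for $1\leq i\leq n$, extended by $\la^{(i+kn)}=\la^{(i)}+kr$ for $k\in\mbz$; analogously $\mu^{(j)}$. By \eqref{Rtla}, $\bigcup_{s\leq i}R_s^\la=\{a\in\mbz\mid a\leq\la^{(i)}\}$ and $\bigcup_{t\geq j}R_t^\mu=\{a\in\mbz\mid a>\mu^{(j-1)}\}$, with analogous descriptions for the complementary partial unions.

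For (1), the definition of $\jmath_\vtg$ from \eqref{jmath} gives $a_{s,t}=|R_s^\la\cap y_A(R_t^\mu)|$, with the analogous formula for $B$ using the \emph{same} $\la,\mu$. Summing over $s\leq i,\,t\geq j$ with $i<j$ and substituting $b=y_A^{-1}(a)$, I expect to obtain
\[
\sg_{i,j}(A)=\bigl|\{b\in\mbz\mid b>\mu^{(j-1)},\ y_A(b)\leq\la^{(i)}\}\bigr|,
\]
and symmetrically for $i>j$,
\[
\sg_{i,j}(A)=\bigl|\{b\in\mbz\mid b\leq\mu^{(j)},\ y_A(b)>\la^{(i-1)}\}\bigr|,
\]
and likewise for $B$. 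The hypothesis $y_B\leq y_A$ combined with Corollary~\ref{Bruhat order} gives $(y_B(s))_{s\geq\mu^{(j-1)}+1}^{\mathrm{sorted}}\geq(y_A(s))_{s\geq\mu^{(j-1)}+1}^{\mathrm{sorted}}$ in the first case and $(y_B(s))_{s\leq\mu^{(j)}}^{\mathrm{sorted}}\leq(y_A(s))_{s\leq\mu^{(j)}}^{\mathrm{sorted}}$ in the second; componentwise comparison of these sorted sequences then shows, in either case, that fewer $y_B$-values in the prescribed half-line fall into the prescribed cutoff interval, yielding $\sg_{i,j}(B)\leq\sg_{i,j}(A)$. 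Together with the matching row and column sums, this is exactly $B\sqsubseteq A$.

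For (2), $B\Bol A$ unpacks as $B\Boleq A$ together with $B\neq A$, so (1) supplies $B\sqsubseteq A$, and combined with $B\neq A$ this is $B\sqsubset A$ by definition. The only nontrivial technical step is the translation of $\sg_{i,j}$ into a count of values of $y_A$ in a prescribed slab, which requires careful bookkeeping for the two cases $i<j$ and $i>j$ and for the periodic extensions of $\la^{(i)},\mu^{(j)}$; once that is set up, Corollary~\ref{Bruhat order} closes the argument essentially for free, and I do not anticipate any further obstacle.
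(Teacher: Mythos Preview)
Your argument is correct. For part~(1) it is essentially the paper's proof in different packaging: the paper routes the computation through the lattice model (Lemma~\ref{L(A)}), identifying the quantities $\dim_\field(L_i/(L_i\cap L_{j-1}'))$ and $\dim_\field(L_j'/(L_{i-1}\cap L_j'))$ with exactly the counts you write down, then invokes \cite[1.6(a)]{Lu99} to convert these dimension inequalities into $B\pr A$; you compute $\sg_{i,j}(A)$ directly from the combinatorial description of $\jmath_\vtg$ and thereby avoid the detour through $\sO_A$ and the external citation. Either way the actual work is the same application of Corollary~\ref{Bruhat order} to bound the number of values of $y_B$ versus $y_A$ in a half-line that land in a given cutoff.

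For part~(2) your argument is shorter than the paper's: since $B\sqsubset A$ is defined as $B\sqsubseteq A$ together with $B\neq A$, part~(1) plus the hypothesis $B\neq A$ suffices immediately. The paper instead argues that $B\pr A$, $\ro(B)=\ro(A)$, and $B\not\prec A$ would force $A$ and $B$ to have the same off-diagonal entries (using \cite[6.1]{DF10}) and hence $A=B$; this establishes the stronger conclusion $B\prec A$, which is more than what is literally required for~(2) as stated.
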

\begin{proof}If $B\Boleq A$ then $\ro(B)=\ro(A)$, $\co(B)=\co(A)$ and $y{_B}\leq y{_A}$. We denote $\la=\ro(B)$ and $\mu=\co(B)$. Let $\bfL=\bfL(A)=\bfL(B)$, $\bfL'=\bfL'(A)$ and $\bfL''=\bfL'(B)$. Then by Lemma \ref{L(A)} we have
$(\bfL,\bfL')\in\sO_A$ and $(\bfL',\bfL'')\in\sO_{B}$. By definition for $i,j\in\mbz$ we have
\begin{equation*}
\begin{split}
L_i/(L_i\cap L_{j-1}')&=\spann_\field\{\ol{v}_{y{_A}(a)}|y{_A}(a)\in\bin_{t\leq i}R_t^\la,\ a\in\bin_{t\geq j}R_t^\mu\},\\
L_i/(L_i\cap L_{j-1}'')&=\spann_\field\{\ol{v}_{y{_B}(a)}|y{_B}(a)\in\bin_{t\leq i}R_t^\la,\ a\in\bin_{t\geq j}R_t^\mu\}\\
L_j'/(L_{i-1}\cap L_j')&=\spann_\field\{\ol{v}_{y{_A}(a)}|a\in\bin_{t\leq j}R_t^\mu,\ y{_A}(a)\in\bin_{t\geq i}R_t^\la\},\\
L_j''/(L_{i-1}\cap L_j'')&=\spann_\field\{\ol{v}_{y{_B}(a)}|a\in\bin_{t\leq j}R_t^\mu,\ y{_B}(a)\in\bin_{t\geq i}R_t^\la\}.\\
\end{split}
\end{equation*}
Since $y{_B}\leq y{_A}$, by Corollary \ref{Bruhat order} we have
$\dim_\field(L_i/(L_i\cap L_{j-1}''))\leq\dim_\field(L_i/(L_i\cap
L_{j-1}'))$ and $\dim_\field(L_j''/(L_{i-1}\cap
L_j''))\leq\dim_\field(L_j'/(L_{i-1}\cap L_j'))$. Hence by \cite[1.6(a)]{Lu99} we conclude that
$B\sqsubseteq A$. Thus (1) holds.
Now we assume $B\Bol A$. Suppose that $B\not\p A$.
Then by (1) we have $B\pr A$ and $\ro(B)=\ro(A)$. Hence by
\cite[6.1]{DF10} we see that $B$ and $A$ have the same off
diagonal entries. Since $\ro(B)=\ro(A)$ we must have $A=B$. This is
a contradiction. Hence $B\p A$. The assertion (2) follows.
\end{proof}

\subsection{}
For $A\in\afThnr$ let $y_A^+$ be the  unique longest element in $\fS_\la y_A\fS_\mu$, where $\la=\ro(A)$ and $\mu=\co(A)$.
The following result is given in \cite[7.1]{DF14}.
\begin{Lem}\label{longest element}
For $A\in\afThnr$ we have $\ell(y_A^+)=d_A+\ell(w_{0,\mu})$ where $\mu=\co(A)$ and $d_A$ is given in \eqref{dA}.
\end{Lem}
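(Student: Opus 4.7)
My approach is a direct inversion count based on Lemma~\ref{combinatorial description of length}. Write $\la=\ro(A)$ and $\mu=\co(A)$. By Lemma~\ref{L(A)} the triple $(\la,y_A,\mu)$ reconstructs $A$ via $a_{k,l}=|R_k^\la\cap y_A(R_l^\mu)|$, and since $y_A^+\in\fS_\la y_A\fS_\mu$ the same description holds with $y_A$ replaced by $y_A^+$; in particular, for each $1\le l\le n$, the image set $y_A^+(R_l^\mu)$ decomposes as $\bigsqcup_{k\in\mbz}(R_k^\la\cap y_A^+(R_l^\mu))$ with $a_{k,l}$ elements in each $R_k^\la$.

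Next, I would use Lemma~\ref{combinatorial description of length} to rewrite $\ell(y_A^+)=|\msR(y_A^+)|$ and characterise $y_A^+$ by its two maximality features: (a) the restriction of $y_A^+$ to each $R_l^\mu$ is order-reversing onto its image, since otherwise some transposition in $\fS_\mu$ on the right would strictly increase length; (b) for $j_0<l_0$ in $\{1,\ldots,n\}$, within each $R_k^\la$ the $a_{k,j_0}$ elements of $y_A^+(R_{j_0}^\mu)\cap R_k^\la$ lie pointwise above the $a_{k,l_0}$ elements of $y_A^+(R_{l_0}^\mu)\cap R_k^\la$, since otherwise a transposition in $\fS_\la$ on the left would strictly increase length. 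These two features extend by affine periodicity to the shifted blocks $R_{q+tn}^\mu$ via $y_A^+(R_{q+tn}^\mu)=y_A^+(R_q^\mu)+tr$.

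Finally, I would count $|\msR(y_A^+)|$ by partitioning pairs $(i,j)$ with $1\le j\le r$ and $i<j$ according to the $\mu$-blocks containing them. Writing $j\in R_{l_0}^\mu$ with $1\le l_0\le n$ and $i\in R_{q_0+tn}^\mu$ with $1\le q_0\le n$, $t\le 0$, the inversions split into: the within-block case $(t=0,\,q_0=l_0)$, contributing $\binom{\mu_{l_0}}{2}$ per block and summing to $\ell(w_{0,\mu})$ by (a); and the between-block cases, where property (b) together with the pointwise ordering $R_k^\la<R_{k'}^\la$ (for $k<k'$) produces an inversion count of $a_{i,q_0}a_{k,l_0}$ whenever $i\ge k$ and zero when $i<k$. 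After using the affine periodicity $a_{i+n,j+n}=a_{i,j}$ to rewrite the sum over $t<0$ (which folds each affine shift in the $\mu$-index back into the column index), the between-block total collapses to $\sum_{1\le i\le n,\,i\ge k,\,j<l}a_{i,j}a_{k,l}=d_A$ as in \eqref{dA}. Summing the two contributions yields $\ell(y_A^+)=d_A+\ell(w_{0,\mu})$.

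The main obstacle is the final step: verifying property (b) rigorously by an exchange argument in $\fS_\la$, and then performing the affine-periodicity reduction so that the $t<0$ terms combine with the $t=0,\,j_0<l_0$ terms to precisely reproduce the restricted sum defining $d_A$, without over- or under-counting in the range $1\le i\le n$.
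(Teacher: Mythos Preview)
Your approach is sound, and in fact the paper does not prove this lemma at all—it simply cites \cite[7.1]{DF14}. Your direct inversion-counting argument via Lemma~\ref{combinatorial description of length} is a valid self-contained proof.

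Regarding the obstacle you flagged: both points resolve cleanly. Property~(b) follows from the left-descent characterisation of $y_A^+$: since $\ell(s\,y_A^+)<\ell(y_A^+)$ for every simple reflection $s\in\fS_\la$, the map $(y_A^+)^{-1}$ is order-reversing on every $R_k^\la$, and this forces the claimed ordering for \emph{all} block pairs $q<l$, not just those with indices in $\{1,\ldots,n\}$; no separate exchange argument is needed. For the periodicity reduction, your between-block count naturally emerges as
\[
\sum_{1\le l\le n}\;\sum_{j<l}\;\sum_{i\ge k}a_{i,j}a_{k,l},
\]
with $l$ restricted to a fundamental domain, whereas $d_A$ restricts $i$ instead. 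These agree because each summand $a_{i,j}a_{k,l}$ and the constraints $i\ge k$, $j<l$ depend only on the orbit of $(i,j,k,l)$ under the diagonal shift $(i,j,k,l)\mapsto(i+n,j+n,k+n,l+n)$, and restricting either coordinate $i$ or $l$ to $\{1,\ldots,n\}$ selects exactly one representative per orbit.
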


For $\la,\mu\in\afLanr$ and
$d\in\afmsD_{\la,\mu}$, define
$\th_{\la,\mu}^d\in\afSr$ as follows:
\begin{equation*}
\th_{\la,\mu}^d(x_\nu h)=\dt_{\mu\nu}v^{\ell(w_{0,\mu})}C'_{d^+}h,
\end{equation*}
where $\nu\in\afLanr$, $h\in\afHr$ and $d^+$ is the  unique longest element in $\fS_\la d\fS_\mu$.
\begin{Prop}\label{KL-basis}
Assume $\la,\mu\in\afLanr$, $d\in\msD_{\la,\mu}$ and $A=\jmath_\vtg(\la,d,\mu)\in\afThnr$. Then we have
$$\th_{\la,\mu}^d=\th_{\Ar}=\sum_{B\in\afThnr\atop
B\Boleq A}v^{\ell(y_B^+)-\ell(y_A^+)}P_{y_B^+,y_A^+}[B],$$
where $P_{y_B^+,y_A^+}$ is the Kazhdan--Lusztig polynomial.
\end{Prop}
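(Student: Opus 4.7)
The statement contains two equalities. The plan is to prove the second (the explicit expansion of $\th_{\la,\mu}^d$ in the $\{[B]\}$-basis) directly from Lemma \ref{C'd +}, and then deduce $\th_{\la,\mu}^d=\th_{A,r}$ from the uniqueness characterization \eqref{th_Ar} of the canonical basis.

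For the expansion, I unwind the definition of $\th_{\la,\mu}^d$. Applying Lemma \ref{C'd +} to $C'_{d^+}$ and using that $\phi_{\la,\mu}^y(x_\mu)=T_{\fS_\la y\fS_\mu}$ together with $\ti T_{\fS_\la y\fS_\mu}=v^{-\ell(y^+)}T_{\fS_\la y\fS_\mu}$, I will obtain
$$\th_{\la,\mu}^d=\sum_{y\in\afmsD_{\la,\mu},\,y\le d}v^{\ell(w_{0,\mu})-\ell(d^+)}P_{y^+,d^+}\,\phi_{\la,\mu}^y.$$
Under the identification $\phi_{\la,\mu}^y=e_B$ for $B=\jmath_\vtg(\la,y,\mu)$ and using $[B]=v^{-d_B}e_B$ together with Lemma \ref{longest element} (which gives $d_B+\ell(w_{0,\mu})=\ell(y_B^+)$ and $\ell(d^+)=\ell(y_A^+)$ since $y_A=d$), the coefficient of $[B]$ becomes exactly $v^{\ell(y_B^+)-\ell(y_A^+)}P_{y_B^+,y_A^+}$. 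The indexing set is correct since $\ro(B)=\la=\ro(A)$, $\co(B)=\mu=\co(A)$, and $y_B=y\le d=y_A$ says precisely $B\Boleq A$.

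For the identification $\th_{\la,\mu}^d=\th_{A,r}$, by the uniqueness in \eqref{th_Ar} it suffices to verify bar-invariance and the $v^{-1}\mbz[v^{-1}]$-triangularity with leading term $[A]$. Bar-invariance is immediate: since $C'_{w_{0,\mu}}=v^{-\ell(w_{0,\mu})}x_\mu$, the endomorphism $\th_{\la,\mu}^d$ sends $C'_{w_{0,\mu}}$ to $C'_{d^+}$, which is bar-invariant in $\afHr$, so the definition of the bar involution on $\afSr$ yields $\ol{\th_{\la,\mu}^d}=\th_{\la,\mu}^d$. For the triangularity, the $B=A$ term of the expansion is $v^0\cdot P_{y_A^+,y_A^+}[A]=[A]$. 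For $B\Bol A$ with $B\ne A$, the standard degree bound $\deg P_{y_B^+,y_A^+}\le(\ell(y_A^+)-\ell(y_B^+)-1)/2$ places the coefficient in $v^{-1}\mbz[v^{-1}]$, and Lemma \ref{Bruhat order and pr}(2) converts $B\Bol A$ into $B\sqsubset A$, giving the expansion in the form required by \eqref{th_Ar}.

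The main technical point that must be handled carefully is the passage from the strict Bruhat inequality $y_B<y_A$ between distinguished double coset representatives to a strict inequality $\ell(y_B^+)<\ell(y_A^+)$ for their longest counterparts in $\fS_\la y\fS_\mu$; this is needed so that the exponent $\ell(y_B^+)-\ell(y_A^+)$ is strictly negative and the degree estimate for $P_{y_B^+,y_A^+}$ actually forces the coefficient into $v^{-1}\mbz[v^{-1}]$. This is the usual compatibility of Bruhat order with parabolic quotients in the affine symmetric group and can be invoked from standard references, after which assembling the proof is routine.
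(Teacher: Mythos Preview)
Your proposal is correct and follows essentially the same route as the paper's proof: expand $\th_{\la,\mu}^d$ via Lemma~\ref{C'd +}, identify $\ti\phi_{\la,\mu}^y=[B]$ using Lemma~\ref{longest element}, check bar-invariance, and then invoke Lemma~\ref{Bruhat order and pr} together with the uniqueness characterization \eqref{th_Ar} of $\th_{A,r}$. The only difference is that you make explicit the Kazhdan--Lusztig degree bound that forces the off-diagonal coefficients into $v^{-1}\mbz[v^{-1}]$, whereas the paper leaves this implicit; note also that the ``main technical point'' you flag (that $y_B<y_A$ in $\afmsD_{\la,\mu}$ forces $y_B^+<y_A^+$) is already encoded in Lemma~\ref{C'd +}, since $P_{y_B^+,y_A^+}\neq 0$ requires $y_B^+\leq y_A^+$ and distinct double cosets have distinct longest elements.
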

\begin{proof}
By definition we have $\ol{\th_{\la,\mu}^d}=\th_{\la,\mu}^d$. Furthermore, by Lemma \ref{C'd +} we
conclude that
\begin{equation}\label{thlamud}
\th_{\la,\mu}^d=\sum_{x\in\afmsD_{\la,\mu}\atop x\leq d}v^{\ell(x^+)-\ell(d^+)}P_{x^+,d^+}\ti{\phi_{\la,\mu}^x},
\end{equation}
where $\ti{\phi_{\la,\mu}^x}=v^{\ell(w_{0,\mu})-\ell(x^+)}\phi_{\la,\mu}^x$.
In addition, by Lemma \ref{longest element} we have $[B]=\ti{\phi_{\la,\mu}^x}$ for $B\in\afThnr$ with $B=\jmath_\vtg(\la,x,\mu)$. Consequently, by Lemma \ref{Bruhat order and pr}
and the uniqueness of $\th_{A,r}$ we conclude that $\th_{A,r}=\th_{\la,\mu}^d$. The assertion follows.
\end{proof}

\section{Connection between $\bfBr$ and $\bfBNap$}

\subsection{}
Let $\bfU(\afsl)$ be the $\mbq(\up)$-subalgebra of $\dbfHa$ generated by the elements $u^+_{\afE_{i,i+1}}$, $u_{\afE_{i+1,i}}^-$ and $\ti K_i^{\pm 1}$ for $i\in I$. Then $\bfU(\afsl)$ is isomorphic to quantum affine $\afsl$. Let $\bfU(\afsl)^+$ be the  $\mbq(\up)$-subalgebra of $\bfU(\afsl)$ generated by the elements $u^+_{\afE_{i,i+1}}$ for $i\in I$.
Let $\afUslp$ be the $\sZ$-subalgebra of
$\bfU(\afsl)^+$ generated by $\ti u_{m\afE_{i,i+1}}^+$ for $i\in I$ and $m\in\mbn$.
The algebra $U(\afsl)^+_\sZ$ is the $\sZ$-form of $\bfU(\afsl)^+$.

Let $\dHap=\spann_\sZ\{\tu_A^+\mid A\in\afThnp\}$. Then $\dHap$ is a $\sZ$-subalgebra of $\dbfHa$ and $\afUslp$ is a proper subalgebra of $\dHap$.
According to \cite[Prop 7.5]{VV99}, there is a unique $\sZ$-basis
\begin{equation}\label{bfBn}
\bfBn:=\{\th_A^+\mid A\in\afThnp\}
\end{equation}
for $\dHap$ such that $\ol{\th_A^+}=\th_A^+$ and
\begin{equation}\label{aaa}
\th_A^+-\ti u_A^+\in\sum_{B\p A,\,B\in\afThnp\atop
\bfd(B)=\bfd(A)}\up^{-1}\mbz[\up^{-1}]\ti u_B^+.
\end{equation}
The set $\bfBn$ is called the canonical basis of $\dHap$. For $A,B\in\afThnp$ we write
\begin{equation}\label{fABC}
\th_A^+\th_B^+=\sum_{C\in\afThnp}
\sff_{A,B,C}\th_C^+,
\end{equation}
where $\sff_{A,B,C}\in\sZ$. Note that if $\sff_{A,B,C}\not=0$ then
$\bfd(C)=\bfd(A)+\bfd(B)$.

A matrix $A=(a_{i,j})\in\afThn$ is said to be aperiodic if for every integer $l\neq0$ there exists $1\leq i\leq n$ such that $a_{i,i+l}=0$. Let $\afThnap$ be the set of all aperiodic matrices in $\afThn$. Let
$\afThnpap=\afThnp\cap\afThnap$.

By Lusztig \cite{Lu92} we know that the set
\begin{equation}\label{bfBnap}
\bfBn^{\text{ap}}:=\{\th_A^+\mid A\in\afThnpap\}
\end{equation}
forms a $\sZ$-basis for $\afUslp$ and is called the canonical basis of $\afUslp$.
The following positivity result for $\afUslp$ was proved by Lusztig.
\begin{Thm}[{\cite[14.4.13]{Lubk}}]\label{positive affine sln}
For $A,B,C\in\afThnpap$ we have $\sff_{A,B,C}\in\mbn[\up,\up^{-1}]$.
\end{Thm}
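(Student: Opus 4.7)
The plan is to follow Lusztig's geometric proof via perverse sheaves on representation varieties of the cyclic quiver $\tri$. First I would set up the geometry: for each dimension vector $\mathbf{d} \in \afmbnn$, let $E_{\mathbf{d}}$ denote the variety of nilpotent representations of $\tri$ with dimension vector $\mathbf{d}$, acted on by $G_{\mathbf{d}} = \prod_{i \in I} GL_{d_i}$. The $G_{\mathbf{d}}$-orbits are in bijection with $\{A \in \afThnp \mid \bfd(A) = \mathbf{d}\}$; write $\mathcal{O}_A$ for the orbit attached to $A$ and $\mathrm{IC}_A := \mathrm{IC}(\overline{\mathcal{O}_A})$ for the corresponding intersection cohomology complex.

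Next I would introduce Lusztig's category $\mathcal{P}_{\mathbf{d}}$ of $G_{\mathbf{d}}$-equivariant semisimple perverse sheaves on $E_{\mathbf{d}}$ generated under shifts and direct summands by the Lusztig sheaves (proper pushforwards of constant sheaves from appropriate flag-type varieties). The key identification, due to Lusztig, is that the simple objects in $\bigcup_{\mathbf{d}} \mathcal{P}_{\mathbf{d}}$ are precisely the $\mathrm{IC}_A$ with $A \in \afThnpap$, and that the $\sZ$-module $\bigoplus_{\mathbf{d}} K(\mathcal{P}_{\mathbf{d}})$ is isomorphic as a $\sZ$-algebra to $\afUslp$ via $[\mathrm{IC}_A] \longmapsto \th_A^+$ (with a suitable normalisation). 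Multiplication on the geometric side is implemented by the induction bifunctor $\mathrm{Ind}: \mathcal{P}_{\mathbf{d}_1} \times \mathcal{P}_{\mathbf{d}_2} \to \mathcal{P}_{\mathbf{d}_1 + \mathbf{d}_2}$, defined through the standard correspondence $E_{\mathbf{d}_1} \times E_{\mathbf{d}_2} \xleftarrow{q} F \xrightarrow{p} E_{\mathbf{d}_1 + \mathbf{d}_2}$ in which $p$ is proper and $q$ is smooth (a vector bundle over a principal $G$-torsor).

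With this dictionary in hand, the positivity is a direct consequence of the Beilinson--Bernstein--Deligne decomposition theorem applied to the proper map $p$. For $A,B \in \afThnpap$ it yields a decomposition
\[
\mathrm{Ind}(\mathrm{IC}_A \boxtimes \mathrm{IC}_B) \;\simeq\; \bigoplus_{C \in \afThnpap,\, k \in \mbz} \mathrm{IC}_C[k]^{\oplus m^k_{A,B,C}}
\]
with $m^k_{A,B,C} \in \mbn$. Passing to Grothendieck classes converts each shift $[1]$ into multiplication by $\up$ (together with the appropriate Tate twist built into the normalisation), so that $\sff_{A,B,C} = \sum_k m^k_{A,B,C} \up^k \in \mbn[\up,\up^{-1}]$, as desired.

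The main obstacle, and the feature genuinely new to affine type, is the identification of the simple objects of $\mathcal{P}_{\mathbf{d}}$ with the aperiodic $\mathrm{IC}_A$ — equivalently, the fact that $\mathrm{Ind}(\mathrm{IC}_A \boxtimes \mathrm{IC}_B)$ involves \emph{only} aperiodic $\mathrm{IC}_C$ as summands. Unlike the finite-type case, where every $G_{\mathbf{d}}$-equivariant simple perverse sheaf on the nilpotent variety automatically lies in $\mathcal{P}_{\mathbf{d}}$, here one must rule out the non-aperiodic IC sheaves, which requires Lusztig's careful combinatorial analysis of nilpotent representations of $\tri$ combined with the Fourier--Deligne transform. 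Once this aperiodicity theorem is in place, the positivity statement itself is essentially formal from BBD.
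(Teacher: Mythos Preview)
Your sketch is correct and is precisely Lusztig's geometric argument via perverse sheaves on nilpotent representation varieties of the cyclic quiver, together with the BBD decomposition theorem and his aperiodicity result. The paper does not supply its own proof of this statement: it is quoted as a known theorem of Lusztig \cite[14.4.13]{Lubk} and used as a black box, so there is nothing further to compare.
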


\subsection{}
Let
$\dHaz$ be the $\sZ$-subalgebra of $ \dbfHa$
generated by $K_i^{\pm1}$ and $\leb{K_i;0\atop t}\rib$ for $1\leq i\leq n$ and $t>0$, where
$\big[ {K_i;0 \atop t} \big] =\prod_{s=1}^t \frac
{K_i\up^{-s+1}-K_i^{-1}\up^{s-1}}{\up^s-\up^{-s}}.$
Let $\dHapz=\dHap\dHaz$. Then $\dHapz$ is a $\sZ$-subalgebra of
$\dHa$.

Recall the map $\zr$ defined in Theorem \ref{zr}.
Let $\afSrpz$ be the $\sZ$-submodule of $\afSr$ spanned by the elements
$A(\bfl,r)[\diag(\la)]$ for $A\in\afThnp$ and $\la\in\afLa(n,r)$. Since $\afSrpz=\zr(\dHapz)$, we conclude that $\afSrpz$ is a $\sZ$-subalgebra of $\afSr$. The algebra $\afSrpz$ is called a Borel subalgebra of $\afSr$.

\begin{Lem}
The set $\{\th_{A+\diag(\la),r}\mid A\in\afThnp,\,\la\in\afLa(n,r-\sg(A))\}$
forms a $\sZ$-basis of $\afSrpz$.
\end{Lem}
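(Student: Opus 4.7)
The plan is to first identify $\afSrpz$ with the $\sZ$-span of the standard basis elements $[C]$ indexed by upper triangular matrices $C\in\afThnr$, and then use the triangularity property \eqref{th_Ar} of the canonical basis to replace each $[C]$ by $\th_{C,r}$.

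First I would analyze the generators $A(\bfl,r)[\diag(\la)]$ of $\afSrpz$. Under the isomorphism ${\mathfrak h}:\End_{\afHr}\bigl(\bop_\mu x_\mu\afHr\bigr)\cong\afSr$, the element $[\diag(\la)]$ is the projector onto the summand $x_\la\afHr$, so that $[B][\diag(\la)]=\dt_{\co(B),\la}[B]$ for every $B\in\afThnr$. Combined with the expansion $A(\bfl,r)=\sum_{\mu\in\afLa(n,r-\sg(A))}[A+\diag(\mu)]$ (here $\bfl=\mathbf{0}$, so the powers of $\up$ in the definition of $A(\bfj,r)$ disappear) and the identity $\co(A+\diag(\mu))=\co(A)+\mu$, one sees that $A(\bfl,r)[\diag(\la)]$ equals $[A+\diag(\la-\co(A))]$ when $\la-\co(A)\in\afLa(n,r-\sg(A))$, and vanishes otherwise. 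Since every upper triangular $C\in\afThnr$ decomposes uniquely as $C=A+\diag(\nu)$ with $A\in\afThnp$ its strictly upper triangular part and $\nu\in\afLa(n,r-\sg(A))$ its diagonal, and since $\{[C]\mid C\in\afThnr\}$ is a $\sZ$-basis of $\afSr$, this shows that $\afSrpz$ has $\sZ$-basis $\{[A+\diag(\la)]\mid A\in\afThnp,\,\la\in\afLa(n,r-\sg(A))\}$.

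Next I would verify that if $C=A+\diag(\la)$ is upper triangular and $B\sqsubset C$ in $\afThnr$, then $B$ is also upper triangular. For $i>j$ every pair $(s,t)$ with $s\geq i$ and $t\leq j$ satisfies $s>t$, and so lies strictly below the diagonal where $C$ vanishes; hence $\sg_{i,j}(C)=0$. Then $0\leq\sg_{i,j}(B)\leq\sg_{i,j}(C)=0$, and because $B$ has nonnegative entries, taking $(i,j)=(s,t)$ for any $s>t$ forces $B_{s,t}=0$. Consequently \eqref{th_Ar} expresses $\th_{A+\diag(\la),r}$ as $[A+\diag(\la)]$ plus a $\sZ$-linear combination of terms $[B]$ again indexed by upper triangular matrices, so the transition matrix between the families $\{\th_{A+\diag(\la),r}\}$ and $\{[A+\diag(\la)]\}$ (with the same index set and the partial order $\sqsubseteq$) is unitriangular over $\sZ$. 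Both families therefore span the same $\sZ$-submodule, namely $\afSrpz$, and the linear independence of $\{\th_{A+\diag(\la),r}\}$ is automatic as a subset of the canonical basis $\bfBr$ of $\afSr$.

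The only delicate point is the preservation of upper triangularity under $\sqsubset$, which relies crucially on the nonnegativity of off-diagonal entries of elements of $\afThnr$ to pass from the vanishing of the partial sums $\sg_{i,j}$ for $i>j$ to the vanishing of individual entries below the diagonal.
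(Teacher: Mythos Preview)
Your argument is correct and follows essentially the same route as the paper: first identify $\afSrpz$ with the $\sZ$-span of the upper triangular standard basis elements $\{[A+\diag(\la)]\}$, and then use the unitriangularity \eqref{th_Ar} together with the observation that $\sqsubset$ preserves upper triangularity. The paper compresses both of these steps into a single sentence (``by definition'' and an implicit use of the fact that $B\sqsubset A+\diag(\la)$ forces $B$ to be upper triangular), so your version simply unpacks what the paper takes for granted.
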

\begin{proof}
By definition the set $\{[A+\diag(\la)]\mid|A\in\afThnp,\,\la\in\afLa(n,r-\sg(A))\}$ forms a $\sZ$-basis of $\afSrpz$. Furthermore, by \eqref{th_Ar}, for $A\in\afThnp$ and $\la\in\afLa(n,r-\sg(A))$,
we have
\begin{equation*}
\th_{A+\diag(\la),r}-[A+\diag(\la)]\in
\sum_{B\in\afThnp,\,\mu\in\afLa(n,r-\sg(B))
\atop B+\diag(\mu)\sqsubset A+\diag(\la)}\sZ[B+\diag(\mu)].
\end{equation*}
The assertion follows.
\end{proof}

According to \cite[7.7(2) and 7.9]{DF14} we have the following result (see also \cite[3.7]{Fu14}).
\begin{Lem}\label{zr(thA+)}
For $A\in\afThnp$ we have $\zr(\th_A^+)=\sum_{\mu\in\afLa(n,r-\sg(A))}\th_{A+\diag(\mu),r}$. In particular we have
\begin{equation*}
[\diag(\la)]\zr(\th_A^+)=
\begin{cases}
\th_{A+\diag(\la-\ro(A)),r}&\text{if $\la-\ro(A)\in\afmbnn$}\\
0 &\text{otherwise.}
\end{cases}
\end{equation*}
and
\begin{equation*}
\zr(\th_A^+)[\diag(\la)]=
\begin{cases}
\th_{A+\diag(\la-\co(A)),r}&\text{if $\la-\co(A)\in\afmbnn$}\\
0 &\text{otherwise.}
\end{cases}
 \end{equation*}
for $\la\in\afLanr$.
\end{Lem}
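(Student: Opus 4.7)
The strategy is to identify $\zr(\th_A^+)$ with $\sum_\mu \th_{A+\diag(\mu),r}$ by invoking the uniqueness in the canonical-basis characterization \eqref{th_Ar}. First I would check that $\zr$ intertwines the bar involutions on $\dHap$ and $\afSr$---a standard compatibility of the affine Schur--Weyl setup discussed in \cite{DF14}---so that $\zr(\th_A^+)$ is bar-invariant, while each $\th_{A+\diag(\mu),r}$ is bar-invariant by definition. It therefore suffices to show that both sides agree in the PBW-type basis $\{[C]\mid C\in\afThnr\}$ modulo the triangular ambiguity permitted by \eqref{th_Ar}.

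Applying $\zr$ to \eqref{aaa} and invoking Theorem \ref{zr} yields
\[
\zr(\th_A^+) = \sum_{\mu\in\afLa(n,r-\sg(A))}[A+\diag(\mu)] + \sum_{B\p A,\,\bfd(B)=\bfd(A)} p_{A,B}\sum_{\nu\in\afLa(n,r-\sg(B))}[B+\diag(\nu)],
\]
where $p_{A,B}\in\up^{-1}\mbz[\up^{-1}]$ (the inner sum being empty whenever $\sg(B)>r$). The central task is to locate each lower PBW term $[B+\diag(\nu)]$ below some $[A+\diag(\mu)]$ in the $\sqsubset$-order.

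To that end I would use two combinatorial properties of the degeneration order $\p$ on $\afThnp$. First, $\bfd(B)=\bfd(A)$ forces $\ro(B)-\co(B)=\ro(A)-\co(A)$, since both sides are linear in the matrix and vanish on the chain relations $\afE_{i,j}-\afE_{i,k}-\afE_{k,j}$ that generate $\ker(\bfd)$. Second, $B\p A$ forces $\ro(B)\geq\ro(A)$ pointwise, because an elementary degeneration splitting $M^{i,j}$ into $M^{i,k}\oplus M^{k,j}$ strictly increases the count of indecomposable summands with top $S_{\overline k}$ while leaving the other top-counts unchanged. Given these, setting $\mu:=\nu+\ro(B)-\ro(A)=\nu+\co(B)-\co(A)$ produces an element of $\afLa(n,r-\sg(A))$ (both non-negativity and $\sg(\mu)=r-\sg(A)$ being automatic), and the matrices $A+\diag(\mu)$ and $B+\diag(\nu)$ share row- and column-sum vectors. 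Since $\sg_{i,j}$ for $i\neq j$ depends only on off-diagonal entries, $B\p A$ now delivers $B+\diag(\nu)\sqsubset A+\diag(\mu)$.

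With this matching established, the difference $D:=\zr(\th_A^+)-\sum_\mu\th_{A+\diag(\mu),r}$ is bar-invariant and, expanded in the PBW basis, has all coefficients in $\up^{-1}\mbz[\up^{-1}]$. Converting to the canonical basis $\{\th_{C,r}\}$ via the unipotent upper-triangular base change---which preserves the class of expansions with coefficients in $\up^{-1}\mbz[\up^{-1}]$---and invoking the bar-invariance of $D$ and of each $\th_{C,r}$, every coefficient ends up in $\up^{-1}\mbz[\up^{-1}]\cap\up\mbz[\up]=\{0\}$, forcing $D=0$. The remaining identities follow immediately: since $\th_{A+\diag(\mu),r}$ has row-type $\ro(A)+\mu$ and column-type $\co(A)+\mu$, the diagonal idempotent satisfies $[\diag(\la)]\th_{A+\diag(\mu),r}=\dt_{\la,\,\ro(A)+\mu}\th_{A+\diag(\mu),r}$ and symmetrically on the right with $\co(A)+\mu$; substituting into the formula for $\zr(\th_A^+)$ yields the second and third assertions. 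The principal obstacle is the combinatorial lemma of paragraph three---especially the pointwise bound $\ro(B)\geq\ro(A)$ for $B\p A$---which needs to be extracted carefully from the order-theoretic analysis of cyclic-quiver degenerations in \cite[6.1]{DF10} and \cite{Ri93}.
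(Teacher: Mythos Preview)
The paper does not actually prove this lemma; it simply cites \cite[7.7(2) and 7.9]{DF14} and \cite[3.7]{Fu14}. Your proposal is therefore not to be compared against an argument in the present paper but against those external sources, and the strategy you outline---bar-compatibility of $\zr$, expansion via \eqref{aaa} and Theorem~\ref{zr}, then the standard ``bar-invariant with coefficients in $\up^{-1}\mbz[\up^{-1}]$ forces zero'' trick---is exactly the mechanism behind those references. So your approach is correct and on the expected track.

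One remark on economy: the combinatorial matching in your third paragraph is more than the argument requires. For the final step you only need that $D$, expanded in the PBW basis $\{[C]\}$, has all coefficients in $\up^{-1}\mbz[\up^{-1}]$. This is immediate once you observe that the leading terms $[A+\diag(\mu)]$ occur with coefficient~$1$ on both sides and cannot reappear among the lower terms (on the $\zr(\th_A^+)$ side because $B\p A$ forces the off-diagonal parts to differ; on the $\sum_\mu\th_{A+\diag(\mu),r}$ side because $A+\diag(\mu')\sqsubset A+\diag(\mu)$ would force $\mu'=\mu$ via the row-sum condition). You never need to exhibit, for each $[B+\diag(\nu)]$, a specific $[A+\diag(\mu)]$ dominating it. In particular the delicate inequality $\ro(B)\geq\ro(A)$ for $B\p A$ with $\bfd(B)=\bfd(A)$---which, as you note, rests on identifying the combinatorial order $\p$ with the geometric degeneration order on nilpotent cyclic-quiver representations and then invoking upper semicontinuity of $\dim\Hom(-,S_k)$---can be dropped entirely. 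The argument is then short and self-contained.
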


For $A,B\in\afThnr$
we write
\begin{equation}\label{gABCr}
\th_{A,r}\th_{B,r}=\sum_{C\in\afThnr}\g_{A,B,C,r}\th_{C,r}
\end{equation}
where $\g_{A,B,C,r}\in\sZ$. If $\g_{A,B,C,r}\not=0$ then we have
$\co(A)=\ro(B)$, $\ro(A)=\ro(C)$ and $\co(B)=\co(C)$.

\begin{Lem}\label{ke3}
Let $A,B\in\afThnp$, $\la\in\afLa(n,r-\sg(A))$ and $\mu\in\afLa(n,r-\sg(B))$. If $\co(A)+\la=\ro(B)+\mu$ then
we have $$\g_{A+\diag(\la),B+\diag(\mu),C',r}=
\begin{cases}
\sff_{A,B,C}&\text{ if $C'=C+\diag(\la+\ro(A-C))$ for some $C\in\afThnp$},\\
0&\text{ otherwise.}
\end{cases}
$$
for $C'\in\afThnr$, where $\sff_{A,B,C}$ is as given in \eqref{fABC}.
\end{Lem}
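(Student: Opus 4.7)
The plan is to lift the product on the left-hand side from $\afSr$ back to $\dHap$ via the algebra homomorphism $\zr$ and Lemma \ref{zr(thA+)}, apply the multiplication formula \eqref{fABC} there, and then push forward again, using the idempotents $[\diag(\nu)]$ ($\nu\in\afLanr$) in $\afSr$ to single out the correct diagonal contributions.

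First, observe that both $\la+\ro(A)$ and $\mu+\co(B)$ belong to $\afLanr$ (each has weight $r$), so by Lemma \ref{zr(thA+)},
\[
\th_{A+\diag(\la),r}=[\diag(\la+\ro(A))]\,\zr(\th_A^+),\qquad
\th_{B+\diag(\mu),r}=\zr(\th_B^+)\,[\diag(\mu+\co(B))].
\]
Since $\zr$ is an algebra homomorphism by Theorem \ref{zr} and $\th_A^+\th_B^+=\sum_{C\in\afThnp}\sff_{A,B,C}\,\th_C^+$ by \eqref{fABC}, the product expands as
\[
\th_{A+\diag(\la),r}\,\th_{B+\diag(\mu),r}=\sum_{C\in\afThnp}\sff_{A,B,C}\,[\diag(\la+\ro(A))]\,\zr(\th_C^+)\,[\diag(\mu+\co(B))].
\]

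Next, I would apply Lemma \ref{zr(thA+)} once more to each factor $\zr(\th_C^+)$. The left idempotent forces the only surviving summand to be $\th_{C+\diag(\la+\ro(A-C)),r}$, provided $\la+\ro(A-C)\in\afmbnn$; the subsequent right idempotent leaves this element unchanged exactly when $\co(C)+\la+\ro(A-C)=\mu+\co(B)$. Using the hypothesis $\la-\mu=\ro(B)-\co(A)$, this compatibility condition simplifies to
\[
\ro(C)-\co(C)=\bigl(\ro(A)-\co(A)\bigr)+\bigl(\ro(B)-\co(B)\bigr).
\]

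The main point to verify is that this compatibility is automatic whenever $\sff_{A,B,C}\neq0$. The remark following \eqref{fABC} already gives $\bfd(C)=\bfd(A)+\bfd(B)$; it remains to note that $\ro-\co$ factors through $\bfd$ on $\afThnp$. Indeed, a direct check on the generators $\afE_{i,j}$ yields $\ro(\afE_{i,j})-\co(\afE_{i,j})=\partial\,\bfd(\afE_{i,j})$, where $\partial\colon\afmbzn\to\afmbzn$ is the linear difference operator $(\partial v)_k=v_k-v_{k+1}$ (a telescoping identity: $\bfd(\afE_{i,j})$ contributes at each vertex $k$ with $i\le k<j$, while $\ro(\afE_{i,j})-\co(\afE_{i,j})$ is supported at $k\equiv i$ with value $+1$ and $k\equiv j$ with value $-1$). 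By linearity, $(\ro-\co)(D)=\partial\bfd(D)$ for every $D\in\afThnp$, and applying $\partial$ to the dimension-vector identity yields the desired compatibility. Assembling the pieces,
\[
\th_{A+\diag(\la),r}\,\th_{B+\diag(\mu),r}=\sum_{\substack{C\in\afThnp\\ \la+\ro(A-C)\in\afmbnn}}\sff_{A,B,C}\,\th_{C+\diag(\la+\ro(A-C)),r},
\]
and the formula follows by comparing with \eqref{gABCr} via the $\sZ$-linear independence of $\bfBr$. The main (mild) obstacle is the compatibility check in the previous paragraph, which however becomes routine once one recognizes $\ro-\co$ as the image of $\bfd$ under a single linear map.
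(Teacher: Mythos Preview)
Your proof is correct and follows exactly the paper's approach: lift via Lemma~\ref{zr(thA+)} to write the product as $[\diag(\la+\ro(A))]\,\zr(\th_A^+\th_B^+)\,[\diag(\mu+\co(B))]$, expand with \eqref{fABC}, and check that the right idempotent acts trivially once the left one has been applied, using $\bfd(C)=\bfd(A)+\bfd(B)$. One small slip: with the paper's conventions (arrows $i\to i+1$, $M^{i,j}$ has top $S_i$) the correct difference operator is $(\partial v)_k=v_k-v_{k-1}$, not $v_k-v_{k+1}$; this is immaterial, since all you actually need is that $\ro-\co$ factors linearly through $\bfd$, which is exactly how the paper states it (``$\ro(C)-\co(C)=\ro(A+B)-\co(A+B)$'').
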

\begin{proof}
By Lemma
\ref{zr(thA+)}  we have
\begin{equation*}
\begin{split}
\th_{A+\diag(\la),r}\th_{B+\diag(\mu),r}
&=[\diag(\la+\ro(A))]\zr(\th_A^+)\zr(\th_B^+)[\diag(\mu+\co(B))]\\
&=\sum_{C\in\afThnp,\,\bfd(C)=\bfd(A)+\bfd(B)
\atop \la+\ro(A)-\ro(C)\in\afmbnn
}\sff_{A,B,C}\th_{C+\diag(\la+\ro(A)-\ro(C)),r}
[\diag(\mu+\co(B))].
\end{split}
\end{equation*}
If $\bfd(C)=\bfd(A)+\bfd(B)$ then we have
$\ro(C)-\co(C)=\ro(A+B)-\co(A+B)$ and hence
$\co(C)+\la+\ro(A)-\ro(C)=\la+\co(A+B)-\ro(B)=\mu+\co(B)$.
Thus we have
\begin{equation*}
\th_{A+\diag(\la),r}\th_{B+\diag(\mu),r}
=\sum_{C\in\afThnp,\,\bfd(C)=\bfd(A)+\bfd(B)
\atop \la+\ro(A)-\ro(C)\in\afmbnn
}\sff_{A,B,C}\th_{C+\diag(\la+\ro(A)-\ro(C)),r}.
\end{equation*}
The assertion follows.
\end{proof}

\subsection{}
For $m\in\mbz$ there is a map
\begin{equation}\label{etam}
\etam:\afThn\ra\afThn
\end{equation}
defined by sending $A=(a_{i,j})_{i,j\in\mbz}$ to $(a_{i,mn+j})_{i,j\in\mbz}$. Note that if $A=\jmath_\vtg(\la,d,\mu)\in\afThnr$ then $\etam(A)=\jmath_\vtg(\la,d\rho^{mr},\mu)\in\afThnr$.

\begin{Lem}\label{ke1}
Let $A\in\afThn$ and $m\in\mbz$.
If $a_{i,j}=0$ for $1\leq i\leq n$ and $j\leq mn$,
then $\etak(A)\in\afThnp$ for $k\leq m-1$.
\end{Lem}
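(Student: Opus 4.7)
The plan is a direct index-tracking computation, relying only on the periodicity $a_{i,j}=a_{i+n,j+n}$ and the definition of $\etak$ and $\afThnp$.

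First I would unpack what the hypothesis means for all rows (not just $1\le i\le n$). Writing any $i\in\mbz$ as $i=i_0+sn$ with $1\le i_0\le n$ and $s\in\mbz$, periodicity gives $a_{i,j}=a_{i_0,j-sn}$. So the assumption $a_{i_0,j'}=0$ for $j'\le mn$ translates into
\[
a_{i,j}=0\quad\text{whenever } j-sn\le mn,\text{ i.e., } j\le (m+s)n.
\]
Thus, for $i=i_0+sn$ with $1\le i_0\le n$, row $i$ of $A$ vanishes in all columns $j\le(m+s)n$.

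Next I would apply this to $\etak(A)=(b_{i,j})$ with $b_{i,j}=a_{i,kn+j}$. To verify $\etak(A)\in\afThnp$, I need $b_{i,j}=0$ whenever $i\ge j$. Fix $i=i_0+sn$, $1\le i_0\le n$. By the displayed vanishing, $b_{i,j}=a_{i,kn+j}=0$ as soon as $kn+j\le (m+s)n$, i.e.\ $j\le(m+s-k)n$. On the other hand, $i\ge j$ combined with $i_0\le n$ gives
\[
j\le i=i_0+sn\le(s+1)n.
\]
So it suffices to have $(s+1)n\le(m+s-k)n$, that is $k\le m-1$, which is exactly the hypothesis. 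Hence $b_{i,j}=0$ whenever $i\ge j$.

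Finally I would note the residual checks: $\etak(A)$ lies in $\afThn$ because $\etak(A)_{i+n,j+n}=a_{i+n,kn+j+n}=a_{i,kn+j}=\etak(A)_{i,j}$ by periodicity of $A$, and the row/column finiteness conditions transfer trivially since $\etak$ is a column-shift by $kn$. There is no real obstacle here; the only subtlety is keeping the indexing straight between the ``fundamental'' row range $1\le i_0\le n$ and an arbitrary row $i=i_0+sn$, which is what forces the bound $k\le m-1$ to appear precisely from the worst case $i_0=n$.
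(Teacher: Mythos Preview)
Your proof is correct and is essentially the same direct index computation as the paper's. The only difference is cosmetic: the paper restricts to rows $1\le i\le n$ (the periodicity of $\etak(A)$ then handles all other rows implicitly), whereas you explicitly carry the general row $i=i_0+sn$ through the argument, which makes the reduction via periodicity visible rather than tacit.
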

\begin{proof}
Let $B^{(k)}=\etak{(A)}$. If $k\leq m-1$, $1\leq i\leq n$ and $i\geq j$,
then $kn+j\leq (m-1)n+j\leq (m-1)n+i\leq mn$ and hence $b_{i,j}^{(k)}=a_{i,kn+j}=0$.
Thus $B^{(k)}\in\afThnp$ for $k\leq m-1$.
\end{proof}

\begin{Lem}\label{ke2}
Let $A\in\afThnr$ with $\la=\ro(A)$ and $\mu\in\co(A)$. Then we have
$\th_{A,r}\cdot\th_{\mu,\mu}^{\rho^{mr}}=\th_{\etam(A),r}
=\th_{\la,\la}^{\rho^{mr}}
\cdot\th_{A,r}$ for $m\in\mbz$.
\end{Lem}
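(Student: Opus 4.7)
The plan is to work inside $\End_{\afHr}\bigl(\bop_\la x_\la\afHr\bigr)$ via the isomorphism ${\mathfrak h}$ and exploit the centrality of $T_\rho^r$ in $\afHr$. A direct check gives $\rho s_i\rho^{-1}=s_{i+1}$ in $\affSr$ (with indices taken mod $r$ in $W_r$), so $\rho^r$ centralizes each Coxeter generator of $W_r$ and hence lies in $Z(\affSr)$; combined with $\ell(\rho^a w)=\ell(w)$ for $w\in W_r$, this yields $T_\rho^{mr}=T_{\rho^{mr}}\in Z(\afHr)$ for all $m\in\mbz$.

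Since $\rho^{mr}$ has length zero and commutes with $\fS_\mu$, the double coset $\fS_\mu\rho^{mr}\fS_\mu$ equals $\rho^{mr}\fS_\mu$, whose unique longest element is $(\rho^{mr})^+=\rho^{mr}w_{0,\mu}$ of length $\ell(w_{0,\mu})$. Using the definition $C'_{\rho^a y}=T_\rho^a C'_y$ for $y\in W_r$ together with $C'_{w_{0,\mu}}=\up^{-\ell(w_{0,\mu})}x_\mu$, the definition of $\th_{\mu,\mu}^{\rho^{mr}}$ gives $\th_{\mu,\mu}^{\rho^{mr}}(x_\nu h)=\dt_{\mu\nu}T_\rho^{mr}x_\mu h$. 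Then invoking Proposition \ref{KL-basis} to write $\th_{A,r}=\th_{\la,\mu}^{y_A}$, and using centrality to commute $T_\rho^{mr}$ past $x_\mu$ together with right $\afHr$-linearity of $\th_{A,r}$, one computes
\[
(\th_{A,r}\cdot\th_{\mu,\mu}^{\rho^{mr}})(x_\nu h)
=\dt_{\mu\nu}\,\th_{A,r}(x_\mu)\,T_\rho^{mr}h
=\dt_{\mu\nu}\,\up^{\ell(w_{0,\mu})}\,C'_{y_A^+}T_\rho^{mr}h.
\]

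To conclude, I would identify $C'_{y_A^+}T_\rho^{mr}$ with $C'_{y_{\etam(A)}^+}$: writing $y_A^+=\rho^a w$ with $w\in W_r$, centrality yields $C'_{y_A^+}T_\rho^{mr}=T_\rho^{mr+a}C'_w=C'_{y_A^+\rho^{mr}}$; and by the remark following \eqref{etam} we have $y_{\etam(A)}=y_A\rho^{mr}$, so $\fS_\la y_{\etam(A)}\fS_\mu=\fS_\la y_A\fS_\mu\rho^{mr}$ has longest element $y_A^+\rho^{mr}$ (shifting by the central length-zero element $\rho^{mr}$ preserves lengths), whence $y_{\etam(A)}^+=y_A^+\rho^{mr}$. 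Since $\ro$ and $\co$ are preserved by $\etam$, the computation matches $\th_{\etam(A),r}(x_\nu h)$ term-by-term, proving the first equality. The second equality $\th_{\la,\la}^{\rho^{mr}}\cdot\th_{A,r}=\th_{\etam(A),r}$ follows by the mirror computation: $\th_{\la,\la}^{\rho^{mr}}$ acts on $C'_{y_A^+}h\in x_\la\afHr$ as left multiplication by $T_\rho^{mr}$, and centrality turns this into right multiplication, again producing $C'_{y_{\etam(A)}^+}h$. The only real technical point is the centrality of $\rho^r$ in $\affSr$ and the double-coset identity $\fS_\la y_A\rho^{mr}\fS_\mu=\fS_\la y_A\fS_\mu\rho^{mr}$; both are routine, so no deep obstacle is expected.
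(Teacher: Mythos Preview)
Your argument is correct and follows essentially the same route as the paper's proof for the first equality: both use Proposition~\ref{KL-basis}, the centrality of $\rho^{mr}$ (hence of $T_\rho^{mr}$), and the resulting identification $y_{\etam(A)}^+=y_A^+\rho^{mr}$ via $\fS_\la y_A\rho^{mr}\fS_\mu=\fS_\la y_A\fS_\mu\,\rho^{mr}$.

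The only notable difference is in the second equality. You redo the computation directly, observing that $\th_{\la,\la}^{\rho^{mr}}$ acts on $x_\la\afHr$ as left multiplication by $T_\rho^{mr}$, which by centrality equals right multiplication and hence reproduces $C'_{y_{\etam(A)}^+}$. The paper instead derives the second equality from the first by a symmetry trick: applying the already-proved identity to ${}^t\!A$ with $m$ replaced by $-m$ gives $\th_{\mu,\la}^{d^{-1}}\cdot\th_{\la,\la}^{\rho^{-mr}}=\th_{\mu,\la}^{d^{-1}\rho^{-mr}}$, and then the anti-involution $\tau_r$ from~\eqref{taur} (which sends $[A]\mapsto[{}^t\!A]$ and hence $\th_{A,r}\mapsto\th_{{}^t\!A,r}$) transports this to the desired statement. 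Your direct approach is perfectly valid and arguably more transparent; the paper's use of $\tau_r$ is a small economy that avoids repeating the calculation.
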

\begin{proof}
Note that $C'_{w_{0,\mu}}=\up^{-\ell(w_{0,\mu})}x_\mu$.
Since $\rho^rx=x\rho^r$ for $x\in\affSr$ we have $\fS_\mu\rho^{mr}\fS_\mu=\fS_\mu\fS_\mu\rho^{mr}=\fS_\mu\rho^{mr}$.
It follows that $w_{0,\mu} {\rho^{mr}}$ is the longest element in $\fS_\mu\rho^{mr}\fS_\mu$. This together with Proposition \ref{KL-basis} implies that
\begin{equation}\label{eq1 ke2}
\th_{A,r}\th_{\mu,\mu}^{\rho^{mr}}(C'_{w_{0,\mu}})
=\th_{A,r}(C'_{w_{0,\mu}\cdot{\rho^{mr}}})=
\th_{A,r}(C'_{w_{0,\mu}}T_\rho^{mr})=C'_{d^+\rho^{mr}},
\end{equation}
where $d\in\afmsD_{\la,\mu}$ is such that $\jmath_\vtg(\la,d,\mu)=A$
and $d^+$ is the unique longest element in $\fS_\la d\fS_\mu$. Furthermore since $\fS_\la d\rho^{mr}\fS_\mu=\fS_\la d\fS_\mu\rho^{mr}$, we see that $d^+{\rho^{mr}}$ is the longest element in $\fS_\la d\rho^{mr}\fS_\mu$. It follows from \eqref{eq1 ke2} that
$$\th_{\etam(A),r}(C'_{w_{0,\mu}})=\th_{\la,\mu}^{d\rho^{mr}}
(C'_{w_{0,\mu}})=C'_{d^+{\rho^{mr}}}
=\th_{A,r}\th_{\mu,\mu}^{\rho^{mr}}(C'_{w_{0,\mu}}).$$
Thus we have $\th_{A,r}\cdot\th_{\mu,\mu}^{\rho^{mr}}=\th_{\etam(A),r}$.
This implies that $\th_{\mu,\la}^{d^{-1}}\cdot\th_{\la,\la}^{\rho^{-mr}}=\th_{\mu,\la}^{d^{-1}\rho^{-mr}}$.
Applying the map $\tau_r$ given in \eqref{taur}, we get
$\th_{\la,\la}^{\rho^{mr}}
\cdot\th_{A,r}=\tau_r(\th_{\mu,\la}^{d^{-1}}\cdot\th_{\la,\la}^{\rho^{-mr}})
=\tau_r(\th_{\mu,\la}^{d^{-1}\rho^{-mr}})=\th_{\etam(A),r}$.
\end{proof}

Assume $N\geq n$. There is a natural injective map
$$\ti{\,\,}:\afThn\lra\Th_\vtg(N),\quad A=(a_{i,j})\longmapsto\ti
A=(\ti a_{i,j}),$$ where $\ti A=(\ti a_{i,j})$
is defined by
\begin{equation*}\label{AtoAtilde}
\ti a_{k,l+mN}=\begin{cases} a_{k,l+mn}, &\text{ if }1\leq k,l\leq n;\\
0, &\text{ if either }n< k\leq N\text{ or }n< l\leq N\end{cases}
\end{equation*}
for  $m\in\mbz$. Note that the map $\ti{\,\,}:\afThn\lra\Th_\vtg(N)$ induces a map from $\afThnp$ to $\afTh^+(N)$.
Similarly, there is an injective map
\begin{equation*}\label{tilde-map}
\ti\ :\mbz_\vtg^n\lra\mbz_\vtg^N,\;\;\la\longmapsto\ti\la,
\end{equation*}
 where $\ti\la_i=\la_i$ for $1\leq i\leq n $ and $\ti\la_i=0$ for
$n+1\leq i\leq N$.

It is easy to see that there is an injective
algebra homomorphism (not sending 1 to 1)
$$\iota_{n,N}:\afbfSr\lra\afbfSNr,\;\;[A]\longmapsto [\ti A]\;\;\text{for $A\in\afThnr$}$$
(see \cite[\S 4.1]{DDF}).

Let $\afThnrap=\afThnap\cap\afThnr$.
\begin{Lem}\label{prop iota}
Assume $N>n$. Then for $A\in\afThnr$ we have $\ti A\in\afThNrap$ and
$\iota_{n,N}(\th_{A,r})=\th_{\ti A,r}$. In particular we have
$\g_{A,B,C,r}=\g_{\ti A,\ti B,\ti C,r}$ for $A,B,C\in\afThnr$, where $\g_{A,B,C,r}$ is as given in \eqref{gABCr}.
\end{Lem}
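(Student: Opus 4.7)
The plan is to prove the three claims in order. Aperiodicity of $\ti A$ is immediate: since $N>n$, for any index $i$ with $n<i\leq N$ the defining formula makes $\ti a_{i,j}=0$ for all $j\in\mbz$, so $\ti a_{i,i+l}=0$ for every nonzero $l$, and thus $\ti A\in\afThNrap$.

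The substantive content is the identity $\iota_{n,N}(\th_{A,r})=\th_{\ti A,r}$, for which I would use the explicit Kazhdan--Lusztig expansion of Proposition \ref{KL-basis}. Writing $A=\jmath_\vtg(\la,y_A,\mu)$ with $\la=\ro(A)$ and $\mu=\co(A)$, direct inspection of the segments in \eqref{Rtla} gives $R_{i+kN}^{\ti\la}=R_{i+kn}^\la$ for $1\leq i\leq n$, $k\in\mbz$, and $R_j^{\ti\la}=\kong$ otherwise, with the analogous identities for $\ti\mu$. Comparing cardinalities then shows $\ti A=\jmath_\vtg(\ti\la,y_A,\ti\mu)$, hence $y_{\ti A}=y_A$; and since $\fS_{\ti\la}=\fS_\la$ and $\fS_{\ti\mu}=\fS_\mu$ as subgroups of $\affSr$ (the extra coordinates of $\ti\la,\ti\mu$ being zero), also $y_{\ti A}^+=y_A^+$. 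The crucial combinatorial step is to verify that $B\mapsto\ti B$ defines a bijection from $\{B\in\afThnr\mid B\Boleq A\}$ onto $\{B'\in\afThNr\mid B'\Boleq\ti A\}$ under which $y_B^+$ is sent to $y_{\ti B}^+$ with equal length. The forward direction is straightforward. The reverse direction is the main obstacle: given $B'\Boleq\ti A$, the equalities $\ro(B')=\ti{\ro(A)}$ and $\co(B')=\ti{\co(A)}$ force the $i$-th rows and columns of $B'$ to vanish for $n<i\leq N$, so $B'=\ti B$ for a unique $B\in\afThnr$, and $B\Boleq A$ is then easy to check. Applying $\iota_{n,N}$ term by term to the Kazhdan--Lusztig expansion of $\th_{A,r}$ and using $\iota_{n,N}([B])=[\ti B]$ recovers the corresponding expansion of $\th_{\ti A,r}$ from Proposition \ref{KL-basis}.

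The structure-constant identity then follows formally: applying the algebra homomorphism $\iota_{n,N}$ to \eqref{gABCr} yields $\th_{\ti A,r}\th_{\ti B,r}=\sum_{C\in\afThnr}\g_{A,B,C,r}\th_{\ti C,r}$, and comparing with the canonical basis expansion $\th_{\ti A,r}\th_{\ti B,r}=\sum_{C'\in\afThNr}\g_{\ti A,\ti B,C',r}\th_{C',r}$ gives $\g_{A,B,C,r}=\g_{\ti A,\ti B,\ti C,r}$ by linear independence of $\{\th_{C',r}\}$; as a by-product one also sees that $\g_{\ti A,\ti B,C',r}=0$ whenever $C'$ is not in the image of $\ti{\ }$.
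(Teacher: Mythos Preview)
Your proof is correct and follows essentially the same approach as the paper's, which simply cites Proposition~\ref{KL-basis} and the expansion~\eqref{thlamud}; you have filled in the details the paper leaves implicit. In particular, your identification $y_{\ti A}=y_A$, $\fS_{\ti\la}=\fS_\la$, $\fS_{\ti\mu}=\fS_\mu$, and the bijection $B\mapsto\ti B$ between $\{B\Boleq A\}$ and $\{B'\Boleq\ti A\}$ (with inverse forced by the vanishing row and column sums) are exactly what is needed to match the two Kazhdan--Lusztig expansions term by term.
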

\begin{proof}
The first assertion follows from the definition of $\ti A$.
The second assertion follows from Proposition \ref{KL-basis} and \eqref{thlamud}.
\end{proof}

Recall the map $\eta_m$ defined in \eqref{etam}.
The structure constants for the canonical basis $\bfBr=\{\th_{A,r}\mid A\in\afThnr\}$ of the affine quantum Schur algebra $\afbfSr$ and the canonical basis $\bfBNap=\{\th_A^+\mid A\in\afThNpap\}$ of $\afbfUslNp$ are related as follows.

\begin{Thm}\label{prop of canonical basis for affine q-Schur algebras}
Assume $N\geq n$. Let $A,B,C\in\afThnr$ and $C'\in\afThNr$.

$(1)$ We have
$$\g_{\ti{\etak(A)},\ti{\etak(B)},C',r}=
\begin{cases}
\g_{A,B,X,r}&\quad\text{if $C'=\etakt(X)$ for some $X\in\afThnr$}\\
0&\quad\text{otherwise}
\end{cases}$$
for $k\in\mbz$, where $\g_{A,B,X,r}$ is as given in \eqref{gABCr}.

$(2)$ If $N>n$ and $\co(A)=\ro(B)$, then there exist $k_0\in\mbz$ such that for $k\leq k_0$, $\ti{\etak(A)},\ti{\etak(B)},\ti{\etakt(C)}\in\afThNp\cap\afThNrap$ and
$\g_{A,B,C,r}=\sff_{\ti{\etak(A)},\ti{\etak(B)},\ti{\etakt(C)}}$, where
$\sff_{\ti{\etak(A)},\ti{\etak(B)},\ti{\etakt(C)}}$ is as given in \eqref{fABC}.
\end{Thm}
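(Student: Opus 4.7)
The plan is to derive Part (1) as a direct identity in the affine quantum Schur algebra via Lemma \ref{ke2}, and then combine it with Lemma \ref{ke3} (or equivalently with Lemma \ref{zr(thA+)}) to obtain Part (2) after shifting so deep that the relevant matrices land in $\afThNpap\cap\afThNr$.

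For Part (1), I would first apply Lemma \ref{ke2} to write
$$\th_{\etak(A),r}=\th_{A,r}\cdot\th^{\rho^{kr}}_{\co(A),\co(A)}\quad\text{and}\quad\th_{\etak(B),r}=\th^{\rho^{kr}}_{\ro(B),\ro(B)}\cdot\th_{B,r}.$$
Assuming $\co(A)=\ro(B)$ (otherwise both sides vanish), the two middle shift idempotents fuse to $\th^{\rho^{2kr}}_{\co(A),\co(A)}$; this composition follows from applying Lemma \ref{ke2} twice to the identity element $I_{\co(A)}$, since $\th^{\rho^{kr}}_{\co(A),\co(A)}=\th_{\etak(I_{\co(A)}),r}$. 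A further application of Lemma \ref{ke2} on the right then yields
$$\th_{\etak(A),r}\cdot\th_{\etak(B),r}=\th_{A,r}\cdot\th_{B,r}\cdot\th^{\rho^{2kr}}_{\co(B),\co(B)}=\sum_{X\in\afThnr}\g_{A,B,X,r}\,\th_{\etakt(X),r}.$$
Now I apply the algebra homomorphism $\iota_{n,N}$ from Lemma \ref{prop iota} (trivial if $N=n$), obtaining the same identity with tildes on every matrix. Since $X\mapsto\ti{\etakt(X)}$ is injective into $\afThNr$, reading off the coefficient of each $\th_{C',r}$ gives Part (1).

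For Part (2), since every row of $A$, $B$, $C$ has only finitely many nonzero entries I can pick $k_0\in\mbz$ so small that for all $k\le k_0$: (i) $\etak(A),\etak(B)\in\afThnp$ by Lemma \ref{ke1}, and (ii) $\etakt(C)\in\afThnp$ with identically zero diagonal (any entry of $C$ is shifted into a column whose index far exceeds its row index). Under the tilde embedding this places $\ti{\etak(A)},\ti{\etak(B)},\ti{\etakt(C)}$ in $\afThNp$ with preserved $\sg=r$, and the hypothesis $N>n$ makes rows $n{+}1,\dots,N$ of every tilde-embedded matrix identically zero, which delivers aperiodicity for free. All three matrices therefore belong to $\afThNpap\cap\afThNr$. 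I then invoke Lemma \ref{ke3} in the $N$-setting with $\la=\mu=0$ (forced by $\afLa(N,0)=\{0\}$) and with $\co(\ti{\etak(A)})=\ro(\ti{\etak(B)})$ (the tilde embedding respects $\ro$ and $\co$). For $C'=\ti{\etakt(C)}$, whose diagonal vanishes, the only $C\in\afThNp$ satisfying $C'=C+\diag(\ro(\ti{\etak(A)})-\ro(C))$ is $C=\ti{\etakt(C)}$, with the diagonal-matching reducing to $\ro(A)=\ro(C)$. Under that condition the lemma gives $\g_{\ti{\etak(A)},\ti{\etak(B)},\ti{\etakt(C)},r}=\sff_{\ti{\etak(A)},\ti{\etak(B)},\ti{\etakt(C)}}$, and combining with Part (1) specialised to $X=C$ produces the claimed equality $\g_{A,B,C,r}=\sff_{\ti{\etak(A)},\ti{\etak(B)},\ti{\etakt(C)}}$. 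If $\ro(A)\neq\ro(C)$ both sides vanish: the Schur-algebra constant by the row/column-sum preservation of products, and the Hall-algebra constant by the $\bfd$-compatibility requirement (the leading $\mathbf{1}$-coefficients of $\bfd(\ti{\etak(A)})+\bfd(\ti{\etak(B)})$ and $\bfd(\ti{\etakt(C)})$ are both $-2kr$ for $k\ll0$, and matching the remaining leftovers forces $\ro(A)=\ro(C)$).

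The main obstacle is handling the vanishing case $\ro(A)\neq\ro(C)$ cleanly. The cleanest route, sidestepping the $\bfd$-bookkeeping entirely, is to push the Hall-algebra expansion $\th^+_{\ti{\etak(A)}}\cdot\th^+_{\ti{\etak(B)}}=\sum_{C''}\sff_{\ti{\etak(A)},\ti{\etak(B)},C''}\th^+_{C''}$ through $\zr^N$: by Lemma \ref{zr(thA+)}, for every $C''\in\afThNpap$ with $\sg(C'')=r$ one has $\zr^N(\th^+_{C''})=\th_{C'',r}$, so matching the coefficient of $\th_{\ti{\etakt(C)},r}$ on both sides directly equates $\g_{\ti{\etak(A)},\ti{\etak(B)},\ti{\etakt(C)},r}$ with $\sff_{\ti{\etak(A)},\ti{\etak(B)},\ti{\etakt(C)}}$; both coefficients are then simultaneously zero or nonzero, and the $\ro(A)\neq\ro(C)$ case is automatic.
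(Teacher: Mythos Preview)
Your argument is correct and follows essentially the same route as the paper. The only cosmetic difference in Part~(1) is that the paper places the $\rho^{kr}$-shifts on the outside (writing $\th_{\etak(A),r}\th_{\etak(B),r}=\th_{\la,\la}^{\rho^{kr}}\th_{A,r}\th_{B,r}\th_{\nu,\nu}^{\rho^{kr}}$ with $\la=\ro(A)$, $\nu=\co(B)$) rather than sandwiching them in the middle as you do, and for Part~(2) the paper simply cites Lemmas~\ref{ke1}, \ref{ke3} and \ref{prop iota} for exactly the reasoning you spell out (your alternate route via Lemma~\ref{zr(thA+)} is just Lemma~\ref{ke3} unpacked).
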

\begin{proof}
If $\co(A)\not=\ro(B)$ then $\th_{A,r}\th_{B,r}=\th_{\etak(A)}\th_{\etak(B)}=0$ for any $k\in\mbz$. Now we assume
$\co(A)=\ro(B)$.
Let $\la=\ro(A)$ and $\nu=\co(B)$.
Then by Lemma \ref{ke2} we have
\begin{equation}\label{eq1 positive affine Schur}
\th_{\etak(A),r}
\th_{\etak(B),r}=
\th_{\la,\la}^{\rho^{kr}}\th_{A,r}
\th_{B,r}\th_{\nu,\nu}^{\rho^{kr}}=\sum_{X\in\afThnr}\g_{A,B,X,r}\th_{\etak(X),r}
\th_{\nu,\nu}^{\rho^{kr}}=\sum_{X\in\afThnr}\g_{A,B,X,r}\th_{\etakt(X),r}
\end{equation}
for $k\in\mbz$.
Applying $\iota_{n,N}$ to \eqref{eq1 positive affine Schur} gives that
$$\iota_{n,N}(\th_{\etak(A),r})\iota_{n,N}
(\th_{\etak(B),r})=
\sum_{X\in\afThnr}\g_{A,B,X,r}\iota_{n,N}(\th_{{\etakt(X)},r})$$
for $k\in\mbz$.
Thus by Lemma \ref{prop iota} we have
\begin{equation}\label{eq2 positive affine Schur}
\th_{\ti{\etak(A)},r}\th_{\ti{\etak(B)},r}=
\sum_{X\in\afThnr}\g_{A,B,X,r}\th_{\ti{\etakt(X)},r}
\end{equation}
for $k\in\mbz$. The assertion (1) follows.
The assertion (2)
follows from the assertion (1), Lemma \ref{ke3}, Lemma \ref{ke1} and Lemma \ref{prop iota}.
\end{proof}

As a corollary to Theorem \ref{prop of canonical basis for affine q-Schur algebras}, together with
Theorem \ref{positive affine sln} we have the following  positivity property for $\afbfSr$. This gives an alternate approach to Lusztig's result on positivity property for $\afbfSr$ in \cite[4.5]{Lu99}.

\begin{Coro}\label{positive affine Schur}
For $A,B,C\in\afThnr$ we have $\g_{A,B,C,r}\in\mbn[\up,\up^{-1}]$.
\end{Coro}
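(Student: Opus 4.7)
The strategy is to reduce the positivity of $\g_{A,B,C,r}$ to the positivity of the structure constants of the aperiodic canonical basis $\bfBNap$ of $\afbfUslNp$ via Theorem \ref{prop of canonical basis for affine q-Schur algebras}(2), and then invoke Lusztig's Theorem \ref{positive affine sln}.

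First I dispose of the trivial case: if $\co(A)\neq\ro(B)$, then $\th_{A,r}\th_{B,r}=0$, since in this situation $[A][B]=0$ in $\afSr$ and the canonical basis elements $\th_{A,r},\th_{B,r}$ are triangular combinations of $[A'], [B']$ with the same row/column sums. Hence $\g_{A,B,C,r}=0\in\mbn[\up,\up^{-1}]$ for every $C\in\afThnr$, and we may assume $\co(A)=\ro(B)$.

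Fix any integer $N>n$. By Theorem \ref{prop of canonical basis for affine q-Schur algebras}(2), there exists $k_0\in\mbz$ such that for every $k\le k_0$ the matrices $\ti{\etak(A)},\ti{\etak(B)},\ti{\etakt(C)}$ all lie in $\afThNp\cap\afThNrap$ and
\[
\g_{A,B,C,r}=\sff_{\ti{\etak(A)},\ti{\etak(B)},\ti{\etakt(C)}}.
\]
In particular, each of the three indexing matrices is aperiodic, so it belongs to $\afThNpap$, and the right-hand side is precisely a structure constant of the canonical basis $\bfBNap$ of $\afbfUslNp$. Lusztig's positivity theorem (Theorem \ref{positive affine sln}) then gives
\[
\sff_{\ti{\etak(A)},\ti{\etak(B)},\ti{\etakt(C)}}\in\mbn[\up,\up^{-1}],
\]
whence $\g_{A,B,C,r}\in\mbn[\up,\up^{-1}]$, as desired.

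There is no serious obstacle at the level of this corollary: the entire substance has been packaged into Theorem \ref{prop of canonical basis for affine q-Schur algebras}(2), whose role is exactly to identify the $\bfBr$-structure constants with $\bfBNap$-structure constants after suitably shifting by $\etak$ and embedding via $\iota_{n,N}$. The only thing to verify on the spot is the aperiodicity claim in (2), but this is automatic from the construction of $\ti{\,\cdot\,}$: when $N>n$ the rows and columns of $\ti X$ indexed by $n+1,\ldots,N$ are identically zero, so $\ti X$ is aperiodic for any $X\in\afThn$.
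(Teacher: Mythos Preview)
Your proof is correct and follows essentially the same approach as the paper: the paper simply states that the corollary follows from Theorem~\ref{prop of canonical basis for affine q-Schur algebras} together with Theorem~\ref{positive affine sln}, and you have spelled out precisely this argument (including the trivial $\co(A)\neq\ro(B)$ case, which the paper handles inside the proof of Theorem~\ref{prop of canonical basis for affine q-Schur algebras} itself).
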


\subsection{}
There is an injective map from $\bfBn$ to $\bfBNap$ defined by sending $\th_A^+$ to $\th_{\ti A}^+$ for $A\in\afThnp$.
The structure constants for the canonical basis $\bfBn$ of $\dHap$ and the canonical basis $\bfBN^{\text{ap}}$ of $\afUslNp$ are related as follows.
\begin{Thm}\label{relation bfBn bfBNap}
Assume $N> n$. For $A,B,C\in\afThnp$ we have $\sff_{A,B,C}=\sff_{\ti A,\ti B,\ti C}$, where $\sff_{A,B,C}$ is as given in \eqref{fABC}.
\end{Thm}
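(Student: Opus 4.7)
The plan is to realize both $\sff_{A,B,C}$ and $\sff_{\ti A,\ti B,\ti C}$ as specific structure constants $\g_{\cdot,\cdot,\cdot,r}$ in the affine quantum Schur algebras $\afbfSr$ and $\afbfSNr$ respectively (via Lemma~\ref{ke3}), and then to compare these using the algebra embedding $\iota_{n,N}$ provided by Lemma~\ref{prop iota}.

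First, I would dispose of the trivial case: by the remark following \eqref{fABC} one has $\sff_{X,Y,Z}=0$ unless $\bfd(Z)=\bfd(X)+\bfd(Y)$, and since $\bfd(\ti X)=\ti{\bfd(X)}$ straight from the definition of the tilde, both sides of the asserted identity vanish unless $\bfd(C)=\bfd(A)+\bfd(B)$; so I assume this holds. I would then choose $\la\in\afmbnn$ componentwise large enough that $\mu:=\co(A)+\la-\ro(B)\in\afmbnn$ and $\la+\ro(A)-\ro(C')\in\afmbnn$ for each of the finitely many $C'\in\afThnp$ with $\sff_{A,B,C'}\neq 0$. Setting $r=\sg(A)+\sg(\la)$ and writing $A_1=A+\diag(\la)$, $B_1=B+\diag(\mu)$, $C_1=C+\diag(\la+\ro(A-C))$, all of which lie in $\afThnr$, the hypothesis $\co(A)+\la=\ro(B)+\mu$ of Lemma~\ref{ke3} holds by construction and the lemma yields $\g_{A_1,B_1,C_1,r}=\sff_{A,B,C}$.

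Second, I would repeat the construction in the $N$-setting, using the zero-extensions $\ti\la,\ti\mu$. Since the tilde is additive, commutes with $\diag$, $\ro$ and $\co$, and preserves $\sg$ (all direct checks from the definitions), the data $\ti\la,\ti\mu$ satisfy $\co(\ti A)+\ti\la=\ro(\ti B)+\ti\mu$ with the same $r$, and one has $\ti A_1=\ti A+\diag(\ti\la)$, $\ti B_1=\ti B+\diag(\ti\mu)$, $\ti C_1=\ti C+\diag(\ti\la+\ro(\ti A-\ti C))$. Applying Lemma~\ref{ke3} with $n$ replaced by $N$ (its statement and proof rest only on Lemma~\ref{zr(thA+)}, which has an obvious $N$-analog) now gives $\g_{\ti A_1,\ti B_1,\ti C_1,r}=\sff_{\ti A,\ti B,\ti C}$, viewing the latter $\g$ as a structure constant of $\afbfSNr$.

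Finally, Lemma~\ref{prop iota} identifies $\iota_{n,N}(\th_{X,r})=\th_{\ti X,r}$, so the injective algebra homomorphism $\iota_{n,N}$ preserves structure constants and $\g_{A_1,B_1,C_1,r}=\g_{\ti A_1,\ti B_1,\ti C_1,r}$. Chaining the three equalities gives $\sff_{A,B,C}=\sff_{\ti A,\ti B,\ti C}$. I do not foresee any serious obstacle; the only point requiring a touch of care is the freedom to enlarge $\la$, which is granted by the fact that $\th_A^+\th_B^+$ has finite support in $\dHap$ (since $\bfd(C')$ is forced to equal $\bfd(A)+\bfd(B)$).
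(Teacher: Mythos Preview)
Your proposal is correct and follows essentially the same route as the paper's proof: choose suitable $\la,\mu\in\afmbnn$ so that Lemma~\ref{ke3} converts $\sff_{A,B,C}$ into a structure constant $\g_{A+\diag(\la),\,B+\diag(\mu),\,C+\diag(\la+\ro(A-C)),\,r}$, then invoke Lemma~\ref{prop iota} to pass to the tilded data in $\afbfSNr$, and finally apply Lemma~\ref{ke3} again (with $n$ replaced by $N$) to recover $\sff_{\ti A,\ti B,\ti C}$. Your additional care in disposing of the case $\bfd(C)\neq\bfd(A)+\bfd(B)$ and in choosing $\la$ large enough for all relevant $C'$ is harmless but not strictly necessary, since Lemma~\ref{ke3} already returns $\sff_{A,B,C}$ for the single $C$ under consideration once $\la+\ro(A)-\ro(C)\in\afmbnn$.
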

\begin{proof}
There exist $\la,\mu\in\afmbnn$ such that
$\la+\co(A)=\mu+\ro(B)$ and $\la+\ro(A)-\ro(C)\in\afmbnn$. Let $r=\sg(\la)+\sg(A)$.
Then by Lemma \ref{ke3} and Lemma \ref{prop iota} we have $\sff_{A,B,C}=\g_{A+\diag(\la),B+\diag(\mu),C+\diag(\la+\ro(A-C))}
=\g_{\ti A+\diag(\ti\la),\ti B+\diag(\ti\mu),\ti C+\diag(\ti\la+\ro(\ti A-\ti C))}=\sff_{\ti A,\ti B,\ti C}$.
\end{proof}

The following result is a generalization of Theorem \ref{positive affine sln}, which gives the positivity property for $\dHap$.

\begin{Coro}
For $A,B,C\in\afThnp$ we have $\sff_{A,B,C}\in\mbn[\up,\up^{-1}]$.
\end{Coro}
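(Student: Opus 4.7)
The plan is to reduce the claim directly to Lusztig's positivity for $\afUslNp$ (Theorem \ref{positive affine sln}) using the coincidence of structure constants established in Theorem \ref{relation bfBn bfBNap}. Fix any integer $N>n$. By Theorem \ref{relation bfBn bfBNap}, the injective map $\ti\ :\afThnp\to\afTh^+(N)$ satisfies
\begin{equation*}
\sff_{A,B,C}=\sff_{\ti A,\ti B,\ti C}
\end{equation*}
for all $A,B,C\in\afThnp$. Hence it suffices to prove that $\sff_{\ti A,\ti B,\ti C}\in\mbn[\up,\up^{-1}]$.

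The key observation is that the three matrices $\ti A,\ti B,\ti C$ lie in the aperiodic subset $\afThNpap$. Indeed, from the very definition of the embedding $A\mapsto\ti A$, all entries $\ti a_{i,j}$ with $n<i\le N$ (or equivalently, by the periodicity $\ti a_{i+N,j+N}=\ti a_{i,j}$, with any $i\equiv i_0\pmod N$ for some $n<i_0\le N$) are zero. Since $N>n$, the index range $\{n+1,\ldots,N\}$ is nonempty, so picking any such $i_0$ gives $\ti a_{i_0,i_0+l}=0$ for every $l\ne0$; this is exactly the aperiodicity condition. The same argument applies to $\ti B$ and $\ti C$.

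With $\ti A,\ti B,\ti C\in\afThNpap$, the positivity $\sff_{\ti A,\ti B,\ti C}\in\mbn[\up,\up^{-1}]$ is immediate from Theorem \ref{positive affine sln} applied to the aperiodic canonical basis $\bfBNap$ of $\afUslNp$. Combining with the identity $\sff_{A,B,C}=\sff_{\ti A,\ti B,\ti C}$ gives the corollary.

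There is essentially no obstacle here: all the real work has been done upstream, in Theorem \ref{relation bfBn bfBNap} (which itself relies on Theorem \ref{prop of canonical basis for affine q-Schur algebras}) and in Lusztig's Theorem \ref{positive affine sln}. The only point that might require explicit verification is the aperiodicity of $\ti A,\ti B,\ti C$, and that is a direct reading of the definition once $N>n$ is assumed.
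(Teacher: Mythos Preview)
Your proof is correct and follows exactly the same approach as the paper, which simply cites Theorem \ref{positive affine sln} and Theorem \ref{relation bfBn bfBNap}. You have additionally spelled out the aperiodicity of $\ti A,\ti B,\ti C$, which the paper leaves implicit (cf.\ the analogous statement in Lemma \ref{prop iota}).
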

\begin{proof}
The assertion follows from Theorem \ref{positive affine sln} and Theorem \ref{relation bfBn bfBNap}.
\end{proof}
\section{Positivity properties for $\dbfU$ }
\subsection{}
Recall that $I=\mbz/n\mbz$ and $I$ is identified with $\{1,2,\cdots,n\}$.
There is an algebra grading over $\mbz[I]$
$$\bfU(\afsl)=\bop_{\nu\in\mbz[I]}\bfU(\afsl)_\nu$$
defined by the condition $\bfU(\afsl)_{\nu'}\bfU(\afsl)_{\nu''}\han\bfU(\afsl)_{\nu'+\nu''}$,
$\ti K_i\in\bfU(\afsl)_0$, $u^+_{\afE_{i,i+1}}\in\bfU(\afsl)_{i}$,
$u_{\afE_{i+1,i}}^-\in\bfU(\afsl)_{-i}$ for all $\nu',\nu''\in\mbz[I]$,
$i\in I$.

Let us recall the definition of  the modified quantum affine algebra $\dbfU$ of $\bfU(\afsl)$.
Let $X$ be the quotient of $\afmbzn$ by the subgroup generated by
the element ${\bf 1}$, where ${\bf 1}_i=1$ for all $i$.
For $\la\in\afmbzn$ let $\bar\la\in X$ be the image of $\la$ in $X$. Let $Y=\{\mu\in\afmbzn\mid\sum_{1\leq i\leq n}\mu_i=0\}$. For $\bar\la\in X$ and $\mu\in Y$ we set $ \mu\cdot \bar\la=\sum_{1\leq
i\leq n}\la_i\mu_i$.

For $i\in I$ let  $\afbse_i\in\afmbnn$ be the element
satisfying $(\afbse_i)_j=\dt_{i,j}$ for $j\in I$.
There is a natural map  $I\ra X$ defined by sending $i$ to $\ol{\afal_i}$, where $\afal_i=\afbse_i-\afbse_{i+1}$.
The imbedding $I\ra X$ induce a homomorphism $\iota:\mbz[I]\ra X$.

For $\bar\la,\bar\mu\in X$ we set
$${}_{\bar\la}\bfU(\afsl)_{\bar\mu}=\bfU(\afsl)\bigg/
\bigg(\sum_{\bfj\in Y}(\Kbfj-
 \up^{\bfj\cdot\bar\la})\bfU(\afsl)+\sum_{\bfj\in Y}\bfU(\afsl)(\Kbfj
 -\up^{\bfj\cdot\bar\mu})\bigg).$$
Let $\pi_{\bar\la,\bar\mu}:\bfU(\afsl)\ra
{}_{\bar\la}\bfU(\afsl)_{\bar\mu}$ be the canonical projection.
Let  $$\dbfU:=\bop\limits_{\bar\la,\bar\mu\in X}
{}_{\bar \la}\bfU(\afsl)_{\bar\mu}.$$
We define the product in $\dbfU$ as follows.
Let $\la',\mu',\la'',\mu''\in X$ and $\nu',\nu''\in\mbz[I]$ with $\la'-\mu'=\iota(\nu')$ and $\la''-\mu''=\iota(\nu'')$. For  $t\in\bfU(\afsl)_{\nu'}$,
$s\in\bfU(\afsl)_{\nu''}$,  define
$$\pi_{\la',\mu'}(t)\pi_{\la'',\mu''}(s)=\begin{cases}\pi_{\la',\mu''}(ts),
& \text{if } \mu'=\la''\\
0& \text{otherwise}.
\end{cases}$$
Then $\dbfU$ becomes an associative $\mbq(\up)$-algebra structure with respect to the above product.

\subsection{}
Let $\dbfHasl$ be the subalgebra of $\dbfHa$ generated by the elements $u_A^+$, $u_A^-$ and $\ti K_i^{\pm 1}$ for $A\in\afThnp$ and $i\in I$.
The algebra $\dbfHasl$ is a $\mbz[I]$-graded algebra with
$$\deg(u_A^+)=\sum\limits_{1\leq i\leq n}d_i i,\
\deg(u_A^-)=-
\sum_{1\leq i\leq n}d_i i\ \text{and}\ \deg(\ti K_i^{\pm 1})=0$$
for $A\in\afThnp$ and $1\leq i\leq n$, where $(d_i)_{i\in\mbz}=\bfd(A)$.

Let  $$\ddbfHasl:=\bop\limits_{\bar\la,\bar\mu\in X}
{}_{\bar \la}\dbfHasl_{\bar\mu},$$ where
${}_{\bar\la}\dbfHasl_{\bar\mu}=\dbfHasl\big/
\big(\sum_{\bfj\in Y}(\Kbfj-
 \up^{\bfj\cdot\bar\la})\dbfHasl+\sum_{\bfj\in Y}\dbfHasl(\Kbfj
 -\up^{\bfj\cdot\bar\mu})\big).$
As in the case of $\dbfU$, there is a natural associative $\mbq(\up)$-algebra structure on $\ddbfHasl$ inherited from that of $\dbfHasl$. We will naturally regard $\dbfU$ as a subalgebra of $\ddbfHasl$.

For $\bar\la,\bar\mu\in X$, let $\pi_{\bar\la,\bar\mu}:\dbfHasl\ra
{}_{\bar\la}\dbfHasl_{\bar\mu}$ be the canonical projection.
The algebra $\ddbfHasl$ is naturally a $\dbfHasl$-bimodule defined by
\begin{equation*}\label{bimodule}
t'\pi_{\la',\la''}(s)t''=\pi_{\la'+\iota(\nu'),\la''-\iota(\nu'')}(t'st'') \end{equation*}
for $t'\in\dbfHasl_{\nu'}$, $s\in\dbfHasl$, $t''\in\dbfHasl_{\nu''}$ and $\la',\la''\in X$.

For $\bar\la\in X$ let $1_{\bar\la}=\pi_{\bar\la,\bar\la}(1)$.
The map $\zr$ defined in Theorem \ref{zr} induces an
surjective algebra homomorphism
$$\dzr:\ddbfHasl\ra\afbfSr$$ such that
for $A\in\afThnp$ and
$\bar\la\in X$, $\dzr(u_A^\pm1_{\bar\la})=\zr(u_A^\pm)[\diag(\mu)]$, if  $\bar\la=\bar\mu$ for some $\mu\in\afLanr$, and $\dzr(u_A^\pm1_{\bar\la})=0$ otherwise (cf. \cite[3.6]{Fu13}).

The maps $\dzr$ induce an algebra homomorphism
$$\dz:\ddbfHasl\ra\prod_{r\geq 0}\afbfSr$$
such that $\dz(x)=(\dzr(x))_{r\geq 0}$ for $x\in\ddbfHasl$.
The following result is a generalization of Lusztig \cite[3.5]{Lu00}.

\begin{Thm} \label{injective}
The map $\dz:\ddbfHasl\ra\prod_{r\geq 0}\afbfSr$ is injective.
\end{Thm}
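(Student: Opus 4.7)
The plan is to imitate Lusztig's injectivity argument from \cite[3.5]{Lu00}, adapted to the affine setting. The strategy is: expand any element $x \in \ker(\dz)$ in a PBW-type basis of $\ddbfHasl$, compute its image under $\dzr$ for $r$ varying over sufficiently large integers using Theorem \ref{zr}, and deduce from linear independence in $\afbfSr$ that all the coefficients must vanish.

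First, I would invoke the triangular decomposition of $\dbfHasl$ to obtain a $\mbq(\up)$-basis of $\ddbfHasl$ of the form $\{\tilde u_A^+\,1_{\bar\la}\,\tilde u_B^-\mid A,B\in\afThnp,\,\bar\la\in X\}$, so that any $x\in\ker(\dz)$ can be written as a finite sum
\begin{equation*}
x=\sum_{(A,B,\bar\la)\in\Omega}c_{A,B,\bar\la}\,\tilde u_A^+\, 1_{\bar\la}\,\tilde u_B^-,\qquad c_{A,B,\bar\la}\in\mbq(\up)\setminus\{0\},
\end{equation*}
where $\Omega$ is a finite set of triples; the task is then to prove $\Omega=\varnothing$.

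Second, using the formula $\dzr(u_A^\pm 1_{\bar\la})=\zr(u_A^\pm)[\diag(\mu)]$ (valid when $\bar\la$ has a representative $\mu\in\afLanr$) together with Theorem \ref{zr} and the multiplicativity of $\dzr$, I would check that
\begin{equation*}
\dzr\bigl(\tilde u_A^+\,1_{\bar\la}\,\tilde u_B^-\bigr) = [A+\diag(\mu-\co(A))]\cdot[\tB+\diag(\mu-\co(B))]
\end{equation*}
for a suitable representative $\mu\in\afLanr$ of $\bar\la$ satisfying $\mu\ge\co(A)+\co(B)$, and that the image is zero when $\bar\la$ admits no representative in $\afLanr$. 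Invoking a BLM-type multiplication formula for a product of an ``upper-'' and a ``lower-triangular'' element in $\afbfSr$ (cf.\ \cite{DDF}), this product expands in the $\{[N]\}$-basis with $\sqsubseteq$-leading term a nonzero scalar multiple of $[M_{A,B,\mu}]$, where $M_{A,B,\mu}=A+\tB+\diag(\mu-\co(A)-\co(B))\in\afThnr$.

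Third, partition $\Omega$ according to the residue of $\sg(\mu)$ modulo $n$, which depends only on $\bar\la\in X$. For each residue class, take $r$ large in that class. Then each $\bar\la$ in the class admits a unique representative $\mu\in\afLanr$, and $r$ can be chosen so large that $\mu\ge\co(A)+\co(B)$ for every relevant $(A,B)$. The merged matrices $M_{A,B,\mu}$ attached to distinct triples $(A,B,\bar\la)$ within the residue class are then pairwise distinct: their strictly upper- and strictly lower-triangular parts recover $A$ and $B$ respectively, and $\mu$ (hence $\bar\la$) is then recovered from the row or column sums. Consequently the leading terms of the images are distinct basis elements of $\afbfSr$, and the assumption $\dzr(x)=0$ forces the corresponding $c_{A,B,\bar\la}$ to vanish. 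Varying over residue classes yields $\Omega=\varnothing$.

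The main obstacle is verifying that the merged matrix $M_{A,B,\mu}$ does appear as the $\sqsubseteq$-leading term of the product $[A+\diag(\mu-\co(A))]\cdot[\tB+\diag(\mu-\co(B))]$ in $\afbfSr$ with a nonzero scalar coefficient. This is essentially a consequence of the affine BLM-type multiplication formulas developed in \cite{DDF}, but it requires a careful matching of the matrix combinatorics (row/column sum compatibility together with the definition of $\sqsubseteq$ in terms of the partial sums $\sg_{i,j}$). Once this leading-term analysis is in place, the remaining argument is formal linear algebra.
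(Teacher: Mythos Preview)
Your proposal is correct and follows essentially the same strategy as the paper's proof: both expand an element of the kernel in a PBW-type basis of $\ddbfHasl$, push it through $\dzr$ for suitable $r$, and use the triangularity (with respect to $\preccurlyeq$) of the product of an upper-triangular generator by a lower-triangular one in $\afbfSr$ to force the coefficients to vanish. The key input you flag as the ``main obstacle'' is precisely the result the paper invokes, namely \cite[3.7.3]{DDF}, which says $A^+(\bfl,r)A^-(\bfl,r)=A(\bfl,r)+(\text{terms with } C\prec A)$; your merged matrix $M_{A,B,\mu}$ is exactly the paper's leading term after reindexing pairs $(A,B)\in(\afThnp)^2$ by single matrices $A+\tB\in\afThnpm$.

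The organizational difference is minor but worth noting: the paper first fixes a single $\bar\la=\bfa\in X$ (by multiplying by $1_{\bfa}$), chooses $r$ for that one coset, and then picks a $\preccurlyeq$-maximal element among the surviving terms to obtain a contradiction. You instead treat an entire residue class of $X$ at once and argue that the leading terms for distinct triples $(A,B,\bar\la)$ are distinct. Your route is valid, but the linear-independence step is slightly more delicate than you indicate: distinct leading terms alone do not give linear independence without invoking the triangular structure, and terms coming from different $\bar\la$ can in principle land in the same $(\ro,\co)$-block, so you must either (i) also multiply by a diagonal idempotent to isolate one $\bar\la$ at a time (which is what the paper does), or (ii) observe that the leading matrix $M_{A,B,\mu}$ determines $(A,B,\mu)$ uniquely and then run a genuine maximal-element argument across the whole residue class. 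Either fix is routine, and the paper's choice (i) is the cleaner one.
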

\begin{proof}
Note that  the set
$\{1_{\bar\la}\ti u_A^+\ti u_B^-\mid A,B\in\afThnp,\,\bar\la\in
X\}$ forms a $\mbq(\up)$-basis for
$\ddbfHasl$. We use reduction to
absurdity. Assume $x=\sum_{A\in\afThnpm,\,\bar\la\in
X}\beta_{A,\bar\la}1_{\bar\la}\ti u_{A^+}^+\ti u_{\tAm}^-
\not=0\in\ddbfHasl$ is such that
${\dot\zeta}(x)=0$. Then there exist $\bfa\in
X$ such that $1_{\bfa}x\not=0$. Since the set
$$\sT:=\{A\mid
A\in\afThnpm,\,\beta_{A,\bfa}\not=0\}$$ is finite we may choose a
maximal element $B$ in $\sT$ with respect to $\pr$.  We choose
$\mu\in\afmbnn$ such that $\bar\mu=\bfa$ and $\mu\geq \ro(B)$. Let
$r_0=\sg(\mu)$. Then we have
$$0=[\diag(\mu)]
{\dot\zeta}_{r_0}(x)
=\sum_{A\in\sT}\beta_{A,\bfa}[\diag(\mu)]A^+(\bfl,r)A^-(\bfl,r).
$$
By \cite[3.7.3]{DDF} we have
$$A^+(\bfl,r)A^-(\bfl,r)=A(\bfl,r)+\sum_{C\in\afThnr,\,C\p A}\ga_{A,C}[C]$$
where $\ga_{A,C}\in\mbq(\up)$.
This implies that
\begin{equation*}
\begin{split}
&\qquad\sum_{A\in\sT}\beta_{A,\bfa}[\diag(\mu)]A^+(\bfl,r)A^-(\bfl,r)
\\&=
\bt_{B,\bfa}[\diag(\mu)]\bigg(B(\bfl,r)+\sum_{C\in\afThnr\atop
C\p B}\ga_{B,C}[C]\bigg)
+\sum_{A\in\sT\atop B\not\pr A}\bt_{A,\bfa}[\diag(\mu)]
\bigg(A(\bfl,r)+\sum_{C\in\afThnr\atop
C\p A}\ga_{A,C}[C]\bigg)\\
&=\beta_{B,\bfa}[B+\diag(\mu-\ro(B))]+f
\end{split}
\end{equation*} where
$f$ is a linear combination of $[C'+\diag(\nu)]$ such that
$C'\not= B$, $C'\in\afThnpm$ and $\nu\in\afTh(n,r-\sg(C'))$. Thus we have $\beta_{B,\bfa}=0$.
This is a contradiction.
\end{proof}
\subsection{}

Let $\dbfBn$ be the canonical basis of $\dbfU$ defined in \cite{Lubk}.
Let $\phi_{r+n,r}:\afbfSrn\ra\afbfSr$ be the algebra homomorphism defined in
\cite[1.11]{Lu00}. According to \cite[3.4(a)]{Lu00} we have
\begin{equation}\label{commute}
\phi_{r+n,r}\circ\dz_{r+n}(x)=\dz_r(x)
\end{equation}
for all $r\in\mbn$ and $x\in\dbfU$.
The following result was proved by Schiffmann--Vasserot \cite{SV} (see also Lusztig \cite[4.1]{Lu00} and Mcgerty \cite[7.10]{Mcgerty}).
\begin{Thm}\label{SV}
$(1)$ We have $\dzr(\dbfBn)\han\{0\}\cup\{\th_{A,r}\mid A\in\afThnr\}$.

$(2)$ For $A\in\afThnrnap$ we have
$$\phi_{r+n,r}(\th_{A,r+n})=
\begin{cases}
\th_{A-E,r}&\text{if $a_{i,i}\geq 1$ for $1\leq i\leq n$},\\
0&\text{otherwise},
\end{cases}$$
where $E=(\dt_{i,j})_{i,j\in\mbz}\in\afThn$.
\end{Thm}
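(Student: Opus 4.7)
The plan is to verify both parts by invoking the general uniqueness principle behind canonical bases: a bar-invariant element of $\afbfSr$ whose expansion in the monomial basis $\{[B]\mid B\in\afThnr\}$ has a prescribed leading term as in \eqref{th_Ar} must coincide with the corresponding element $\th_{A,r}\in\bfBr$. Both parts thus reduce to (i) establishing bar-compatibility of the relevant algebra homomorphism and (ii) computing the leading term of the image of a canonical basis element.

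For part (1), the first step is to verify that $\dzr:\ddbfHasl\ra\afbfSr$ intertwines the bar involutions. The bar involution on $\ddbfHasl$ descends from the natural one on $\dbfHa$, while the one on $\afbfSr$ is defined through $\End_{\afHr}\bigl(\bop_{\la\in\afLanr}x_\la\afHr\bigr)$. Compatibility is inherited from Schur--Weyl duality, matching the Hecke-algebraic bar operation with the Hall-algebra bar operation after projection. Given $b\in\dbfBn$, expand $b$ in the PBW-type basis $\{1_{\bar\la}\ti u_{A^+}^+\ti u_{\tAm}^-\}$ used in Theorem \ref{injective}. Applying $\dzr$ together with the formula
\[
A^+(\bfl,r)A^-(\bfl,r)=A(\bfl,r)+\sum_{C\in\afThnr,\,C\p A}\ga_{A,C}[C]
\]
from \cite[3.7.3]{DDF} and the description of $\dzr(1_{\bar\la})$ as $[\diag(\mu)]$ (when $\bar\mu=\bar\la$ for some $\mu\in\afLanr$, zero otherwise), one extracts from $\dzr(b)$ a leading term of the form $[B+\diag(\nu)]$ for suitable $B\in\afThnpm$ and $\nu\in\afmbnn$. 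Combined with the bar-invariance of $\dzr(b)$, the uniqueness characterization \eqref{th_Ar} of $\bfBr$ forces $\dzr(b)\in\{0\}\cup\{\th_{C,r}\mid C\in\afThnr\}$.

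For part (2), the same strategy applies to $\phi_{r+n,r}:\afbfSrn\ra\afbfSr$. One first verifies that $\phi_{r+n,r}$ intertwines the bar involutions on the two affine quantum Schur algebras; this follows directly from the definition in \cite[1.11]{Lu00} and the fact that $\phi_{r+n,r}$ can be realised via a Hecke-algebraic restriction functor that is manifestly bar-preserving. Next, for $A\in\afThnrnap$, one computes $\phi_{r+n,r}([A])$ in the monomial basis: when all diagonal entries $a_{i,i}\ge1$, the dominant contribution is $[A-E]$ with corrections that lie in $\up^{-1}\mbz[\up^{-1}]$-multiples of $[B]$'s with $B\sqsubset A-E$; when some $a_{i,i}=0$, aperiodicity combined with the support constraints on $\phi_{r+n,r}$ forces the image to vanish, since no element $[B]$ in the support can serve as a leading term of a bar-invariant canonical basis element. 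Applying \eqref{th_Ar} and the uniqueness of $\bfBr$ one more time yields the stated formula.

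The main obstacle is the bar-compatibility of $\dzr$ and $\phi_{r+n,r}$, which requires tracking how the bar involutions on $\dbfHa$, $\afHr$, $\afbfSr$, and $\afbfSrn$ interact through the Schur--Weyl and restriction functors. A secondary technical point is the precise control of lower-order terms in the PBW expansion after applying $\dzr$, so that the leading term of $\dzr(b)$ can be identified with the leading term of some $\th_{C,r}$ (or shown to vanish). Once these compatibility and leading-term computations are in hand, the uniqueness principle of Kazhdan--Lusztig-type bases delivers both parts.
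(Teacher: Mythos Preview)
The paper does not prove this theorem: it is quoted from the literature, attributed to Schiffmann--Vasserot \cite{SV} with parallel treatments by Lusztig \cite[4.1]{Lu00} and McGerty \cite[7.10]{Mcgerty}. In those references the argument is geometric: the canonical basis of $\dbfU$ is realised via simple perverse sheaves, the canonical basis $\bfBr$ via intersection cohomology complexes on the convolution variety, and the maps $\dzr$ and $\phi_{r+n,r}$ are shown to match these sheaf-theoretic constructions, so that canonical basis elements are sent to canonical basis elements (or zero) essentially by the decomposition theorem. Your proposal attempts instead a purely algebraic proof via the uniqueness characterisation \eqref{th_Ar}.

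There is a genuine gap in your argument, and it lies precisely where the geometric input is doing the work in the cited proofs. For part (1), bar-invariance of $\dzr(b)$ is fine, but the step ``one extracts from $\dzr(b)$ a leading term of the form $[B+\diag(\nu)]$'' is unjustified. The element $b\in\dbfBn$ is not characterised by a PBW-triangularity with coefficients in $\up^{-1}\mbz[\up^{-1}]$; its PBW expansion may involve several monomials whose images under $\dzr$ have $\sqsubseteq$-incomparable leading terms, and even if a unique maximal term existed you give no reason why the remaining coefficients lie in $\up^{-1}\mbz[\up^{-1}]$ rather than merely in $\sZ$. Without that, the uniqueness principle behind \eqref{th_Ar} does not apply. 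The same objection applies to part (2): you assert that $\phi_{r+n,r}([A])$ has dominant term $[A-E]$ with lower terms already in $\up^{-1}\mbz[\up^{-1}]$, but $\phi_{r+n,r}$ is defined by a signed alternating sum in \cite[1.11]{Lu00} and does not send $[A]$ to $[A-E]$ plus controlled lower terms in any obvious way; likewise the vanishing when some $a_{i,i}=0$ is not a formal consequence of aperiodicity and support constraints. What makes both statements true is that canonical bases on either side arise from the \emph{same} geometric objects, not that an arbitrary bar-invariant image happens to satisfy the triangularity in \eqref{th_Ar}.
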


For $A\in\afThnap$ with $A-E\not\in\afThn$ let
$\fb_A=(a_r)_{r\geq 0}\in\prod_{r\geq 0}\afbfSr$, where
$a_r=\th_{A+mE,r}$ if $r=\sg(A)+mn$ for some $m\geq 0$, and $a_r=0$ otherwise.

\begin{Lem}\label{dz(dotbfB)}
We have $\dz(\dbfBn)=\{\fb_A\mid A\in\afThnap,\,A-E\not\in\afThn\}$.
\end{Lem}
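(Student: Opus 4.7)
The plan is to establish both inclusions by tracing, for each $b \in \dbfBn$, the full sequence $(\dz_r(b))_{r \ge 0}$ and matching it with some $\fb_A$.

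\emph{Forward inclusion.} Fix a nonzero $b \in \dbfBn$. By Theorem \ref{injective}, $\dz(b) \ne 0$, so let $r_0$ be the minimal $r \ge 0$ with $\dz_{r_0}(b) \ne 0$. Since canonical basis elements lie in a single weight-pair component of $\dbfU$, the support of $\dz(b)$ is contained in $r_0 + n\mbn$. Theorem \ref{SV}(1) gives $\dz_{r_0}(b) = \th_{A, r_0}$ for some matrix $A$. Combining \eqref{commute} with Theorem \ref{SV}(2), I would argue by induction on $m$ that $\dz_{r_0 + mn}(b) = \th_{A + mE,\, r_0 + mn}$: nonvanishing at level $m$ follows from $\phi_{r+n, r}(\dz_{r+n}(b)) = \dz_r(b) \ne 0$, and Theorem \ref{SV}(2) then forces the matrix at level $m$ to be $A + mE$. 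Minimality of $r_0$, again combined with Theorem \ref{SV}(2), forces $A - E \notin \afThn$: otherwise $\dz_{r_0 - n}(b) = \th_{A - E,\, r_0 - n} \ne 0$ would contradict minimality. Aperiodicity of $A$ is preserved along the sequence since $A + mE$ shares the off-diagonal entries of $A$, and only aperiodic canonical basis elements appear in the images described by Theorem \ref{SV}. Hence $\dz(b) = \fb_A$.

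\emph{Reverse inclusion.} The forward step defines an injection $\Phi \colon \dbfBn \hookrightarrow \{A \in \afThnap \mid A - E \notin \afThn\}$, with injectivity from Theorem \ref{injective}. Surjectivity of $\Phi$ follows by appealing to the standard parametrization of $\dbfBn$ by the same combinatorial index set (Lusztig; see also Schiffmann--Vasserot and the references surrounding Theorem \ref{SV}).

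\emph{Main obstacle.} The delicate point is the induction in the forward direction: one must simultaneously verify nonvanishing of $\dz_{r_0 + mn}(b)$ and the hypothesis of Theorem \ref{SV}(2), namely that the diagonal entries of the matrix at level $m$ are all $\ge 1$. Nonvanishing comes from the compatibility \eqref{commute}, and the positive diagonal condition holds automatically for $m \ge 1$ once aperiodicity at $m = 0$ is treated by invoking the parametrization of $\dbfBn$.
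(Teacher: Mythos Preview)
Your forward inclusion follows the paper's argument closely, but one point needs tightening: Theorem~\ref{SV}(1) as stated only gives $\dzr(\dbfBn)\subseteq\{0\}\cup\{\th_{A,r}\mid A\in\afThnr\}$, with no aperiodicity constraint, whereas Theorem~\ref{SV}(2) requires its input matrix to be aperiodic. The paper closes this by invoking \cite[8.2]{Lu99}, which says that the nonzero images $\dzr(b)$ for $b\in\dbfBn$ are exactly the $\th_{A,r}$ with $A\in\afThnrap$. Once that is in hand, your induction goes through: at level $m$ one has $\dz_{r_0+mn}(b)=\th_{B,r_0+mn}$ for some aperiodic $B$, and since $\phi_{r_0+mn,r_0+(m-1)n}(\th_{B,r_0+mn})=\dz_{r_0+(m-1)n}(b)\neq 0$, the ``otherwise'' branch of Theorem~\ref{SV}(2) is excluded, forcing $B-E=A+(m-1)E$. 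Your remark that the positive-diagonal condition is handled ``by invoking the parametrization of $\dbfBn$'' is off the mark; it is handled by this nonvanishing.

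The reverse inclusion as you have written it has a genuine gap. An abstract bijection between $\dbfBn$ and $\{A\in\afThnap\mid A-E\notin\afThn\}$, even if granted, does not force your particular injection $\Phi$ to be surjective: both sets are infinite, and so are the pieces over each fixed weight pair $(\bar\la,\bar\mu)\in X\times X$, so no pigeonhole argument is available. You would need to know that the abstract parametrization agrees with $\Phi$, which is essentially what the lemma is establishing. The paper's argument is instead constructive and again rests on \cite[8.2]{Lu99}: given $A'\in\afThnap$ with $A'-E\notin\afThn$, that result supplies a $b'\in\dbfBn$ with $\dz_{r_0'}(b')=\th_{A',r_0'}$ where $r_0'=\sg(A')$; the forward direction, together with the hypothesis $A'-E\notin\afThn$ (which forces $r_0'$ to be the minimal nonvanishing level for $b'$), then yields $\dz(b')=\fb_{A'}$.
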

\begin{proof}
Let $b\in\dbfBn$. By Theorem \ref{injective} we have $\dz(b)\not=0$.
Let $r_0=\min\{r\in\mbn\mid\dz_{r}(b)\not=0\}$. Then by Theorem \ref{SV}(1) and \cite[8.2]{Lu99} we have $\dz_{r_0}(b)=\th_{A,r_0}$ for some $A\in\afThnrzap$.
From \eqref{commute} we see that $\phi_{r_0,r_0-n}(\th_{A,r_0})=\phi_{r_0,r_0-n}\circ\dz_{r_0}(b)=
\dz_{r_0-n}(b)=0$. Thus by Theorem \ref{SV}(2) we have $A-E\not\in\afThn$. By the proof of \cite[4.3]{Lu00}, we know that if $\dz_r(b)\not=0$ for some $r>r_0$, then $r\equiv r_0\!\!\mod n$. Furthermore, if $m>0$ then by \eqref{commute} we have
$$\th_{A,r_0}=\dz_{r_0}(b)=\phi_{r_0+n,r_0}\circ\phi_{r_0+2n,r_0+n}\circ\cdots\circ\phi_{r_0+mn,
r_0+(m-1)n}\circ\dz_{r_0+mn}(b).$$
This together with Theorem \ref{SV} implies that
$\dz_{r_0+mn}(b)=\th_{A+mE,r_0+mn}$. Thus we have $\dz(b)=\fb_A$.

On the other hand, if $A'\in\afThnap$ with $A'-E\not\in\afThn$,
by \cite[8.2]{Lu99} we conclude that
there exists $b'\in\dbfBn$ such that $\dz_{r_0'}(b')=\th_{A',r_0'}$,
where $r_0'=\sg(A')$. By the proof above we conclude that $\dz(b')=\fb_{A'}$. The assertion follows.
\end{proof}

By Theorem \ref{injective} and Lemma \ref{dz(dotbfB)} we conclude that
for each $A\in\afThnap$ with $A-E\not\in\afThn$, there exists
a unique $\fc_A\in\dbfBn$ such that $\dz(\fc_A)=\fb_A$. Furthermore
we have $$\dbfBn=\{\fc_A\mid A\in\afThnap,\,A-E\not\in\afThn\}.$$
Thus $\dbfBn$ is indexed by the set $\{A\in\afThnap\mid A-E\not\in\afThn\}$.
For $A,B\in\afThnap$ with $A-E,B-E\not\in\afThn$ we write
\begin{equation}\label{cAcB}
\fc_A\fc_B=\sum_{C\in\afThnap\atop C-E\not\in\afThn}\sfh_{A,B,C}\fc_C,
\end{equation}
where $\sfh_{A,B,C}\in\sZ$.

Recall the map $\eta_m$ defined in \eqref{etam}.
The structure constants for the canonical basis $\dbfBn$ of $\dbfU$ and
the structure constants for  the canonical basis $\bfBN^{\text{ap}}=\{\th_A^+\mid A\in\afThNpap\}$ of $\afbfUslNp$ are related in the following way.
\begin{Thm}\label{relation dbfBn bfBNap}
Assume $N>n$. Let $A,B\in\afThnap$ with $A-E,B-E\not\in\afThn$. If $C\in\afThnap$ with
$C-E\not\in\afThn$ is such that
$\sfh_{A,B,C}\not=0$, then there exist $m_1,m_2,m_C\in\mbn$ and $k_0\in\mbz$ such that
$\sg(A)+nm_1=\sg(B)+nm_2=\sg(C)+nm_C$, $\widetilde{A_k},\widetilde{B_k},
\widetilde{C_k}\in\afThNp^{\mathrm{ap}}$ and
$$\sfh_{A,B,C}=\sff_{\widetilde{A_k},\widetilde{B_k},
\widetilde{C_k}}$$
for $k\leq k_0$, where $A_k=\etak(A+m_1E)$, $B_k=\etak(B+m_2E)$, $C_k=\etakt(C+m_CE)$ and
$\sff_{\widetilde{A_k},\widetilde{B_k},
\widetilde{C_k}}$ is as given in \eqref{fABC}.
\end{Thm}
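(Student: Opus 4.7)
The strategy is to lift the defining relation \eqref{cAcB} from $\dbfU$ to an affine quantum Schur algebra $\afbfSr$ via the homomorphism $\dzr$ and then invoke Theorem \ref{prop of canonical basis for affine q-Schur algebras}(2), which already supplies the analogous bridge to $\afbfUslNp$ at the Schur algebra level.

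First I would observe that $\sfh_{A,B,C}\neq 0$ forces the congruence $\sg(A)\equiv \sg(B)\equiv \sg(C)\pmod n$. This is extracted either directly from the proof of Lemma \ref{dz(dotbfB)} (where the same modular restriction is shown for $\dz_r(b)\neq 0$) or from the fact that $\phi_{r+n,r}$ in Theorem \ref{SV}(2) shifts the total sum by exactly $n$. Consequently one may pick $m_1,m_2,m_C\in\mbn$ with
$$r := \sg(A)+nm_1 = \sg(B)+nm_2 = \sg(C)+nm_C.$$
By the construction of $\fc_D$ preceding \eqref{cAcB}, together with Lemma \ref{dz(dotbfB)}, for this choice of $r$ we have $\dzr(\fc_A)=\th_{A+m_1E,r}$, $\dzr(\fc_B)=\th_{B+m_2E,r}$, and $\dzr(\fc_C)=\th_{C+m_CE,r}$.

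Next, applying the algebra homomorphism $\dzr$ to \eqref{cAcB} gives
$$\th_{A+m_1E,r}\,\th_{B+m_2E,r} \;=\; \sum_{C'}\sfh_{A,B,C'}\,\dzr(\fc_{C'}),$$
and since $\bfBr=\{\th_{D,r}\mid D\in\afThnr\}$ is a $\sZ$-basis of $\afbfSr$, reading off the coefficient of $\th_{C+m_CE,r}$ yields
$$\sfh_{A,B,C}=\g_{A+m_1E,\,B+m_2E,\,C+m_CE,\,r},$$
where $\g_{-,-,-,-}$ is defined in \eqref{gABCr}. The condition $\co(A+m_1E)=\ro(B+m_2E)$ required by Theorem \ref{prop of canonical basis for affine q-Schur algebras}(2) is automatic, for otherwise the product on the left-hand side above vanishes, contradicting $\sfh_{A,B,C}\neq 0$.

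Finally, Theorem \ref{prop of canonical basis for affine q-Schur algebras}(2) applied to the triple $(A+m_1E,\,B+m_2E,\,C+m_CE)\in\afThnr^{\times 3}$ produces an integer $k_0\in\mbz$ such that for all $k\leq k_0$ the matrices $\widetilde{\etak(A+m_1E)},\widetilde{\etak(B+m_2E)},\widetilde{\etakt(C+m_CE)}$ all lie in $\afThNp\cap\afThNrap=\afThNpap$ and satisfy
$$\g_{A+m_1E,\,B+m_2E,\,C+m_CE,\,r} \;=\; \sff_{\widetilde{\etak(A+m_1E)},\,\widetilde{\etak(B+m_2E)},\,\widetilde{\etakt(C+m_CE)}}.$$
Combining the two displayed equalities yields the assertion with $A_k=\etak(A+m_1E)$, $B_k=\etak(B+m_2E)$, $C_k=\etakt(C+m_CE)$. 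The main technical hurdle is the initial step: establishing the modular compatibility $\sg(A)\equiv\sg(B)\equiv\sg(C)\pmod n$ in order to select a single $r$ on which all three canonical elements $\fc_A,\fc_B,\fc_C$ are simultaneously nonzero under $\dzr$; once this is in hand, the rest is a direct translation through $\dzr$ together with the already established Schur-algebra result.
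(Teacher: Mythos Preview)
Your proof is correct and follows essentially the same route as the paper's: pass from $\fc_A\fc_B$ to an identity in $\afbfSr$ via $\dzr$, identify $\sfh_{A,B,C}$ with a structure constant $\g_{A+m_1E,\,B+m_2E,\,C+m_CE,\,r}$, and then invoke Theorem~\ref{prop of canonical basis for affine q-Schur algebras}(2). The only cosmetic difference is that the paper chooses a single $r_0$ large enough to accommodate \emph{every} $C'$ with $\sfh_{A,B,C'}\neq 0$ at once, whereas you pick $r$ for the particular $C$; this is harmless because distinct $C'$ with $C'-E\notin\afThn$ cannot satisfy $C'+m'E=C+m_CE$ unless $C'=C$, so the coefficient of $\th_{C+m_CE,r}$ on the right-hand side is still exactly $\sfh_{A,B,C}$.
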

\begin{proof}
By \eqref{cAcB} we have
\begin{equation}\label{bAbB}
\fb_A\fb_B=\sum_{C\in\afThnap\atop C-E\not\in\afThn}\sfh_{A,B,C}\fb_C,
\end{equation}
where $\sfh_{A,B,C}\in\sZ$.
If $\sg(A)\not\equiv\sg(B)\!\!\mod n$ then by definition we have $\fb_A\fb_B=0$. Now we assume $\sg(A)\equiv\sg(B)\!\!\mod n$.
Let $\sX=\{C\in\afThnap\mid C-E\not\in\afThn,\,
\sfh_{A,B,C}\not=0\}$. We choose $r_0\in\mbn$ such that $r_0\equiv\sg(A)\!\!\mod n$, $r_0\geq\sg(A)$, $r_0\geq\sg(B)$ and $r_0\geq\sg(C)$ for $C\in\sX$.
Note that  $\sg(C)\equiv\sg(A)\!\!\mod n$ for $C\in\sX$.
Assume $r_0=\sg(A)+nm_1=\sg(B)+nm_2=\sg(C)+nm_C$ for $C\in\sX$.
Then by \eqref{bAbB} we have
$$\th_{A+m_1E,r_0}\th_{B+m_2E,r_0}=\sum_{C\in\sX}
\sfh_{A,B,C}\th_{C+m_CE,r_0}.$$
This implies that $\sfh_{A,B,C}=\g_{A+m_1E,B+m_2E,C+m_CE,r_0}$. Now the assertion follows from
Theorem \ref{prop of canonical basis for affine q-Schur algebras}.
\end{proof}

The following theorem gives the positivity property for $\dbfU$.
\begin{Thm}\label{positive modified affine sln}
For $b,b'\in\dbfBn$ we have $bb'\in\sum_{b''\in\dbfBn}\mbn[\up,\up^{-1}]b''$.
\end{Thm}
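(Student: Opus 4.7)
The plan is to deduce this immediately from the structural result already established in Theorem \ref{relation dbfBn bfBNap} together with Lusztig's positivity theorem for $\afbfUslNp$ (Theorem \ref{positive affine sln}). So there is essentially no new work to do: the two preceding theorems have been crafted precisely so that the desired positivity falls out by combining them.

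Explicitly, I would take arbitrary $b,b' \in \dbfBn$ and, using the indexing established just before \eqref{cAcB}, write $b = \fc_A$ and $b' = \fc_B$ for uniquely determined $A,B \in \afThnap$ with $A-E, B-E \notin \afThn$. Expanding the product via \eqref{cAcB} gives
\[
\fc_A \fc_B = \sum_{C \in \afThnap,\, C-E \notin \afThn} \sfh_{A,B,C}\, \fc_C,
\]
so it suffices to show that each structure constant $\sfh_{A,B,C}$ lies in $\mbn[\up,\up^{-1}]$. For any $C$ with $\sfh_{A,B,C} \neq 0$, Theorem \ref{relation dbfBn bfBNap} produces an integer $N > n$ (in fact any $N > n$ works), integers $m_1, m_2, m_C \in \mbn$, and some $k \leq k_0$ such that $\widetilde{A_k}, \widetilde{B_k}, \widetilde{C_k} \in \afThNpap$ and
\[
\sfh_{A,B,C} = \sff_{\widetilde{A_k},\widetilde{B_k},\widetilde{C_k}}.
\]

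The final step is to invoke Theorem \ref{positive affine sln}: since $\widetilde{A_k}, \widetilde{B_k}, \widetilde{C_k}$ all lie in $\afThNpap$, Lusztig's positivity theorem yields $\sff_{\widetilde{A_k},\widetilde{B_k},\widetilde{C_k}} \in \mbn[\up,\up^{-1}]$, and therefore $\sfh_{A,B,C} \in \mbn[\up,\up^{-1}]$. Summing over $C$ completes the proof. There is no real obstacle at this stage — all of the technical work (identifying $\dbfBn$ via $\dz$ with the system $\{\fb_A\}$ in Lemma \ref{dz(dotbfB)}, comparing products in $\afbfSr$ with products in $\afbfUslNp$ via $\iota_{n,N}$ and the shift maps $\eta_k$ in Theorem \ref{prop of canonical basis for affine q-Schur algebras}, and passing from affine Schur algebras back to the positive part via Lemma \ref{ke3}) has already been packaged into Theorem \ref{relation dbfBn bfBNap}, so the present theorem is merely its formal corollary.
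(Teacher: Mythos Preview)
Your proposal is correct and follows exactly the paper's approach: the paper's proof is the single sentence ``The assertion follows from Theorem \ref{positive affine sln} and Theorem \ref{relation dbfBn bfBNap},'' and you have simply spelled out the details of that implication.
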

\begin{proof}
The assertion follows from Theorem \ref{positive affine sln} and Theorem \ref{relation dbfBn bfBNap}.
\end{proof}
\section{A weak positivity property for $\ddbfHa$}
For $\la,\mu\in\afmbzn$ we set ${}_\la\dbfHa_\mu=\dbfHa/{}_\la I_\mu$, where
\begin{equation*}\label{laImu}
{}_\la I_\mu=\big(\sum_{\bfj\in\afmbzn}(\Kbfj-
 \up^{\la\cdot\bfj})\dbfHa+\sum_{\bfj\in\afmbzn}\dbfHa(\Kbfj
 -\up^{\mu\cdot\bfj})\big).
 \end{equation*}
 Let
$\ddbfHa:=\bop_{\la,\mu\in\afmbzn}{}_\la\dbfHa_\mu.$ As in the case of $\dbfU$, there is a natural associative $\mbq(\up)$-algebra structure on $\ddbfHa$ inherited from that of $\dbfHa$ (see \cite{Fu13}). The algebra $\ddbfHa$ is the modified form of $\dbfHa$.
Let $\{\th_A\mid A\in\aftiThn\}$ be the canonical basis of $\ddbfHa$ defined in \cite{DF14},
where $\aftiThn$ is given in \S 1.
\begin{Prop}[{\cite[7.7]{DF14}}]\label{dxr}
There is a surjective algebra homomorphism $\dxr:\ddbfHa\to\afbfSr$ such that
$$\dxr(\th_A)=\begin{cases}\th_{\Ar}, &\text{ if } A\in\afThnr;\\
0, &\text{ otherwise.}
\end{cases}$$
\end{Prop}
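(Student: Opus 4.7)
The plan is to first construct $\dxr$ by extending the surjection $\zr$ of Theorem \ref{zr} through the canonical quotient defining $\ddbfHa$, and then identify the images of the canonical basis elements $\th_A$ using their triangular characterization.

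First I would construct the underlying linear map. For $\bfj\in\afmbzn$ and $\mu\in\afLanr$, one checks directly from Theorem \ref{zr} that $[\diag(\mu)]\zr(K^{\bfj})=\zr(K^{\bfj})[\diag(\mu)]=\up^{\mu\cdot\bfj}[\diag(\mu)]$. Therefore the assignment sending $1_\la\cdot x\cdot 1_\mu$ to $[\diag(\la)]\zr(x)[\diag(\mu)]$ when both $\la,\mu\in\afLanr$ (and to zero otherwise) annihilates the defining ideal ${}_\la I_\mu$ of ${}_\la\dbfHa_\mu$, and hence descends to a well-defined linear map $\dxr:\ddbfHa\to\afbfSr$. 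Multiplicativity will reduce to the fact that $\zr$ is an algebra homomorphism together with the orthogonality $[\diag(\la)][\diag(\mu)]=\dt_{\la,\mu}[\diag(\la)]$ in $\afbfSr$, and surjectivity will follow from that of $\zr$, since for $A\in\afThnr$ the element $[A]$ is obtained by sandwiching any preimage of $e_A$ between $[\diag(\ro(A))]$ and $[\diag(\co(A))]$.

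Next I would identify the images of the canonical basis. For $A\in\afThnr$, $\th_{A,r}$ is characterized uniquely by bar-invariance together with the triangularity \eqref{th_Ar} with respect to $\sqsubset$. The canonical basis $\{\th_A\}$ of $\ddbfHa$ has a parallel characterization with respect to the natural monomial basis of $\ddbfHa$ (products of the form $1_\la\,\ti u_{A^+}^+\,\ti u_{A^-}^-$ associated with matrices in $\aftiThn$) and the induced partial ordering. The crucial point will be to show that $\dxr$ intertwines the bar involutions on $\ddbfHa$ and $\afbfSr$, and that the image of the defining monomial of $\th_A$ equals $[A]$ modulo strictly $\sqsubset$-smaller terms with coefficients in $\up^{-1}\mbz[\up^{-1}]$. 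Given this, uniqueness of $\th_{A,r}$ forces $\dxr(\th_A)=\th_{A,r}$.

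Finally I would handle $A\in\aftiThn\setminus\afThnr$. If $A\in\afThn$ with $\sg(A)\neq r$, then the defining monomial of $\th_A$ sits inside ${}_\la\dbfHa_\mu$ for weights $\la,\mu\notin\afLanr$, and by construction its image under $\dxr$ is zero; a triangular argument with respect to the same order then propagates the vanishing to $\th_A$ itself. If some diagonal entry of $A$ is negative, then the associated weight falls outside $\afmbnn$, hence outside $\afLanr$ for every $r$, and the same vanishing applies. The main obstacle will be step two: pinning down the monomial basis of $\ddbfHa$ and the compatibility of the bar involution with $\dxr$ precisely enough to compare triangular expansions across the two algebras. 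Once bar-compatibility and the leading-term computation are in place, the uniqueness characterization of $\th_{A,r}$ finishes the argument, and steps one and three are essentially formal.
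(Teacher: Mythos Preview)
The paper does not supply its own proof of this proposition: it is stated with attribution \cite[7.7]{DF14} and used as a black box. So there is no in-paper argument to compare against.

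That said, your outline is the natural one and matches how such results are established in \cite{DF14} and its predecessors. The construction of $\dxr$ from $\zr$ via the quotient defining $\ddbfHa$ is exactly right, and the identification $\dxr(\th_A)=\th_{A,r}$ does proceed by bar-compatibility plus triangularity and uniqueness, as you indicate. The point you flag as the main obstacle---showing that $\dxr$ commutes with the bar involutions and carries the monomial/PBW-type basis element attached to $A$ to $[A]$ plus $\sqsubset$-lower terms---is indeed where the work lies; in \cite{DF14} this is handled via the explicit formulas for $A^+(\mathbf{0},r)A^-(\mathbf{0},r)$ (as in \cite[3.7.3]{DDF}, used here in the proof of Theorem~\ref{injective}) together with a direct check that $\zr$ intertwines the bar maps. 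Your treatment of the vanishing for $A\notin\afThnr$ is also correct: once $\ro(A)$ or $\co(A)$ falls outside $\afLanr$ the idempotent kills the image, and the triangular expansion of $\th_A$ stays within the same row and column sums. One small notational point: the monomial you write as $1_\la\,\ti u_{A^+}^+\,\ti u_{A^-}^-$ should have the lower-triangular factor transposed, i.e.\ $\ti u_{{}^t\!(A^-)}^-$, to match the convention in this paper and in \cite{DDF,DF14}.
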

The maps $\dxr$ induce an algebra homomorphism
$$\dx:\ddbfHa\ra\prod_{r\geq 0}\afbfSr$$
such that $\dx(x)=(\dxr(x))_{r\geq 0}$ for $x\in\ddbfHa$.  Contrast to Theorem \ref{injective}, the map $\dx$ is not injective.
For $A\in\aftiThn$ let $\ol{\th_A}=\th_A+\ker(\dx)\in\ddbfHa/\ker(\dx)$.

\begin{Lem}\label{ol thA}
We have $\ol{\th_A}=0$ for $A\not\in\afThn$ and the set $\{\ol{\th_A}\mid A\in\afThn\}$ forms a $\mbq(\up)$-basis
for $\ddbfHa/\ker\dx$.
\end{Lem}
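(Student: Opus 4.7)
The plan is to read off both assertions directly from Proposition \ref{dxr}. The essential observation is that $A\in\aftiThn\setminus\afThn$ precisely when some diagonal entry of $A$ is strictly negative, and such a matrix cannot belong to any $\afThnr$ since $\afThnr\subseteq\afThn$. Therefore Proposition \ref{dxr} forces $\dxr(\th_A)=0$ for every $r\geq 0$, which gives $\th_A\in\ker\dx$ and hence $\ol{\th_A}=0$; this settles the first assertion.

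For the second assertion, since $\{\th_A\mid A\in\aftiThn\}$ is a $\mbq(\up)$-basis of $\ddbfHa$, the classes $\{\ol{\th_A}\mid A\in\aftiThn\}$ span $\ddbfHa/\ker\dx$, and by the first assertion this spanning set collapses to $\{\ol{\th_A}\mid A\in\afThn\}$. For linear independence, suppose
$$\sum_{A\in\afThn}c_A\ol{\th_A}=0$$
with only finitely many $c_A$ nonzero. Then $x:=\sum_{A\in\afThn}c_A\th_A\in\ker\dx$, so for each fixed $r\geq 0$ Proposition \ref{dxr} gives
$$0=\dxr(x)=\sum_{A\in\afThnr}c_A\th_{A,r}\quad\text{in }\afbfSr.$$
Because $\bfBr=\{\th_{A,r}\mid A\in\afThnr\}$ is the canonical basis of $\afbfSr$ by \eqref{bfBr}, it is $\sZ$-linearly independent, and we conclude $c_A=0$ for every $A\in\afThnr$. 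Since every $A\in\afThn$ lies in $\afThnr$ for the unique value $r=\sg(A)$, letting $r$ range over $\mbn$ yields $c_A=0$ for all $A\in\afThn$, as required.

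There is no real obstacle here: the argument is entirely formal, relying only on Proposition \ref{dxr}, the decomposition $\afThn=\bigcup_{r\geq 0}\afThnr$, and the linear independence of the canonical basis $\bfBr$ of each affine quantum Schur algebra. The only point that warrants a careful sentence in the write-up is the observation that $\aftiThn\setminus\afThn$ is exactly the set of matrices with some negative diagonal entry, so that Proposition \ref{dxr} applies uniformly to all such $A$ and to all $r$ simultaneously.
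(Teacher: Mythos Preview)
Your proof is correct and follows essentially the same approach as the paper. The paper's proof is a one-liner asserting that Proposition~\ref{dxr} yields $\ker\dx=\spann_{\mbq(\up)}\{\th_A\mid A\in\aftiThn,\,A\not\in\afThn\}$; your argument simply unpacks this identification by proving both inclusions explicitly (the containment $\supseteq$ via the first assertion, and $\subseteq$ via the linear-independence step using the basis $\bfBr$).
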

\begin{proof}
From Proposition \ref{dxr} we see that $\ker\dx=\spann_{\mbq(\up)}\{\th_A\mid A\in\aftiThn,\,A\not\in\afThn\}$. The assertion follows.
\end{proof}

The following result gives a weak version of the positivity property for $\ddbfHa$.
\begin{Thm}\label{positive modified affine gln}
For $A,B\in\afThn$ we have $\ol{\th_A}\cdot\ol{\th_B}\in\sum_{C\in\afThn}\mbn[\up,\up^{-1}]\ol{\th_C}$.
\end{Thm}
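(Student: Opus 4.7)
The plan is to exploit the algebra homomorphism $\dx:\ddbfHa\to\prod_{r\geq 0}\afbfSr$ introduced just before the statement. Since $\dx$ is an algebra map, it induces an injective algebra homomorphism $\bar\dx:\ddbfHa/\ker\dx\hookrightarrow\prod_{r\geq 0}\afbfSr$, and Lemma \ref{ol thA} identifies $\{\ol{\th_A}\mid A\in\afThn\}$ as a $\mbq(\up)$-basis of the quotient. The strategy is to transport the multiplication to the direct product, read off the structure constants componentwise, and then invoke the positivity of $\bfBr$ supplied by Corollary \ref{positive affine Schur}.

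By Proposition \ref{dxr}, $\dxr(\th_A)=\th_{A,r}$ when $r=\sg(A)$ and $0$ otherwise, so for $A\in\afThn$ the image $\bar\dx(\ol{\th_A})$ is the tuple supported at the single index $r=\sg(A)$ with entry $\th_{A,\sg(A)}$. Writing the product uniquely as $\ol{\th_A}\cdot\ol{\th_B}=\sum_{C\in\afThn}\sfh^{A,B}_C\ol{\th_C}$, applying $\bar\dx$ and comparing the $r$-th components yields the identity $\dxr(\th_A)\cdot\dxr(\th_B)=\sum_{C\in\afThn,\,\sg(C)=r}\sfh^{A,B}_C\th_{C,r}$ inside $\afbfSr$, for every $r\geq 0$.

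If $\sg(A)\neq\sg(B)$ the left-hand side vanishes for every $r$, and linear independence of the canonical basis $\bfBr$ forces $\sfh^{A,B}_C=0$ for all $C$. If instead $\sg(A)=\sg(B)=r_0$, the only nontrivial identity sits at $r=r_0$ and reads $\th_{A,r_0}\th_{B,r_0}=\sum_{C\in\Theta_\vtg(n,r_0)}\sfh^{A,B}_C\th_{C,r_0}$, while the equations at $r\neq r_0$ force $\sfh^{A,B}_C=0$ whenever $\sg(C)\neq r_0$. Comparing with \eqref{gABCr} identifies $\sfh^{A,B}_C=\g_{A,B,C,r_0}$ for every $C\in\Theta_\vtg(n,r_0)$, and Corollary \ref{positive affine Schur} then gives $\sfh^{A,B}_C\in\mbn[\up,\up^{-1}]$, which is exactly the asserted positivity. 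No serious obstacle is expected; the argument is essentially a bookkeeping exercise that transfers positivity through the injective graded map $\bar\dx$ using the grading by $\sg$, and the only care needed is to confirm that $\bar\dx$ is well defined on the quotient (automatic from the kernel construction) and that $\bfBr$ is linearly independent in each $r$ (built into \eqref{bfBr}).
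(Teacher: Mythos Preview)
Your proposal is correct and follows essentially the same route as the paper's own proof, which simply cites Corollary \ref{positive affine Schur}, Proposition \ref{dxr} and Lemma \ref{ol thA}; you have merely unpacked that one-line argument by making the injectivity of $\bar\dx$ and the $\sg$-grading explicit, and then matching the structure constants $\sfh^{A,B}_C$ with the $\g_{A,B,C,r_0}$ of \eqref{gABCr}.
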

\begin{proof}
The assertion follows from Corollary \ref{positive affine Schur}, Proposition \ref{dxr} and Lemma \ref{ol thA}.
\end{proof}

\end{document}